\newtheorem{theo}{Theorem}
\newtheorem{lemme}{Lemma}
\newtheorem{defi}{Definition}
\newtheorem{hyp}{Assumption}
\newtheorem{coro}{Corollary}
\numberwithin{lemme}{section}
\numberwithin{theo}{section}
\numberwithin{defi}{section}
\numberwithin{coro}{section}
\numberwithin{equation}{section}
\newtheorem{Rem}{Remark}
\newcommand{\RR}{\mathbb{R}}
\newcommand{\NN}{\mathbb{N}}
\newcommand{\Ss}{\mathbb{S}^{d-1}}
\newcommand{\vp}{\varphi}
\newcommand{\ep}{\varepsilon}
\newcommand{\Ep}{^{\ep}}
\newcommand{\xep}{\xi^{\varepsilon}}
\newcommand{\Oep}{\Omega_{\varepsilon}}
\newcommand{\Eep}{E^{\varepsilon}}
\newcommand{\Nep}{N^{\varepsilon}}
\newcommand{\Lxsp}{L_{\xi}(\sigma, \rho)}
\newcommand{\Ob}{\overline{\Omega}}
\newcommand{\Qb}{\overline{Q}}
\newcommand{\Lb}{\overline{L}}
\newcommand{\rhob}{\overline{\rho}}
\newcommand{\Mc}{\mathcal{M}}
\newcommand{\Pc}{\mathcal{P}}
\newcommand{\Lc}{\mathcal{L}}
\newcommand{\Pf}{\mathfrak{P}}
\newcommand{\Mfs}{\mathfrak{M}_{\sigma}}
\newcommand{\Gc}{\mathcal{G}}
\newcommand{\Qc}{\mathcal{Q}}
\newcommand{\Xc}{\mathcal{X}}
\newcommand{\Sc}{\mathcal{S}}
\newcommand{\Lct}{\widetilde{\mathcal{L}}}
\newcommand{\Qt}{\widetilde{Q}}
\newcommand{\Xt}{\widetilde{X}}
\newcommand{\Sct}{\widetilde{\Sc}}
\newcommand{\Ltxs}{\widetilde{L}_{\xi}(\sigma)}
\newcommand{\xig}{\boldsymbol{\xi}}
\newcommand{\nut}{\widetilde{\nu}}
\newcommand{\nutl}{\nu_t \otimes \lambda}
\newcommand{\ObOb}{\Ob \times \Ob}
\newcommand{\OS}{\Omega \times \Ss} 
\newcommand{\ObS}{\Ob \times \Ss} 
\newcommand{\IO}{\int_{\Omega}} 
\newcommand{\IOb}{\int_{\Ob}} 
\newcommand{\IOO}{\int_{\Omega \times \Omega}} 
\newcommand{\IObOb}{\int_{\ObOb}}
\newcommand{\IOS}{\int_{\Omega \times \Ss}} 
\newcommand{\IL}{\int_{\Lc}} 
\newcommand{\ISc}{\int_{\Sc}}
\newcommand{\cbx}{\overline{c}_{\xi}} 
\newcommand{\bal}{\begin{align*}}
\newcommand{\eal}{\end{align*}}
\newcommand{\ble}{\begin{lemme}}
\newcommand{\ele}{\end{lemme}}
\newcommand{\bpr}{\begin{proof}}
\newcommand{\epr}{\end{proof}}
\newcommand{\xxe}{\xi \left ( x, \frac{e}{|e|} \right )}
\newcommand{\beq}{\begin{equation}}
\newcommand{\eeq}{\end{equation}}
\newcommand{\rhonu}{\rho^{\nu}}
\begin{document}

\title{Wardrop equilibria : rigorous derivation of continuous limits from general networks models}

\author{Rom\'eo Hatchi \thanks{\scriptsize CEREMADE, UMR CNRS 7534, Universit\'e Paris IX
Dauphine, Pl. de Lattre de Tassigny, 75775 Paris Cedex 16, FRANCE
\texttt{hatchi@ceremade.dauphine.fr}.}}

\maketitle

\begin{abstract}
The concept of Wardrop equilibrium plays an important role in congested traffic problems since its introduction in the early $50$'s. As shown in \cite{baillon2012discrete}, when we work in two-dimensional cartesian and increasingly dense networks, passing to the limit by $\Gamma$-convergence, we obtain continuous minimization problems posed on measures on curves. Here we study the case of general networks in $\RR^d$ which become very dense. We use the notion of generalized curves and extend the results of the cartesian model.
\end{abstract}

\textbf{Keywords:} traffic congestion, Wardrop equilibrium, $\Gamma$-convergence, generalized curves.

\section{Introduction}

Modeling congested traffic is a field of research that has developed especially since the early $50$'s and the introduction of Wardrop equilibrium \cite{wardrop1952road}. Its popularity is due to many applications to road traffic and more recently to communication networks. In our finite networks model, we represent the congestion effects by the fact that the traveling time of each arc is a nondecreasing function of the flow on this arc. The concept of Wardrop equilibrium simply says that all used roads between two given points have the same cost and this cost is minimal. So we assume a rational behavior by users. A Wardrop equilibrium is a flow configuration that satisfies mass conservation conditions and positivity constraints. A few years after Wardrop defined his equilibrium notion, Beckmann, McGuire and Winsten \cite{beckmann1956studies} observed that Wardrop equilibrium can be formulated in terms of solutions of a convex optimization problem. However this variational characterization uses the whole path flow configuration. It becomes very costly when working in increasingly dense networks. We may often prefer to study the dual problem which is less untractable. But finding an optimal solution remains a hard problem because of the presence of a nonsmooth and nonlocal term.  As we study a sequence of discrete networks increasingly dense, it is natural to ask under what conditions we can pass to a continuous limit which would simplify the problem.

The purpose of this paper is to rigorously justify passing to the limit thanks to the theory of $\Gamma$-convergence and then to find a continuous analogue of Wardrop equilibrium. We will strongly rely on two articles \cite{carlier2008optimal} and \cite{baillon2012discrete}. The first establishes some first results on traffic congestion. The second studies the case of a two-dimensional cartesian grid with small arc length $\ep$. Here we will consider general networks in $\RR^d$ with small arc length of order $\ep$. It is a substantial improvement. We will show the $\Gamma$-convergence of the functionals in the dual problem as $\ep$ goes to $0$. We will obtain an optimization problem over metrics variables. The proof of the $\Gamma$-convergence is constructed in the same manner as in \cite{baillon2012discrete}. But two major difficulties here appear. Indeed in the case of the grid in $\RR^2$, there are only four possible constant directions (that are $((1,0),(0,1),(-1,0),(0,-1))$) so that for all speed $z \in \RR^2$, there exists an unique decomposition of $z$ in the family of these directions, with positive coefficients. In the general case, directions are not necessarily constant and we have no uniqueness of the decomposition. To understand how to overcome these obstacles, we can first look at the case of regular hexagonal networks. There are six constant directions (that are $exp(i(\pi/6+k \pi/3)), k=0, \dots, 5$) but we lose uniqueness. Then, we can study the case of a two-dimensional general network in which directions can vary and arcs lengths are not constant. The generalization from $\RR^2$ to $\RR^d$ (where $d$ is any integer $\geq 2$) is simpler. Of course, it is necessary to make some structural assumptions on the networks to have the $\Gamma$-convergence. These hypotheses are satisfied for instance in the cases of the isotropic model in \cite{carlier2008optimal} and the cartesian one in \cite{baillon2012discrete}. 

The limit problem (in the general case) is the dual of a continuous problem posed on a set of probability measures over generalized curves of the form $(\sigma, \rho)$ where $\sigma$ is a path and $\rho$ is a positive decomposition of $\dot{\sigma}$ in the family of the directions. This takes the anisotropy of the network into account. We will then remark that we can define a continuous Wardrop equilibrium through the optimality conditions for the continuous model. To establish that the limit problem, has solutions we work with a relaxation of this problem through the Young's measures and we extend results in \cite{carlier2008optimal}. Indeed we cannot directly apply Prokhorov's theorem to the set of generalized curves since weak-$L^1$ is not contained in a Polish space. First we are interested in the short-term problem, that is, we have a transport plan that gives the amount of mass sent from each source to each destination. We may then generalize these results to the long-term variant in which only the marginals (that are the distributions of supply and demand) are known. This case is interesting since as developed in \cite{brasco2010congested, brasco2013congested, hatchi2015wardrop}, it amounts to solve a degenerate elliptic PDE. But it will not be developed here. 

The plan of the paper is as follows: Section $2$ is devoted to a preliminary description of the discrete model with its notations, definition of Wardrop equilibrium and its variational characterization. In Section $3$, we explain the assumptions made and we identify the limit functional. We then state the $\Gamma$-convergence result. The proof is given in Section $4$. Then, in Section $5$, we formulate the optimality conditions  for the limit problem that lead to a continuous Wardrop equilibrium. Finally, in Section $6$, we adapt the previous results to the long-term problem.

\section{The discrete model}

\subsection{Notations and definition of Wardrop equilibria}

Let $d \in \NN, d \geq 2$ and $\Omega$ a bounded domain of $\RR^d$ with a smooth boundary and $\ep > 0$. We consider a sequence of discrete networks $\Oep = (N^{\ep}, E^{\ep})$ whose characteristic length is $\ep$, where $N^{\ep}$ is the set of nodes in $\Omega_{\ep}$ and $E^{\ep}$ the (finite) set of pairs $(x,e)$ with $x \in \Nep$ and $e \in \RR^d$ such that the segment $[x, x+e]$ is included in $\Omega$. We will simply identify arcs to pairs $(x,e)$. We impose $|E^{\ep}| = \max \{|e|, \text{ there exists }x \text{ such that }(x,e) \in E^{\ep} \} = \ep$. We may assume that two arcs can not cross. The orientation is important since the arcs $(x,e)$ and $(x+e, -e)$ really represent two distinct arcs. Now let us give some definitions and notations.

\textbf{Traveling times and congestion:} We denote the mass commuting on arc $(x,e)$ by $m^{\ep}(x,e)$ and the traveling time of arc $(x,e)$ by $t^{\ep}(x,e)$. We represent congestion by the following relation between traveling time and mass for every arc $(x,e)$:
\beq \label{2.1}
t^{\ep}(x,e)=g^{\ep}(x,e, m^{\ep}(x,e))
\eeq
where for every $\ep$, $g\Ep$ is a given positive function that depends on the arc itself but also on the mass $m^{\ep}(x,e)$ that commutes on the arc $(x,e)$ in a nondecreasing way: this is congestion. We will denote the set of all arc-masses $m\Ep(x,e)$ by $\mathbf{m\Ep}$. Orientation of networks here is essential: considering two neighboring nodes $x$ and $x'$ with $(x,x'-x)$ and $(x', x-x') \in \Eep$, the time to go from $x$ to $x'$ only depends on the mass $m\Ep(x, x'-x)$ that uses the arc $(x, x'-x)$ whereas the time to go from $x'$ to $x$ only depends on the mass $m\Ep(x', x-x')$.

\textbf{Transport plan:} A transport plan is a given function $\gamma^{\ep} : N^{\ep} \times N^{\ep} \mapsto \mathbb{R}_+$. That is a collection of nonnegative masses where for each pair $(x,y) \in N\Ep \times N\Ep$ (viewed as a source/destination pair), $\gamma^{\ep}(x,y)$ is the mass that has to be sent from the source $x$ to the target $y$. 

\textbf{Paths:} A path is a finite collection of successive nodes. We therefore represent a path $\sigma$ by writing $\sigma= \left (x_0, \dots, x_{L(\sigma)} \right )$ with $\sigma(k)=x_k \in N^{\ep}$ and $(\sigma(k), \sigma(k+1) - \sigma(k)) \in E^{\ep}$ for $k=0, \dots, L(\sigma)-1$. $\sigma(0)$ is the origin of $\sigma$ and $\sigma(L(\sigma))$ is the terminal point of $\sigma$. The length of $\sigma$ is 
\[
\sum_{k=0}^{L(\sigma)-1} | x_{k+1}-x_k|.
\]
We say that $(x,e) \subset \sigma$ if there exists $k \in \{1, \dots, L(\sigma)-1 \} $ such that $\sigma(k) = x$ and $e = \sigma(k+1) - \sigma(k)$. Since the time to travel on each arc is positive, we can impose $\sigma$ has no loop. We will denote the (finite) set of loop-free paths by $C\Ep$, that may be partitioned as
\[
C\Ep = \bigcup_{(x,y) \in N\Ep \times N\Ep} C_{x,y}\Ep,
\]
where $C_{x,y}\Ep$ is the set of loop-free paths starting from the origin $x$ and stopping at the terminal point $y$. The mass commuting on the path $\sigma \in C\Ep$ will be denoted $w\Ep(\sigma)$. The collection of all path-masses $w\Ep(\sigma)$ will be denoted $\mathbf{w\Ep}$. Given arc-masses $\mathbf{m\Ep}$, the travel time of a path $\sigma \in C\Ep$ is given by:  
\[
\tau_{\mathbf{m^{\ep}}}^{\ep}(\sigma)= \sum_{(x,e) \subset \sigma} g^{\ep} (x,e,m^{\ep}(x,e)).
\]

\textbf{Equilibria:} In short, in this model, the data are the masses $\gamma\Ep(x,y)$ and the congestion functions $g\Ep$. The unknowns are the arc-masses $m\Ep(x,e)$ and path-masses $w\Ep(\sigma)$. We wish to define some equilibrium requirements on these unknowns. First, they should be nonnegative. Moreover, we have the following conditions that relate arc-masses, path-masses and the data $\gamma\Ep$ :
\begin{equation} \label{cons1}
\gamma\Ep (x, y) := \sum_{\sigma \in C_{x, y}\Ep} w\Ep(\sigma), \: \forall (x, y) \in \Nep \times \Nep
\end{equation}
and
\begin{equation} \label{cons2}
m\Ep(x, e) = \sum_{\sigma \in C\Ep: (x,e) \subset \sigma} w\Ep(\sigma), \forall (x,e) \in \Eep. 
\end{equation}
Both express mass conservation. We finally require that only the shortest paths (taking into account the congestion created by arc and path-masses) should actually be used. This is the concept of Wardrop equilibrium that is defined precisely as follows:

\begin{defi} \label{defW}
A Wardrop equilibrium is a configuration of nonnegative arc-masses $\mathbf{m}\Ep : (x, e) \rightarrow (m\Ep(x,e))$ and of nonnegative path-masses $\mathbf{w}\Ep : \sigma \rightarrow w\Ep(\sigma)$, satisfying the mass conservation conditions \eqref{cons1} and \eqref{cons2} and such that for every $(x, y) \in  \Nep \times \Nep$ and every $\sigma \in C_{x,y}\Ep $, if $w\Ep(\sigma) > 0$ then 
\[
\tau_{\mathbf{m^{\ep}}}^{\ep}(\sigma) \leq \tau_{\mathbf{m^{\ep}}}^{\ep}(\sigma'), \forall \sigma' \in C_{x,y}\Ep.
\]
\end{defi}  

\subsection{Variational characterizations of equilibria}

Soon after the work of Wardrop, Beckmann, McGuire and Winsten \cite{beckmann1956studies} discovered that Wardrop equilibria can be obtained as minimizers of a convex optimization problem: 
\begin{theo} A flow configuration $(\mathbf{w}^{\ep}, \mathbf{m}^{\ep})$ is a Wardrop equilibrium if and only if it minimizes
\begin{equation} \label{P1}
\sum_{(x,e) \in E^{\ep}} G^{\ep} (x, e, m^{\ep}(x,e)) \text{ where } G^{\ep}(x, e, m) := \int_0^m g^{\ep}(x, e, \alpha) d \alpha 
\end{equation}
subject to nonnegativity constraints and the mass conservation conditions \eqref{cons1}-\eqref{cons2}. 
\end{theo}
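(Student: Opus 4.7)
The plan is to eliminate the arc-masses via the constraint \eqref{cons2}, turning the problem into a convex program in the path-masses $\mathbf{w}^\ep$ alone, and then to recognize the Wardrop condition as the KKT optimality system.

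First I would substitute \eqref{cons2} into the objective \eqref{P1} to obtain a function of $\mathbf{w}^\ep$ only:
\[
F^\ep(\mathbf{w}^\ep) := \sum_{(x,e) \in E^\ep} G^\ep\Bigl(x, e, \sum_{\sigma \in C^\ep : (x,e) \subset \sigma} w^\ep(\sigma)\Bigr).
\]
Since $g^\ep(x,e,\cdot)$ is nondecreasing, $G^\ep(x,e,\cdot)$ is convex, hence $F^\ep$ is convex in $\mathbf{w}^\ep$. The constraint set, defined by $w^\ep(\sigma) \geq 0$ and the linear equalities \eqref{cons1}, is a nonempty polyhedron (it is nonempty since one can always construct a feasible $\mathbf{w}^\ep$ for any $\gamma^\ep$ by spreading mass along at least one path in each $C^\ep_{x,y}$, using the fact that $\Omega$ is connected for small enough $\ep$). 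Hence a minimizer exists and, by convexity together with the linearity of the constraints, the first-order KKT conditions are both necessary and sufficient for optimality.

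Next I would write these KKT conditions. Differentiating $F^\ep$ in $w^\ep(\sigma)$ gives, using the definition of $G^\ep$,
\[
\frac{\partial F^\ep}{\partial w^\ep(\sigma)}(\mathbf{w}^\ep) = \sum_{(x,e) \subset \sigma} g^\ep(x,e, m^\ep(x,e)) = \tau^\ep_{\mathbf{m}^\ep}(\sigma).
\]
Introducing a multiplier $T^\ep(x,y)$ for each equality constraint \eqref{cons1} and nonnegative multipliers $\mu^\ep(\sigma)$ for the inequalities, the KKT system reads, for every $(x,y) \in \Nep \times \Nep$ and every $\sigma \in C^\ep_{x,y}$:
\[
\tau^\ep_{\mathbf{m}^\ep}(\sigma) = T^\ep(x,y) + \mu^\ep(\sigma), \qquad \mu^\ep(\sigma) \geq 0, \qquad \mu^\ep(\sigma) w^\ep(\sigma) = 0.
\]
Equivalently, $\tau^\ep_{\mathbf{m}^\ep}(\sigma) \geq T^\ep(x,y)$ for all $\sigma \in C^\ep_{x,y}$, with equality whenever $w^\ep(\sigma) > 0$. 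Taking the infimum over $\sigma \in C^\ep_{x,y}$ identifies $T^\ep(x,y)$ as the minimal travel time between $x$ and $y$, and the system rewrites exactly as: if $w^\ep(\sigma) > 0$ then $\tau^\ep_{\mathbf{m}^\ep}(\sigma) \leq \tau^\ep_{\mathbf{m}^\ep}(\sigma')$ for all $\sigma' \in C^\ep_{x,y}$, which is \ref{defW}.

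There is no real obstacle here beyond careful bookkeeping; the only subtle point is checking that the problem admits minimizers and that KKT applies. Existence follows from coercivity (which can be obtained from the nonnegativity of $g^\ep$ and the finiteness of $C^\ep$) together with continuity of $F^\ep$ on the nonempty polyhedral feasible set; constraint qualification is automatic because all constraints are affine. Thus the equivalence in both directions follows from the sufficiency and necessity of KKT for convex problems with linear constraints.
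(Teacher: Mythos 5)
Your proof is correct and is exactly the classical argument: the paper states this theorem without proof, citing Beckmann--McGuire--Winsten, and the proof behind that citation is precisely the KKT/convex-programming argument you give (eliminate $\mathbf{m}^{\ep}$ via \eqref{cons2}, note convexity and compactness of the polyhedral feasible set, and read the Wardrop condition off the multiplier system with complementary slackness). The only minor caveat is that differentiating $G^{\ep}(x,e,\cdot)$ uses continuity of $g^{\ep}(x,e,\cdot)$, which the paper only imposes later through its assumption on $g$; with $g^{\ep}$ merely nondecreasing the same argument goes through with one-sided derivatives or subgradients.
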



Since the functions $g\Ep$ are nondecreasing with respect to the last variable, the problem \eqref{P1} is convex so we can easily obtain existence results and numerical schemes. Unfortunately, this problem becomes quickly costly whenever the network is very dense, since it requires to enumerate all paths flows $w\Ep(\sigma)$. For this reason, we can not use this characterization to study realistic congested networks. An alternative consists in working with the dual formulation which is
\begin{equation} \label{D1}
 \inf_{t^{\ep} \in \mathbb{R}_+^{\#E^{\ep}}} \sum_{(x,e) \in E^{\ep}} H^{\ep} (x, e, t^{\ep} (x,e)) - \sum_{(x,y) \in {N^{\ep}}^2} \gamma^{\ep}(x,y) T_{\mathbf{t^{\ep}}}^{\ep}(x,y),
 \end{equation}
 where $\mathbf{t\Ep} \in \RR_+^{\#\Eep}$ should be understood as $\mathbf{t\Ep} = (t\Ep(x,e))_{(x,e) \in \Eep}$, $H^{\ep}(x,e, .) := (G\Ep(x,e,.))^* $ is the Legendre transform of $G^{\ep}(x,e, .)$ that is 
 \[
 H\Ep(x,e,t) := \sup_{m \geq 0} \{ mt - G\Ep(x,e,m) \}, \: \forall t \in \RR_+ 
 \]
 and $T\Ep_{\mathbf{t\Ep}}$ is the minimal length functional: 
\[
 T_{\mathbf{t^{\ep}}}^{\ep}(x,y)= \min_{\sigma \in C_{x,y}^{\ep}} \sum_{(x,e) \subset \sigma} t^{\ep} (x,e).
\]
The complexity of \eqref{D1} seems better since we only have $\# \Eep = O(\ep^{-d})$ nodes variables. However an important disadvantage appears in the dual formulation. The term $T_{\mathbf{t^{\ep}}}^{\ep}$ is nonsmooth, nonlocal and we might have difficulties to optimize that. Nevertheless we will see that we may pass to a continuous limit which will simplify the structure since we can then use the Hamilton-Jacobi theory. 
 
 \section{The $\Gamma$-convergence result}
 \subsection{Assumptions}
 
We obviously have to make some structural assumptions on the $\ep$-dependence of the networks and the data to be able to pass to a continuous limit in the Wardrop equilibrium problem. To understand all these assumptions, we will illustrate with some examples. Here, we will consider the cases of regular decomposition (cartesian, triangular and hexagonal, see \ref{hexag}) for $d=2$. In these models, all the arcs in $\Oep$ have the same length that is $\ep$. We will introduce some notations and to refer to a specific example, we will simply add the letters c (for the cartesian case), t (for the triangular one) and h (for the hexagonal one). 

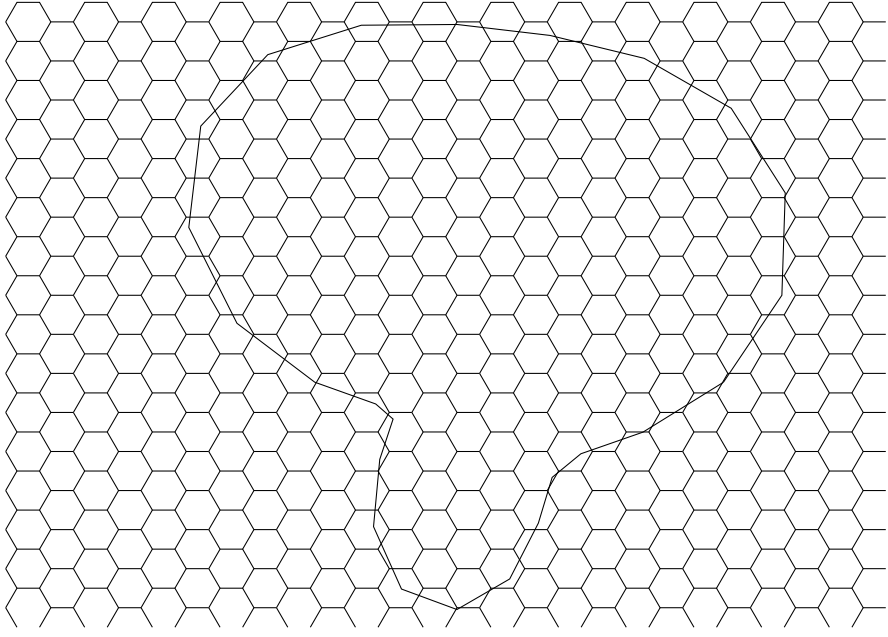
\begin{figure}
\begin{tikzpicture}
      \foreach \i in {0,3,...,36}{
          \foreach \j in {0,...,15}{
            \begin{scope}[xshift=\i*0.3 cm,yshift={\j*0.5196152 cm}]
               \draw (0:0) -- ++ (60:0.3cm)-- ++ (0:0.3cm)--++(-60:0.3cm)--++(0:0.3cm);
               \draw (0:0)--  ++ (-60:0.3cm) ++ (0:0.3cm) --++(60:0.3cm);
            \end{scope}
            }
         }
        \draw [domain=0:2] plot({6+1.4*cos(deg(\x*pi))*(2.36+0.38*cos(deg(\x*pi))+1.16*sin(deg(\x*pi))-0.42*cos(deg(2*\x*pi))+0.21*cos(deg(3*\x*pi))+0.54*sin(deg(3*\x*pi)))},{3+1.4*sin(deg(\x*pi))*(2.36+0.38*cos(deg(\x*pi))+1.16*sin(deg(\x*pi))-0.42*cos(deg(2*\x*pi))+0.21*cos(deg(3*\x*pi))+0.54*sin(deg(3*\x*pi)))});  
 \end{tikzpicture}
 \caption{An example of domain in $2d$-hexagonal model} 
 \label{hexag}
    \end{figure}

The first assumption concerns the length of the arcs in the networks.

\begin{hyp} \label{hyarc}
There exists a constant $C >0$ such that for every $\ep >0, (x,e) \in \Eep,$ we have $C \ep \leq |e| \leq \ep$. 
\end{hyp}

More generally we denote by $C$ a generic constant that does not depend on the scale parameter $\ep$. 

The following assumption is on the discrete network $\Oep$, $\ep > 0$. Roughly speaking, the arcs of $\Oep$ define a bounded polyhedron (still denoted by $\Oep$ by abuse of notations) that is an approximation of $\Ob$. $\Oep$ is the union of cells - or polytopes - $(V\Ep_i)$. Each $V\Ep_i$ is itself the union of subcells - or facets - $(F\Ep_{i,k})$. More precisely, we have:

 \begin{hyp} \label{hyre}
Up to a subsequence, $\Oep \subset \Omega_{\ep'}$ for $\ep > \ep' > 0$ and $\Ob = \bigcup_{\ep > 0} \Oep$. For $\ep > 0$, we have 
\[
\Oep = \bigcup_{i \in I\Ep} V\Ep_i, \: I\Ep \text{ is finite.}
\] 
There exists $S \in \NN$ and $s_i \leq S$ such that for every $\ep > 0$ and $i \in I\Ep$, $V\Ep_i = \text{Conv}(x_{i,1}\Ep, \dots, x_{i,s_i}\Ep)$ where for $j=1, \dots, s_i - 1$, $x\Ep_{i,j}$ is a neighbor of $x\Ep_{i,j+1}$ and $x\Ep_{i, s_i}$ is a neighbor of $x\Ep_{i,1}$ in $\Oep$. Its interior $\mathring{V}\Ep_i$ contains no arc $\in \Eep$. For $i \neq j, V\Ep_i \cap V\Ep_j = \emptyset$ or exactly a facet (of dimension $\leq d-1$). Let us denote $X\Ep_i$ the isobarycenter of all nodes $x\Ep_{i,k}$ contained in $V\Ep_i$. For $i \in I\Ep$, we have 
\[
V\Ep_i = \bigcup_{j \in I\Ep_i} F\Ep_{i,j} \text{ with } I\Ep_i \text{ finite}. 
\]
For $j \in I\Ep_i$, $F\Ep_{i,j} = \text{Conv} (X\Ep_i, x\Ep_{i,j_1}, \dots, x\Ep_{i, j_d})$ with these $(d+1)$ points that are affinely independent. For every $k \neq l$, $F\Ep_{i,k} \cap F\Ep_{i,l}$ is a facet of dimension $\leq d-1$, containing $X\Ep_i$. There exists a constant $C > 0$ independent of $\ep$ such that for every $i \in I\Ep$ and $j \in I\Ep_i$, the volume of $V\Ep_i$ and $F\Ep_{i,j}$ satisfy
\[
\frac{1}{C} \ep^d < |F\Ep_{i,j}| \leq |V\Ep_i| < C \ep^d.
\]

 \end{hyp}
 
 The estimate on the volumes implies another estimate: for every $i \in I\Ep$ and $k=1, \dots, s_i$, we have
 \beq \label{estiso}
 \frac{1}{C} \ep < \text{dist}(X\Ep_i, x\Ep_{i,k}) < C \ep
 \eeq
(the constant $C$ is not necessarily the same).  This hypothesis will in particular allow us to make possible a discretization of Morrey's theorem and then to prove \ref{dismor}. It is a non trivial extension of what is happening in dimension $2$. \ref{illhy2} shows an illustration of \ref{hyre} in the cartesian case. 

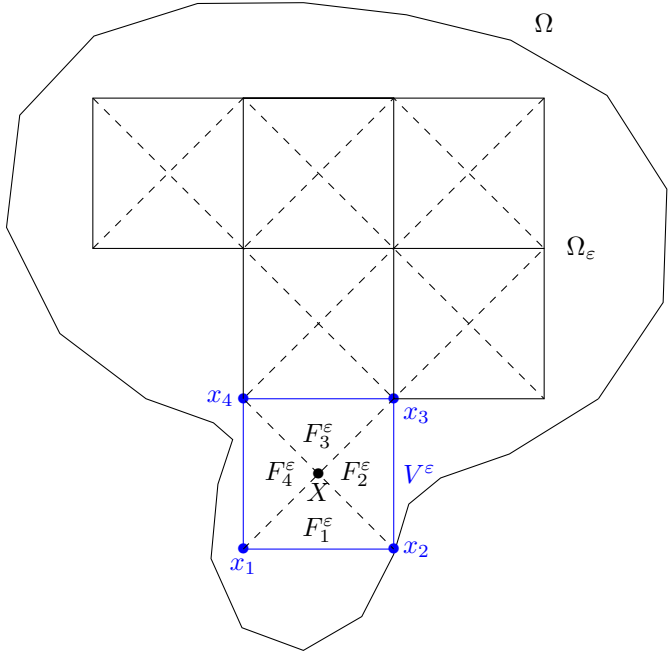
\begin{figure}[!ht]
\begin{tikzpicture}
        \draw [domain=0:2] plot({0.8+1.55*cos(deg(\x*pi))*(2.36+0.38*cos(deg(\x*pi))+1.16*sin(deg(\x*pi))-0.42*cos(deg(2*\x*pi))+0.21*cos(deg(3*\x*pi))+0.54*sin(deg(3*\x*pi)))},{2+1.55*sin(deg(\x*pi))*(2.36+0.38*cos(deg(\x*pi))+1.16*sin(deg(\x*pi))-0.42*cos(deg(2*\x*pi))+0.21*cos(deg(3*\x*pi))+0.54*sin(deg(3*\x*pi)))});  
        \draw [color = blue] (0,0) node[below] {$x_1$} node {$\bullet$} -- (2,0) node[right] {$x_2$} node{$\bullet$} -- (2,2) node[below right] {$x_3$} node {$\bullet$} -- (0,2) node[left] {$x_4$} node {$\bullet$} -- (0,0); 
       \draw [color = blue] (2,1) node[right] {$V\Ep$}; 
       \draw (1,0.25) node {$F\Ep_1$} ++ (0.5, 0.75) node {$F\Ep_2$} ++ (-1,0) node {$F\Ep_4$} ++ (0.5, 0.5) node {$F\Ep_3$};
        \draw (1,1) node[below, fill=white] {$X$} node {$\bullet$};
        \draw (0,2) -- (0,6) -- (2,6) -- (2,2) -- (4,2) -- (4,6) -- (-2,6) -- (-2,4) -- (4,4) ;
        \draw [dashed] (0,0) -- (4,4) -- (2,6) -- (0,4) ++ (-2,2) -- (2,2) ++ (-4,2) -- (0,6) -- (4,2) ++ (0,4) -- (0,2) -- (2,0);
        \draw (4,7) node { ${\displaystyle \Omega}$};
        \draw (4.5,4) node { ${\displaystyle \Oep}$};
\end{tikzpicture}
\caption{An illustration of \ref{hyre} in the cartesian case for $d=2$.}
\label{illhy2}
\end{figure}

  
We must also impose some technical assumptions on $N^{\ep}$ and $E^{\ep}$. 

\begin{hyp} \label{hy5} There exists $N \in \NN, D=\{v_k\}_{k=1,\dots,N} \in C^{0,\alpha}(\RR^d,\Ss)^N$ and $\{c_k\}_{k=1,\dots,N} \in C^1(\Ob, \RR_+^*)^N$ with $\alpha > d/p$ such that $E^{\ep}$ weakly converges in the sense that
\[
\lim_{\ep \rightarrow 0^+} \sum_{(x,e) \in E^{\ep}} |e|^d \varphi \left (x, \frac{e}{|e|} \right ) = \int_{\OS} \varphi(x,v) \: \theta(dx,dv), \forall \varphi \in C(\ObS),
\]
where $\theta \in \mathcal{M}_+(\Omega \times \Ss)$ is of the form 
\[ 
\theta(dx,dv)= \sum_{k=1}^N c_k(x) \delta_{v_k(x)} dx.
\]
\end{hyp}

The $v_k$'s are the possible directions in the continuous model. We have to keep in mind that for every $x \in \RR^d$, the $v_k(x)$ are not necessarily pairwise distinct. The requirement $v_k \in C^{0,\alpha}(\RR^d)$ with $\alpha > d/p$ is technical and will in particular be useful to prove \ref{lem3}. In our examples, the sets of directions are constant and we have
\begin{align*}
D_c & = \{v_{c_1}=(1,0),v_{c_2}=(0,1),v_{c_3}=(-1,0),v_{c_4}=(0,-1)\}, \\
D_t & = D_h =  \{v_{t_k} = e^{i \pi /6} \cdot e^{i \pi (k-1) /3}\}_{k=1,\dots, 6}. 
\end{align*}

The $c_k$'s are the volume coefficients. In our examples, they are constant and do not depend on $k$: 
\[
c_c = 1, c_t = \frac{2}{\sqrt{3}} \text{ and } c_h = \frac{2}{3 \sqrt{3}}.
\]
We notice that the $c_l$'s are different. Indeed, a square whose side length is $\ep$ does not have the same area as a hexagon whose side length is $\ep$. The next assumption imposes another condition on the directions $v_k$'s. 

 \begin{hyp} \label{hy4}
There exists a constant $C >0$ such that for every $(x,z, \xi) \in \RR^d \times \Ss \times \RR_+^N$, there exists $\bar{Z} \in \RR_+^N$ such that $|\bar{Z}| \leq C$ and
\beq \label{min}
\bar{Z} \cdot \xi = \min \{Z \cdot \xi; Z = (z_1, \dots, z_N) \in \RR_+^N \text{ and } \sum_{k=1}^N z_k v_k(x) = z \}.
\eeq
 \end{hyp}
     
This means that the family $\{v_k\}$ is positively generating and that for every $(x,z) \in \RR^d \times \in \RR^d$, a conical decomposition of $z$, not too large compared to $z$, is always possible in the family $D(x)$. In the cartesian case, for all $z \in \RR^2$, we have an unique conical decomposition of $z$ in $D_c$ while in the other examples, we always have the existence but not the uniqueness. The existence of a controlled minimizer allows us to keep some control over "the" decomposition of $z \in \RR^d$ in the family of directions. We now see a counterexample that looks like the cartesian case. We take $N=4, d=2, v_1 = (1,0), v_3 = (-1,0)$ and $v_4 = (0,1)$ (these directions are constant). We assume that there exists $x_0$ in $\Omega$ such that $v_2 (x) \rightarrow - v_4 = (0,1)$ as $x \rightarrow x_0$ and $v_{21}(x) > 0$ for $x \neq x_0$ where $v_2(x)=(v_{21}(x), v_{22}(x))$. Then for every $z=(z_1,z_2) \in \RR^2$ such that $z_1 >0$, we can write $z=\lambda_2(x)  v_2(x) + \lambda_4 (x) v_4(x)$ for $x$ close enough to $x_0$ with 
\[
\lambda_2(x)= \frac{z_1}{v_{21}(x)} \text{ and } \lambda_4(x) = z_1 \frac{v_{22}(x)}{v_{21}(x)} - z_2. 
\]
Then $\lambda_2(x)$ and $\lambda_4(x) \rightarrow +\infty$ as $x \rightarrow x_0$. For $\xi = (1,0,1,0)$, the value of \eqref{min} always is zero but the only decomposition that solves the problem is not controlled.  This example shows that the existence of a controlled decomposition does not imply that the minimal decomposition is controlled. However this assumption is still natural since we want to control the flow on each arc in order to minimize the transport cost.  

\begin{hyp} \label{hy6} 
Up to a subsequence, $\Eep$ may be partitioned as $\Eep = \bigsqcup_{k=1}^N \Eep_k$ such that for every $k=1,\dots,N,$ one has
\begin{equation} \label{H4}
\lim_{\ep \rightarrow 0^+} \sum_{(x,e) \in \Eep_k}  |e|^d \varphi \left (x, \frac{e}{|e|} \right ) = \IO c_k(x) \varphi(x, v_k(x)) \: dx, \: \forall \varphi \in C(\OS, \RR).
\end{equation}
and for every $(x,e) \in \Eep_k$, 
\begin{equation} \label{H5}
\left |\frac{e}{|e|} - v_k(x) \right | = O(1).
 \end{equation} 
 \end{hyp}
 
 This hypothesis is natural. Indeed, for every $x \in \Omega$ the condition $c_k(x) >0$ implies that for $\ep >0$ small enough, there exists an arc $(y,e)$ in $\Eep$ such that $y$ is near $x$ and $e/|e|$ close to $v_k(x)$. We will use it particularly to prove \ref{lem4}. 
The next assumption is more technical and will specifically serve to apply \ref{lem3}. 
 
 \begin{hyp} \label{hy8}
For $\ep > 0$, there exists $d$ (finite) sets of paths $C\Ep_1, \dots, C\Ep_d$ and $d$ linearly independent functions $e_1, \dots, e_d : \Omega \rightarrow \RR^d$ such that for every $x \in \Omega, i=1,\dots,d$, $e_i (x) = \sum_k \alpha_k^i c_k (x) v_k (x)$ where for $k=1, \dots, N$, $\alpha_k^i$ is constant and equal to $0$ or $1$ so that for $i=1,\dots,d$, we have 
\[
\bigcup_{ \sigma \in C\Ep_i} \{ (x,e) \subset \sigma \} = \{ (x,e) \in \Eep / \exists k \in \{ 1, \dots, N \}, \alpha_k^i =1\text{ and } (x,e) \in \Eep_k \}.
\]

For every $(\sigma, \sigma') \in C\Ep_i \times C\Ep_i$, if $\sigma \neq \sigma'$ then $\sigma \cap \sigma' = \emptyset$. We assume 
\[
\max_{\sigma = (y_0, \dots, y_{L(\sigma)}) \in C\Ep_i}(\text{dist}(y_0, \partial \Omega), \text{dist}(y_{L(\sigma)}, \partial \Omega)) \rightarrow 0 \text{ as } \ep \rightarrow 0.
\]
and
\[
| |y_{k+1} -y_k|^{d-1} - |y_k - y_{k-1}|^{d-1} | = O(\ep^d)
\]
for $\sigma = (y_0, \dots, y_{L(\sigma)}) \in C\Ep_i$ and $k= 2, \dots, L(\sigma) - 1$.
\end{hyp}

Formally speaking, it means that we partition points into a set of disjoint paths whose extremities tend to the boundary of $\Omega$ and such that we may do a change of variables for the derivates. For $\vp$ a regular function, $\nabla \vp$ then becomes $(\partial_{e_1} \vp, \dots, \partial_{e_d} \vp)$. Note that the condition on the modules is a sort of volume element in spherical coordinates, $dV$ being approximately $r^{d-1} dr$. In our examples, this requirement is trivial since arcs length is always equal to $\ep$ in $\Oep$. It will allow us to prove some statements on functions. More specifically, we will need to show that some functions have (Sobolev) regularity properties. For this purpose, it will be simpler to use these $e_i$. In the cartesian case ($d=2$), we can simply take $e_{c_1}= v_{c_1}$ and $e_{c_2}=v_{c_2}$ and in the triangular one, we can write $e_{t_1} = c_t v_{t_1}$ and $e_{t_2} = c_t v_{t_2}$. In the hexagonal case, we can for instance define $e_{h_1} = c_h ( v_{h_1} + v_{h_2})$ and $e_{h_2} = c_h (v_{h_2} + v_{h_3})$.  This last example is illustrated by \ref{illhy6} (with $\Omega$ being a circle). We can notice that some arcs are in paths $\sigma \in C\Ep_1$ and $\sigma' \in C\Ep_2$. 

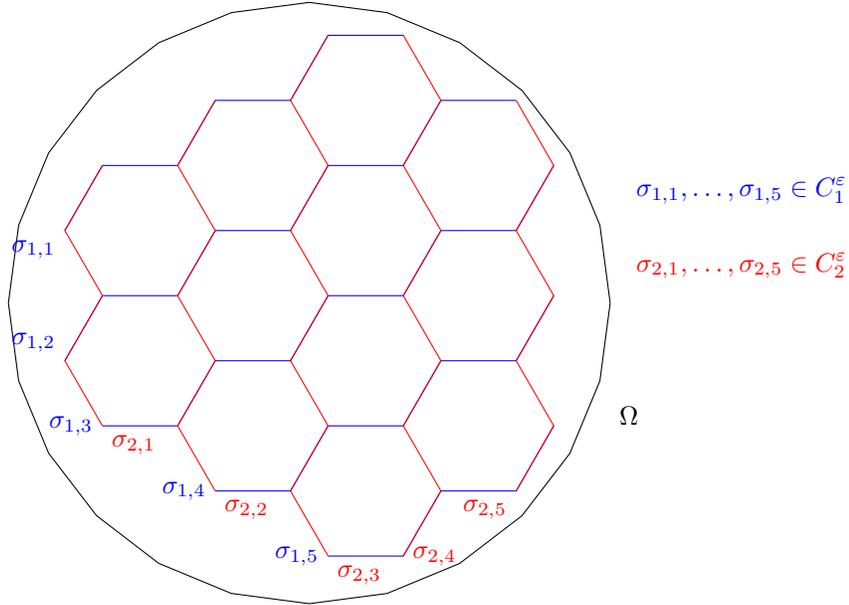
\begin{figure}
\begin{tikzpicture}
        \draw [domain=0:2] plot({3.25+4*cos(deg(\x*pi))},{2.5+4*sin(deg(\x*pi))});  
\draw[blue] (0,0) ++ (60:1cm) node[left] {$\sigma_{1,3}$} --++(0:1cm) --++ (60:1cm) --++(0:1cm) --++ (60:1cm) --++(0:1cm) --++ (60:1cm) --++(0:1cm) --++ (60:1cm);
\draw[blue] (0,0) ++ (60:1cm) ++ (120:1cm) node[above left] {$\sigma_{1,2}$} --++ (60:1cm) --++(0:1cm) --++ (60:1cm) --++(0:1cm) --++ (60:1cm) --++(0:1cm) --++ (60:1cm) --++(0:1cm);
\draw[blue] (0,0) ++ (60:1cm) ++ (120:1cm) ++ (60:1cm) ++ (120:1cm) node[below left] {$\sigma_{1,1}$} --++ (60:1cm) --++(0:1cm) --++ (60:1cm) --++(0:1cm) --++ (60:1cm) --++(0:1cm);
\draw[blue] (0,0) ++ (60:1cm) ++ (0:1cm) ++ (-60:1cm) node[left] {$\sigma_{1,4}$} --++ (0:1cm) --++ (60:1cm) --++(0:1cm) --++ (60:1cm) --++(0:1cm) --++ (60:1cm);
\draw[blue] (0,0) ++ (0:3cm) ++ (-60:1cm) node[left] {$\sigma_{1,5}$} --++ (0:1cm) --++ (60:1cm) --++ (0:1cm) --++ (60:1cm);
\draw[red] (0,0) ++ (60:1cm) node[below right] {$\sigma_{2,1}$} --++ (120:1cm) --++ (60:1cm) --++ (120:1cm) --++ (60:1cm);
\draw[red] (0,0) ++ (0:2cm) node[below right] {$\sigma_{2,2}$} --++ (120:1cm) --++ (60:1cm) --++ (120:1cm) --++ (60:1cm) --++ (120:1cm) --++ (60:1cm);
\draw[red] (0,0) ++ (0:3cm) ++ (-60:1cm) node[below right] {$\sigma_{2,3}$} --++ (120:1cm) --++ (60:1cm) --++ (120:1cm) --++ (60:1cm) --++ (120:1cm) --++ (60:1cm) --++ (120:1cm) --++ (60:1cm);
\draw[red] (0,0) ++ (0:3cm) ++ (-60:1cm) ++ (0:1cm) node[right] {$\sigma_{2,4}$} --++ (60:1cm) --++ (120:1cm) --++ (60:1cm) --++ (120:1cm) --++ (60:1cm) --++ (120:1cm) --++ (60:1cm) --++ (120:1cm);
\draw[red] (0,0) ++ (0:6cm) node[below left] {$\sigma_{2,5}$} --++ (60:1cm) --++ (120:1cm) --++ (60:1cm) --++ (120:1cm) --++ (60:1cm) --++ (120:1cm);
        \draw (7.5,1) node { ${\displaystyle \Omega}$};
        \draw[blue] (9,4) node {${\displaystyle \sigma_{1,1}, \dots, \sigma_{1,5} \in C\Ep_1}$};
        \draw[red] (9,3) node {${\displaystyle \sigma_{2,1}, \dots, \sigma_{2,5} \in C\Ep_2}$};
\end{tikzpicture}
\caption{An illustration of \ref{hy6} in the hexagonal case for $d=2$}
\label{illhy6}
\end{figure}

All these structural hypothesis on $\Omega\Ep$ are satisfied in our three classical examples. The cartesian one is the most obvious and the hexagonal one is a subcase of the triangular one. 
The following assumption is on the transport plan. 
 
 \begin{hyp} \label{hy1}
 $(\ep^{\frac{d}{2} - 1} \gamma^{\ep})_{\ep > 0}$ weakly star converges to a finite nonnegative measure $\gamma$ on $\overline{\Omega} \times \overline{\Omega}$ in the sense that the family of discrete measures \\ 
 $\sum_{(x,y) \in N\Ep \times N\Ep} \ep^{\frac{d}{2} - 1} \gamma^{\ep}(x,y) \delta_{(x,y)}$ weakly star converges to $\gamma$: 
\begin{equation} \label{H1}
 \lim_{\ep \rightarrow 0^+} \sum_{(x,y) \in {N^{\ep}}^2} \ep^{\frac{d}{2} - 1} \gamma^{\ep}(x,y) \varphi(x,y) = \int_{\overline{\Omega} \times \overline{\Omega}} \varphi \: d\gamma; \: \: \forall \varphi \in C(\overline{\Omega} \times \overline{\Omega}).
\end{equation}
\end{hyp} 

The next assumption focuses on the congestion functions $g\Ep$.
 \begin{hyp} \label{hy2}
 $g\Ep$ is of the form
 \begin{equation} \label{H2}
 g^{\ep}(x,e,m)= |e|^{d/2} g \left (x, \frac{e}{|e|}, \frac{m}{|e|^{d/2}} \right), \: \forall \ep >0, (x,e) \in \Eep, m \geq 0
 \end{equation}
 where $g : \OS \times \mathbb{R}_+ \mapsto \mathbb{R}$ is a given continuous, nonnegative function that is increasing in its last variable. 
 \end{hyp}
 
We then have
  \[
 G^{\ep}(x,e,m)= |e|^d G \left (x, \frac{e}{|e|}, \frac{m}{|e|^{d/2}} \right)  \text{ where } G(x, v, m) := \int_0^m g(x, v, \alpha) d \alpha 
 \]
 and
 \[
 H^{\ep}(x,e,t)= |e|^d H \left (x, \frac{e}{|e|}, \frac{t}{|e|^{d/2}} \right) \text{ where } H(x,v,\cdot) = (G(x,v, \cdot))^{\star}
 \]
 i.e. for every $\xi \in \RR_+$ : 
 \[
 H \left (x,\frac{e}{|e|},\xi \right ) = \sup_{m \in \RR_+} \left \{ m \xi - G \left (x,\frac{e}{|e|},m \right ) \right \}.
 \]
For every $(x,v) \in \ObS$, $H(x,v, \cdot)$ is actually strictly convex since $G(x,v, \cdot)$ is $C^1$ (thanks to \ref{hy2}). Note that in the case $d=2$, we have 
 \[
 g\Ep(x,e,m) = |e| g \left ( x,e, \frac{m}{|e|} \right ) \text{ for all } \ep >0, (x,e) \in \Eep, m \geq 0.
 \]
 It is natural and means that the traveling time on an arc of length $|e|$ is of order $|e|$ and depends on the flow per unit of length i.e. $m/|e|$. For the general case, we have extended this assumption. The exponent $d/2$ is not very natural, it does not represent a physical phenomenon but it allows us to obtain the same relation between $G$ and $H$, that means, $H(x,e,\cdot)$ is the Legendre transform of  $G(x,e,\cdot)$. Moreover, we may approach some integrals by sums. We can also note that there is actually no $\ep$-dependence on the $g\Ep$. 
 
 We also add assumptions on $H$: 
 \begin{hyp} \label{hy3} $H$ is continuous with respect to the first two arguments and there exists $p>d$ and two constants $0 < \lambda \leq \Lambda$ such that for every $(x,v,\xi) \in \ObS \times \RR_+$ one has
 \begin{equation} \label{H3}
 \lambda (\xi^p -1) \leq H(x,v, \xi) \leq \Lambda (\xi^p +1).
 \end{equation}
 \end{hyp}
 
 The $p$-growth is natural since we want to work in $L^p$ in the continuous limit. The condition $p >d$ has a technical reason, that will allow us to use  Morrey's inequality. That will be crucial to pass to the limit in the nonlocal term that contains $T\Ep_{\mathbf{t\Ep}}$.
 
\subsection{The limit functional}

In view of the previous paragraph and in particular \ref{hy2}, it is natural to rescale the arc-times $\mathbf{t\Ep}$ by defining new variables 
\beq \label{xep}
\xep(x,e)= \frac{t^{\ep}(x,e)}{|e|^{d/2}} \: \text{ for all } (x,e) \in E^{\ep},
\eeq
i.e. for every $(x,e) \in E^{\ep},$
\[
\xep(x,e) = \frac{g^{\ep}(x,e, m\Ep(x,e))}{|e|^{d/2}} = g \left (x, \frac{e}{|e|}, \frac{m\Ep(x,e)}{|e|^{d/2}} \right ).
\]
Then rewrite the formula $(\ref{D1})$ in terms of $\xep$ as:
 \begin{equation} \label{pd1}
 \inf_{\xep \in \mathbb{R}_+^{\#E^{\ep}}} J^{\ep}(\xep) := I_0^{\ep}(\xep) - I_1^{\ep}(\xep)
 \end{equation}

where
\begin{equation} \label{pd1.1}
I_0^{\ep}(\xep) := \sum_{(x,e) \in E^{\ep}} |e|^d H \left (x, \frac{e}{|e|}, \xep (x,e) \right )
\end{equation}
and
\begin{equation} \label{pd1.2}
I_1^{\ep}(\xep) := \sum_{(x,y) \in {N^{\ep}}^2} \gamma^{\ep}(x,y) \left ( \min_{\sigma \in C_{x,y}^{\ep}} \sum_{(z,e) \subset \sigma} |e|^{d/2} \xep(z,e) \right ) .
\end{equation} 

In view of \ref{hy3}, let us denote
\[
L_+^p (\theta) := \{ \xi \in L^p(\OS, \theta), \xi \geq 0\}.
\]
It is natural to introduce 
\beq \label{I0}
I_0(\xi) := \IOS H(x, v, \xi(x, v)) \theta(dx, dv), \: \forall \xi \in L_+^p (\theta),
\eeq
as the continuous limit of $I_0\Ep$. It is more involved to find the term that plays the same role as $I_1\Ep$ since we must define some integrals on paths. First rearrange the second term \eqref{pd1.2}, that is, as an integral. Let $\xep \in \RR_+^{\#\Eep}, (x,y) \in \Nep \times \Nep$ and $\sigma \in C_{x,y}^{\ep}$, $\sigma = (x_0, ..., x_{L(\sigma)})$. 
Let us extend $\sigma$ on $[0,L(\sigma)]$ in a piecewise affine function by defining  $\sigma(t)=\sigma(k) +(t-k) (\sigma(k+1) - \sigma(k))$ and $\xep$ in a piecewise constant function : $\xep(\sigma(t), \dot{\sigma}(t)) = \xep(\sigma(k), \sigma(k+1) - \sigma(k))$ for $t \in [k,k+1]$. For every $(x,e) \in \Eep$ we call $\Psi\Ep(x, e)$ the "canonical" decomposition of $ e$ on $D(x)$. More precisely, recalling \ref{hy6}, for $(x,e) \in \Eep$, there exists $k_{(x,e)} \in \{1, \dots, N\}$ such that $(x,e) \in \Eep_{k_{(x,e)}}$ and then we set 
\[
\Psi\Ep(x, e) = (0,\dots, \underset{k_{(x,e)}\text{th coordinate}}{|e|}, \dots, 0) \in \RR^N.
\]
For $\sigma \in C\Ep$ and $t \in [k,k+1[$, we write $\Psi\Ep(\sigma(t), \dot{\sigma}(t)) = \Psi\Ep(\sigma(k), \sigma(k+1) - \sigma(k))$. Let us also define a function $\boldsymbol{\xep}$ as follows: 
\[
\boldsymbol{\xep}(x) = \sum_{e / (x,e) \in \Eep} \frac{\xep(x,e)}{|e|} \Psi\Ep(x,e) \text{ for } x \in N\Ep.
\]
Let us extend $\boldsymbol{\xep}$ in a piecewise constant function on the arcs of $\Eep$: let $y \in \Omega$ such that there exists $(x,e) \in \Eep$ with $y \in (x,e)$, then we define 
\[
\boldsymbol{\xep}(y)= \sum_{(x,e) \in \Eep / y \in (x,e)} \frac{\xep(x,e)}{|e|} \Psi\Ep(x,e).
\]
This definition is consistent since the arcs that appear in the sum are in some different $\Eep_k$. By abuse of notations, we continue to write $\sigma, \xep$ and $\boldsymbol{\xep}$ for these new functions. Thus we have
\begin{align*}
\sum_{(x,e) \subset \sigma} |e| \xep(x,e) & = \sum_{k=0}^{L(\sigma)-1} | \sigma(k+1) - \sigma(k)| \xep(\sigma(k), \sigma(k+1) - \sigma(k)) \\
& = \sum_{k=0}^{L(\sigma)-1} \Psi\Ep(\sigma(k), \sigma(k+1) - \sigma(k)) \cdot \boldsymbol{\xep}(\sigma(k)) \\
& = \int_0^{L(\sigma)} \Psi\Ep(\sigma(t), \dot{\sigma}(t)) \cdot \boldsymbol{\xep}(\sigma(t)) dt.
\end{align*} 
We then get
\begin{align*}
 \min_{\sigma \in C_{x,y}^{\ep}} \sum_{(x,e) \subset \sigma} |e| \xep(x,e) &= \inf_{\sigma \in C_{x,y}^{\ep}} \int_0^{L(\sigma)} \Psi\Ep(\sigma(t), \dot{\sigma}(t)) \cdot \boldsymbol{\xep}(\sigma(t)) dt \\
& = \inf_{\sigma \in C_{x,y}^{\ep}} \int_0^1 \Psi\Ep(\tilde{\sigma}(t), \dot{\tilde{\sigma}}(t)) \cdot \boldsymbol{\xep}(\tilde{\sigma}(t)) dt
\end{align*} 
where $\tilde{\sigma} : [0,1] \rightarrow \Ob$ is the reparameterization of $\tilde{\sigma}(t) = \sigma(L(\sigma)t), t \in [0,1].$ For every $x \in \overline{\Omega}, z \in \RR^d$ let us define
\[
 A_x^z = \left \{ Z \in \mathbb{R}_+^N, Z=(z_1,...,z_N) / \sum_{k=1}^N z_k v_k(x) = z \right \}.
 \]
 Then for every $\xi \in C(\ObS, \mathbb{R}_+)$, define
 \begin{align*}
 c_{\xi}(x,y) & = \inf_{\sigma \in C_{x,y}} \left \{ \int_0^1 \inf_{X \in A_{\sigma(t)}^{\dot{\sigma}(t)}} \left ( \sum_{k=1}^N x_k \xi(\sigma(t), v_k(\sigma(t))) \right ) \: dt \right \} \\
 & = \inf_{\sigma \in C_{x,y}} \inf_{\rho \in \mathcal{P}_{\sigma}} \int_0^1 \left ( \sum_{k=1}^N  \xi(\sigma(t), v_k(\sigma(t))) \:  \rho_k(t) \right ) \: dt \\ 
 & =  \inf_{\sigma \in C_{x,y}} \inf_{\rho \in \mathcal{P}_{\sigma}} L_{\xi}(\sigma, \rho),
  \end{align*}
where
  \[ 
  \mathcal{P}_{\sigma} = \left \{ \rho : t \in [0,1] \rightarrow \rho(t) \in \RR_+^N / \dot{\sigma}(t) = \sum_{k=1}^N v_k(\sigma(t)) \: \rho_k (t) \text{ a.e. } t \right \},
  \]  
  \[
  L_{\xi}(\sigma, \rho) = \int_0^1 \left ( \sum_{k=1}^N  \xi(\sigma(t), v_k(\sigma(t))) \:  \rho_k(t) \right ) \: dt 
  \]
  and $C_{x, y}$ is the set of absolutely continuous curves $\sigma$ with values in $\Ob$ and such that $\sigma(0)=x$ and $\sigma(1) = y$. For every $x \in \Omega$ and $z \in \RR^d$, there is no a priori uniqueness of the decomposition of $z$ in the family $\{v_k(x) \}_k$ so that we have to take the infimum over all possible decompositions. The definition of $\Pc_{\sigma}$ takes in account this constraint. For every $\rho \in \Pc_{\sigma}$ and $t \in [0,1]$ the terms $\rho_k(t)$ are the weights of $\dot{\sigma}(t)$ in the family $\{v_k(\sigma(t) \}$. $(\sigma, \rho)$ is a sort of generalized curve. It will allow us to distinguish between different limiting behaviors. A simple example is the following approximations, $\ep > 0$ : 
   
\begin{tikzpicture}
\draw [very thick, ->] (0,0) -- (4,0);
\draw [very thick, ->] (7,0) --++ (0.2, 0.5) --++ (0.4,-1) --++ (0.4,1) --++ (0.4,-1) --++ (0.4,1) --++ (0.4,-1) --++ (0.4,1) --++ (0.4,-1) --++ (0.4,1) --++ (0.4,-1) --++ (0.4,1) ;
\draw [dashed] (6.8,0.5) --++ (4.6,0);
\draw [dashed] (6.8,-0.5) --++ (4.6,0);
\draw [<->] (6.6,0.5) --++ (0,-1);
\draw (6.6,0) node [left] {${\displaystyle \ep}$};
\end{tikzpicture}

It is the same curve $\sigma$ in both case but the $\rho$ are different. Indeed, in the left case, we have only one direction that is $(1,0)$, it is a line. In the right one, we have two directions that tend to $(0,1)$ and $(0,-1)$ as $\ep$ tends to $0$, these are oscillations.

  Now, our aim is to extend the definition of $c_{\xi}$ to the case where $\xi$ is only $L_+^p(\theta)$. We will strongly generalize the method used in \cite{carlier2008optimal} to the case of generalized curves. First, let us notice that we may write $c_{\xi}$ in another form: 
  \[
c_{\xi}(x,y) = \inf_{\sigma \in C_{x,y}} \tilde{L}_{\xi}(\sigma) \: \text{ for } \xi \in C(\ObS, \mathbb{R}_+),
\]
where 
\beq \label{Ltxs}
\Ltxs = \int_0^1 \Phi_{\xi} (\sigma(t), \dot{\sigma}(t)) \: dt
\eeq  
with for all $x \in \Ob$ and $y \in \RR^d$, $\Phi_{\xi}(x,y)$ being defined as follows: 
\begin{align*}
\Phi_{\xi}(x,y) & = \inf_{Y \in \RR_+^N} \left \{ \sum_{k=1}^N y_k \xi(x,v_k(x)) : Y = (y_1, \cdots, y_N) \in A_x^y \right \} \\
& = \inf_{Y \in A_x^y} Y \cdot \xig(x), 
\end{align*}
where $\xig(x) = (\xi(x, v_1(x)), \dots,\xi(x, v_N(x)))$.  
 
  The next lemma shows that $\Phi_{\xi}$ defines a sort of Finsler metric. It is an anisotropic model but $\Phi_{\xi}$ is not even and so $c_{\xi}$ is not symmetric. Moreover $c_{\xi}$ is not necessarily positive between two different points so that $c_{\xi}$ is not a distance. However $\Phi_{\xi}(x,\cdot)$ looks like a norm that depends on the point $x \in \Ob$. Its unit ball is a polyhedron in $\RR^d$ that changes with $x$. Formally speaking, the minimizing element $Y=(y_1, \dots, y_N)$ represents the coefficients for the "Finsler distance" $c_{\xi}$. 
  
\begin{lemme} \label{lempre} 
Let $\xi \in C(\ObS, \mathbb{R}_+)$. Then one has: 
\begin{enumerate}
\item The function $\Phi_{\xi}$ is a minimum and continuous. 
\item For all $x \in \Ob$, $\Phi_{\xi} (x, \cdot)$ is homogeneous of degree $1$ and convex. 
\item If $\{ \xi_n \}$ is a sequence in $C(\ObS, \mathbb{R}_+)$ and $x \in \Ob$ such that $\{ \boldsymbol{\xi_n}(x) \}$ converges to $\boldsymbol{\xi}(x)$ then $\{ \Phi_{\xi_n} (x,y) \}$ converges to $\Phi_{\xi}(x,y)$ for all $y \in \RR^d$.
\end{enumerate}
\end{lemme}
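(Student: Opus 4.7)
The plan is to tackle the three items in sequence, with item~(1) being the main work and items~(2) and~(3) following quickly from the same tools. The workhorse throughout will be \ref{hy4}, which supplies, uniformly in $(x,z)$, a positive decomposition of $z$ in $\{v_k(x)\}_k$ of controlled norm. To see that $\Phi_\xi$ is attained, I will use that for $y=0$ the choice $Y=0$ works, and for $y\neq 0$ I will apply \ref{hy4} to $(x,y/|y|,\xig(x))$ to produce $\bar Z \in \RR_+^N$ with $|\bar Z|\leq C$ minimizing over $A_x^{y/|y|}$; by positive homogeneity (item~(2), below), the vector $|y|\bar Z$ lies in $A_x^y$ and realizes $\Phi_\xi(x,y)$, so the infimum is a minimum, attained by an element of norm at most $C|y|$.

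For the continuity, I will fix $(x_n,y_n)\to(x_0,y_0)$ and prove a liminf and a limsup inequality. For the liminf, I will pick minimizers $Y_n \in A_{x_n}^{y_n}$ with $|Y_n|\leq C|y_n|$; extracting a convergent subsequence $Y_{n_k}\to Y^*$, continuity of the $v_k$'s yields $Y^* \in A_{x_0}^{y_0}$, and joint continuity of $\xi$ on $\ObS$ together with that of the $v_k$'s gives $\xig(x_{n_k}) \to \xig(x_0)$, so that
\[
\Phi_\xi(x_0,y_0) \leq Y^*\cdot \xig(x_0) = \lim_k Y_{n_k}\cdot \xig(x_{n_k}) = \liminf_n \Phi_\xi(x_n,y_n).
\]
For the limsup, I will take a minimizer $Y$ of $\Phi_\xi(x_0,y_0)$ and set $\delta_n := y_n - \sum_k Y_k v_k(x_n)$, which tends to $0$ by continuity of the $v_k$; when $\delta_n\neq 0$, applying \ref{hy4} to $(x_n,\delta_n/|\delta_n|,\cdot)$ and rescaling will furnish $d_n \in \RR_+^N$ with $|d_n|\leq C|\delta_n|\to 0$ and $\sum_k (d_n)_k v_k(x_n)=\delta_n$, so that $Y+d_n \in A_{x_n}^{y_n}$ converges to $Y$ and $\Phi_\xi(x_n,y_n) \leq (Y+d_n)\cdot \xig(x_n) \to Y\cdot \xig(x_0) = \Phi_\xi(x_0,y_0)$.

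Item~(2) is immediate from the bijection $Y\mapsto \lambda Y$ between $A_x^y$ and $A_x^{\lambda y}$ for $\lambda\geq 0$ (homogeneity) and from $\lambda Y_1+(1-\lambda)Y_2 \in A_x^{\lambda y_1+(1-\lambda)y_2}$ whenever $Y_i \in A_x^{y_i}$ (convexity). For item~(3), the set $A_x^y$ does not depend on $n$, so no surgery of decompositions is needed: I will choose minimizers $Y_n$ of $\Phi_{\xi_n}(x,y)$ with $|Y_n|\leq C|y|$, extract a convergent subsequence to some $Y^* \in A_x^y$, and use $\boldsymbol{\xi_n}(x)\to \xig(x)$ for the liminf, while testing any fixed minimizer of $\Phi_\xi(x,y)$ against $\boldsymbol{\xi_n}(x)$ supplies the matching limsup. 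The hard part will be the upper semicontinuity in item~(1): the decomposition $Y$ at $x_0$ does not automatically belong to $A_{x_n}^{y_n}$, and the small positive correction $d_n$ exists only because \ref{hy4} provides a uniformly bounded positive decomposition at every base point and in every direction.
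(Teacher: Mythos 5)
Your proof is correct; the paper itself gives no argument for this lemma (it is explicitly "left to the reader"), and your route — attainment and the uniform bound $|Y|\leq C|y|$ from \ref{hy4} after normalizing to a unit direction, compactness of bounded minimizing decompositions for the lower bound, and the small positive correction $d_n$ built again from \ref{hy4} to transplant a minimizer at $(x_0,y_0)$ into $A_{x_n}^{y_n}$ for the upper bound — is precisely the natural argument the assumptions are designed for. The only cosmetic point is to first pass to a subsequence realizing the liminf before extracting the convergent minimizers, and to note separately that $\Phi_{\xi}(x,0)=0$ so that degree-one homogeneity also holds at $\lambda=0$.
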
 

The proof is left to the reader.

Due to \ref{lempre}, we may reformulate \eqref{Ltxs} :
\[
\Ltxs = \int_0^1 \Phi_{\xi} (\sigma(t), \dot{\sigma}(t)) \: dt = \int_0^1 |\dot{\sigma}(t)| \Phi_{\xi} \left (\sigma(t), \frac{\dot{\sigma}(t)}{|\dot{\sigma}(t)|} \right) dt.
\]
 The following lemma gives a H\"older estimate for $c_{\xi}$ and extends Proposition $3.2$ in \cite{carlier2008optimal}.
\begin{lemme} \label{prop3.2}
There exists a nonnegative constant $C$ such that for every $\xi \in C(\ObS, \RR_+)$ and every $(x_1, x_2, y_1, y_2) \in \Omega^4$, one has
\begin{equation} \label{eq3.2} 
| c_{\xi} (x_1,y_1) - c_{\xi}(x_2,y_2) | \leq C \| \xi \|_{L^p(\theta)} (|x_1 - x_2|^{\beta} + |y_1 - y_2|^{\beta}),
\end{equation}
where $\beta = 1 - d/p$.
So if $(\xi_n)_n \in C(\ObS, \RR_+)^{\NN}$ is bounded in $L^p(\theta)$, then $(c_{\xi_n})_n$ admits a subsequence that converges in $C(\ObOb, \RR_+)$.
\end{lemme}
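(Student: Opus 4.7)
The plan is to deduce \eqref{eq3.2} in two main steps: first a direct bound of the form $c_\xi(a, b) \leq C |a - b|^\beta \|\xi\|_{L^p(\theta)}$ for all $a, b \in \Ob$, then the usual triangle-type argument to convert this into a bound on the difference. The starting point is a controlled pointwise estimate on $\Phi_\xi$. Applying \ref{hy4} at $x$ with direction $y/|y| \in \Ss$ and weight vector $\xig(x) \in \RR_+^N$ yields a decomposition $\bar Z \in \RR_+^N$ with $|\bar Z| \leq C$ and $\Phi_\xi(x, y/|y|) = \bar Z \cdot \xig(x) \leq C |\xig(x)|$; by the homogeneity in \ref{lempre} this gives
\[
\Phi_\xi(x, y) \leq C |y|\, |\xig(x)|, \qquad (x, y) \in \Ob \times \RR^d.
\]
Since each $c_k \in C^1(\Ob, \RR_+^*)$ is bounded below on $\Ob$ by some $c_\star > 0$, one also has $\|\xig\|_{L^p(\Omega)} \leq C \|\xi\|_{L^p(\theta)}$.

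For the key estimate on $c_\xi(a, b)$, I would use an averaging argument over a family of broken-line paths. Set $r := |a - b|$, $m := (a + b)/2$, and for $z \in B := B(m, r/4)$ let $\gamma_z : [0, 1] \to \RR^d$ be the piecewise affine curve going linearly from $a$ to $z$ on $[0, 1/2]$ and from $z$ to $b$ on $[1/2, 1]$. Since $|\dot\gamma_z| \leq 2r$, the pointwise bound above yields
\[
c_\xi(a, b) \leq \Ltx(\gamma_z) \leq Cr \int_0^1 \bigl[ |\xig(a + s(z - a))| + |\xig(z + s(b - z))| \bigr] ds.
\]
Averaging over $z \in B$ leaves the left side unchanged; for the first piece, the change of variables $x = a + s(z - a)$ (with $dz = s^{-d} dx$) sends $B$ into the ball $B(a + s(m - a), sr/4)$ of volume $\sim (sr)^d$, and H\"older's inequality on this ball then gives
\[
\frac{1}{|B|} \int_B \int_0^1 |\xig(a + s(z - a))|\, ds\, dz \;\leq\; \frac{C \|\xig\|_{L^p}}{r^d} \int_0^1 s^{-d} (sr)^{d(1 - 1/p)}\, ds \;=\; C r^{-d/p}\,\|\xig\|_{L^p},
\]
the remaining $s$-integral $\int_0^1 s^{-d/p}\, ds$ converging precisely because $p > d$. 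The second piece is treated identically. Multiplying back by $Cr$ and using $\|\xig\|_{L^p} \leq C \|\xi\|_{L^p(\theta)}$ gives $c_\xi(a, b) \leq C |a - b|^\beta \|\xi\|_{L^p(\theta)}$.

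To conclude \eqref{eq3.2}, I would use the subadditivity $c_\xi(x_2, y_2) \leq c_\xi(x_2, x_1) + c_\xi(x_1, y_1) + c_\xi(y_1, y_2)$, which is immediate on concatenating admissible curves between consecutive points (and the corresponding $\rho$'s) and then taking successive infima in the definition of $c_\xi$. Applying the previous estimate to the two outer terms and symmetrizing in $(x_1, y_1) \leftrightarrow (x_2, y_2)$ yields the claim. For the compactness statement, a sequence $(\xi_n)$ bounded in $L^p(\theta)$ produces an equibounded (by $C(\mathrm{diam}\,\Ob)^\beta$) and equicontinuous family $(c_{\xi_n})$ on the compact $\Ob \times \Ob$, and Arzel\`a--Ascoli delivers a uniformly convergent subsequence. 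The main obstacle is the admissibility issue in the averaging step: the broken lines $\gamma_z$ and the averaging ball $B$ must sit inside $\Ob$, which is not automatic when $a, b$ are close to $\partial \Omega$; the smoothness of $\partial \Omega$ then has to be invoked to locally flatten the boundary and to ensure that a quasi-geodesic of length $\lesssim |a - b|$ joining $a, b$ inside $\Ob$ exists, around which the averaging can be carried out.
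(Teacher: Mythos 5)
Your proof is correct and is essentially the argument the paper has in mind: the paper gives no proof of this lemma, presenting it as an extension of Proposition $3.2$ in \cite{carlier2008optimal}, and your averaging over a pencil of broken lines combined with H\"older's inequality (convergent precisely because $p>d$) is that proof adapted to the present anisotropic setting, the two added ingredients --- the controlled decomposition of \ref{hy4} giving $\Phi_{\xi}(x,u)\leq C|u|\,|\xig(x)|$ and the uniform positivity of the $c_k$'s giving $\|\xig\|_{L^p(dx)}\leq C\|\xi\|_{L^p(\theta)}$ --- being exactly the adaptation required, after which subadditivity of $c_{\xi}$ and Arzel\`a--Ascoli finish as you say. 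The near-boundary admissibility of the pencil, which you flag, is indeed the only point left as a sketch; it is handled as you indicate by the smoothness of $\partial\Omega$ (uniform bi-Lipschitz flattening, hence comparability of geodesic and Euclidean distances in $\Ob$), and it affects neither the exponent $\beta=1-d/p$ nor the uniformity of the constant $C$.
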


 For every $\xi \in L_+^p (\theta)$, let us define
\beq \label{cbx}
\cbx(x,y) = \sup \{c(x,y) : c \in \mathcal{A} (\xi) \}, \; \forall (x,y) \in \overline{\Omega} \times \overline{\Omega} 
\eeq

where
\[
 \mathcal{A}(\xi) = \{ \lim_n c_{\xi_n} \text{ in } C(\overline{\Omega} \times \overline{\Omega}) : (\xi_n)_n \in C(\overline{\Omega} \times \Ss, \mathbb{R}_+)^{\NN}, \xi_n \rightarrow \xi \text{ in } L^p (\theta) \}.
 \]

We will justify in the next section that for every $\xi \in L_+^p (\theta)$, the continuous limit functional is
\begin{equation} \label{pc1}
J(\xi) := I_0(\xi) - I_1(\xi) = \IOS H(x, v, \xi(x, v)) \theta(dx, dv) - \int_{\overline{\Omega} \times \overline{\Omega}} \cbx d\gamma.
\end{equation}

The two following lemmas are generalizations of Lemmas $3.4$ and $3.5$ in \cite{carlier2008optimal}:

\begin{lemme} \label{convcxin}
If $\xi \in L_+^p (\theta)$ then there exists a sequence $(\xi_n)_n$ in $C(\ObS, \RR_+)$ such that $c_{\xi_n}$ converges to $\cbx$ in $C(\ObOb)$ as $n \rightarrow \infty$.  
\end{lemme}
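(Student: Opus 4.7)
The strategy is to prove the stronger statement $\cbx \in \Ac(\xi)$, from which the announced sequence follows immediately by the very definition of $\Ac(\xi)$.

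\emph{Step 1 -- structural properties of $\Ac(\xi)$.} I first observe that $\Ac(\xi)$ is a compact subset of $C(\ObOb)$. Indeed, by \ref{prop3.2} applied to $c_{\xi_n}$ and passage to the limit along any $\xi_n \to \xi$ in $L^p(\theta)$, each $c \in \Ac(\xi)$ inherits the H\"older estimate with constant $C\|\xi\|_{L^p(\theta)}$; together with $c(x,x)=0$ (take $\sigma \equiv x$ and $\rho \equiv 0$) this yields a uniform bound and a common modulus of continuity, so Arzel\`a--Ascoli gives relative compactness. A diagonal extraction shows $\Ac(\xi)$ is also closed in $C(\ObOb)$: if $c_m \to c$ uniformly with $c_m = \lim_n c_{\xi^m_n}$, I choose $n_m$ with $\|\xi^m_{n_m}-\xi\|_{L^p(\theta)} + \|c_{\xi^m_{n_m}}-c_m\|_\infty < 1/m$; then $c = \lim_m c_{\xi^m_{n_m}} \in \Ac(\xi)$. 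Compactness forces separability, so I fix a countable dense family $(c_k)_{k\geq 1} \subset \Ac(\xi)$; density combined with $c_k \leq \cbx$ yields $\cbx = \sup_k c_k$ pointwise, and equicontinuity of $(c_k)$ transmits continuity to $\cbx$.

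\emph{Step 2 -- directedness.} Since $\rho \geq 0$ and $\xi \mapsto L_\xi(\sigma,\rho)$ is linear nondecreasing, $c_\xi$ is monotone in $\xi$. Given $c^1, c^2 \in \Ac(\xi)$ approached by $(\xi^1_n),(\xi^2_n) \subset C(\ObS,\RR_+)$, I set $\eta_n := \max(\xi^1_n,\xi^2_n) \in C(\ObS,\RR_+)$. The pointwise bound $|\max(a,b)-c| \leq |a-c|+|b-c|$ gives $\eta_n \to \xi$ in $L^p(\theta)$, and monotonicity yields $c_{\eta_n} \geq \max(c_{\xi^1_n},c_{\xi^2_n})$. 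By the Arzel\`a--Ascoli reasoning of Step~1, a subsequence of $(c_{\eta_n})$ converges uniformly to some $c^\ast \in \Ac(\xi)$ with $c^\ast \geq \max(c^1,c^2)$. This is the step I expect to be the trickiest: max does not commute with the nonlinear operator $\xi \mapsto c_\xi$, but monotonicity combined with Arzel\`a--Ascoli compensates by producing an upper bound inside $\Ac(\xi)$, even though one only controls $c_{\eta_n}$ from below.

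\emph{Step 3 -- conclusion.} Iterating Step~2 on the countable dense family $(c_k)$, I build inductively an increasing sequence $(c^{(N)})_N \subset \Ac(\xi)$ with $c^{(N)} \geq c_k$ for all $k \leq N$. This sequence is pointwise monotone, equicontinuous, and bounded above by $\cbx$; its pointwise limit $c^{(\infty)}$ is continuous (sup of an equicontinuous family), so Dini's theorem on the compact set $\ObOb$ upgrades the convergence to uniform, and closedness of $\Ac(\xi)$ gives $c^{(\infty)} \in \Ac(\xi)$. Since $c^{(\infty)} \geq c_k$ for every $k$ and $(c_k)$ is dense in $\Ac(\xi)$, I deduce $c^{(\infty)} \geq c$ for all $c \in \Ac(\xi)$, hence $c^{(\infty)} \geq \cbx$; the reverse inequality holds since $c^{(\infty)} \in \Ac(\xi)$. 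Thus $\cbx = c^{(\infty)} \in \Ac(\xi)$, and unwinding the definition of $\Ac(\xi)$ produces a sequence $(\xi_n) \subset C(\ObS,\RR_+)$ with $\xi_n \to \xi$ in $L^p(\theta)$ and $c_{\xi_n} \to \cbx$ in $C(\ObOb)$.
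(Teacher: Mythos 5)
Your argument is correct: monotonicity of $\xi \mapsto c_\xi$ under pointwise maxima, the H\"older estimate of \ref{prop3.2}, and closedness of $\Ac(\xi)$ under uniform limits are exactly the right ingredients, and your directedness-plus-Dini iteration does give the stronger conclusion $\cbx \in \Ac(\xi)$ (with, in addition, $\xi_n \to \xi$ in $L^p(\theta)$, which is what is actually used later in the $\Gamma$-limsup step). The proof the paper has in mind (omitted, following Lemma 3.4 of \cite{carlier2008optimal}) uses the same two ingredients but packages them in one stroke: choose a countable family $(c_k)$ with $\sup_k c_k = \cbx$, approximating sequences $\xi^k_n \to \xi$ in $L^p(\theta)$ with $c_{\xi^k_n} \to c_k$ uniformly, and set $\eta_n := \max_{1\le k\le n} \xi^k_n$; then $\eta_n \in C(\ObS,\RR_+)$, $\eta_n \to \xi$ in $L^p(\theta)$, $c_{\eta_n} \ge \max_{k\le n} c_{\xi^k_n}$ by monotonicity, and \ref{prop3.2} extracts a uniformly convergent subsequence whose limit lies in $\Ac(\xi)$, dominates every $c_k$, hence equals $\cbx$. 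That one-shot diagonal construction dispenses with your Step 1 compactness/closedness discussion and with Dini's theorem; your route, in exchange, makes the structure of $\Ac(\xi)$ explicit (compact, upward directed, supremum attained), which is a legitimate and slightly more structural variant of the same idea. One point you leave implicit and should state: $\Ac(\xi) \neq \emptyset$, so that your countable dense family exists and $\cbx$ is finite; this follows from the density of $C(\ObS, \RR_+)$ in $L_+^p(\theta)$ together with the compactness part of \ref{prop3.2}, and is anyway needed for the definition \eqref{cbx} to make sense.
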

%

\begin{lemme} \label{lem1}
If $\xi \in C(\ObS, \RR_+)$ then $c_{\xi} = \cbx$. 
\end{lemme}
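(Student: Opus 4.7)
The plan is to prove the two inequalities separately. For $c_\xi \leq \cbx$ I take the constant sequence $\xi_n \equiv \xi$ in the definition \eqref{cbx}: since $\xi \in C(\ObS, \RR_+)$, one has $\xi_n \to \xi$ in $L^p(\theta)$ and $c_{\xi_n} = c_\xi$ trivially converges to $c_\xi$ in $C(\ObOb)$, so $c_\xi \in \Ac(\xi)$ and the bound follows from the supremum definition of $\cbx$.

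For the reverse inequality $\cbx \leq c_\xi$, I would fix an arbitrary $c = \lim_n c_{\xi_n} \in \Ac(\xi)$ with $\xi_n \in C(\ObS,\RR_+)$ and $\xi_n \to \xi$ in $L^p(\theta)$, and then fix $(x,y) \in \Omega \times \Omega$ and $\eta > 0$. I pick a near-optimal competitor $(\sigma, \rho)$ for $c_\xi(x,y)$, that is, $\sigma \in C_{x,y}$ and $\rho \in \Pc_\sigma$ with $\Lxsp \leq c_\xi(x,y) + \eta$, and (using that $\xi$ is bounded) perturb $\sigma$ slightly so that its image lies at distance at least $2\delta_0 > 0$ from $\partial\Omega$, at the cost of enlarging $\eta$. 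For every $z \in B_{\delta_0}(0)$ the translate $\sigma_z(t) := \sigma(t)+z$ then still lies in $\Omega$, but $\rho$ no longer belongs to $\Pc_{\sigma_z}$ because in general $v_k(\sigma_z(t)) \neq v_k(\sigma(t))$. I would correct $\rho$ into $\rho^z \in \Pc_{\sigma_z}$ by applying the controlled conical decomposition from \ref{hy4} to the residual $\dot\sigma(t) - \sum_k v_k(\sigma_z(t))\rho_k(t) = \sum_k [v_k(\sigma(t)) - v_k(\sigma_z(t))] \rho_k(t)$, which is $O(|z|^\alpha)$ by the H\"older regularity of the $v_k$ in \ref{hy5}, obtaining $\rho^z_k = \rho_k + u^z_k$ with $u^z_k \geq 0$ and $\|u^z\|_\infty = O(|z|^\alpha)$ uniformly in $t$.

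The core step is then to pass to the limit in the admissibility bound $c_{\xi_n}(x+z, y+z) \leq L_{\xi_n}(\sigma_z, \rho^z)$ via a Fubini--mollification argument. Setting $g_n^k(x) := \xi_n(x, v_k(x))$ and $g^k(x) := \xi(x, v_k(x))$, the structure $\theta(dx,dv) = \sum_k c_k(x) \delta_{v_k(x)} dx$ from \ref{hy5} transports $\xi_n \to \xi$ in $L^p(\theta)$ into $g_n^k \to g^k$ in $L^p(c_k\,dx)$ for each $k$. Averaging in $z \in B_\delta(0)$ and using Fubini,
\[
\frac{1}{|B_\delta|} \int_{B_\delta} L_{\xi_n}(\sigma_z, \rho) \, dz = \int_0^1 \sum_k \bigl(g_n^k \ast \tfrac{\mathbf{1}_{B_\delta}}{|B_\delta|}\bigr)(\sigma(t)) \, \rho_k(t) \, dt,
\]
and H\"older's inequality yields uniform convergence of the mollified $g_n^k$ to the mollified $g^k$ as $n \to \infty$ for fixed $\delta$, while the continuity of $g^k$ yields uniform convergence of the mollified $g^k$ to $g^k$ as $\delta \to 0^+$. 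A diagonal choice $\delta_n \to 0^+$ together with a mean-value selection then produces $z_n \in B_{\delta_n}$ with $\limsup_n L_{\xi_n}(\sigma_{z_n}, \rho^{z_n}) \leq \Lxsp$, the cost of replacing $\rho$ by $\rho^{z_n}$ being $O(\delta_n^\alpha)\|\xi_n\|_{L^p(\theta)}$, which vanishes in the limit. Passing to the limit in $c_{\xi_n}(x+z_n, y+z_n) \leq L_{\xi_n}(\sigma_{z_n}, \rho^{z_n})$ through the uniform convergence $c_{\xi_n} \to c$ and the continuity of $c$ gives $c(x,y) \leq \Lxsp \leq c_\xi(x,y)+\eta$; letting $\eta \to 0^+$ and extending to $\ObOb$ by continuity yields $\cbx \leq c_\xi$. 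The main obstacle I anticipate is precisely this correction of the decomposition: producing $\rho^z \geq 0$ with quantitative uniform closeness to $\rho$ is exactly where the existence of a controlled positive decomposition in \ref{hy4} is indispensable, since nothing in the mere feasibility $\rho \in \Pc_\sigma$ guarantees stability under perturbation of the directions.
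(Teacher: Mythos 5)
Your overall strategy is sound and is essentially the natural generalization of the translation-averaging proof of Lemma $3.5$ in \cite{carlier2008optimal} that the paper appeals to: the easy inequality $c_{\xi}\leq\cbx$ via the constant sequence, then for $c=\lim_n c_{\xi_n}\in\Ac(\xi)$ a near-optimal generalized curve $(\sigma,\rho)$, small translations $\sigma_z$ after pushing the curve away from $\partial\Omega$, Fubini over $z\in B_\delta$, and a correction of $\rho$ into $\Pc_{\sigma_z}$ by adding a nonnegative controlled conical decomposition of the residual, which is indeed exactly where \ref{hy4} and the H\"older regularity of the $v_k$'s enter. Two points, however, are genuine gaps as written.

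First, the estimate you state for the correction cost, namely $O(\delta_n^{\alpha})\|\xi_n\|_{L^p(\theta)}$ at the selected $z_n$, is not legitimate: that cost is $\int_0^1\sum_k u_k^{z_n}(t)\,\xi_n(\sigma_{z_n}(t),v_k(\sigma_{z_n}(t)))\,dt$, a line integral of a function that is only bounded in $L^p(\theta)$ along a single Lebesgue-null curve, and such an integral cannot be controlled by $\|\xi_n\|_{L^p(\theta)}$. The repair is to keep the correction term inside the $z$-average before the mean-value selection: averaging over $B_\delta$, using $0\leq u_k^z(t)\leq C\delta^{\alpha}\|\rho\|_{\infty}$ and H\"older for the convolution with $\mathbf{1}_{B_\delta}/|B_\delta|$, the averaged correction is of order $\delta^{\alpha-d/p}\sup_n\|\xi_n\|_{L^p(\theta)}$, and it is precisely the requirement $\alpha>d/p$ of \ref{hy5} (the same exponent bookkeeping as in the mollification step of the proof of \ref{lem3}) that makes it vanish; with only $\delta^{\alpha}$ claimed and no use of $\alpha>d/p$, the quantitative crux is missing. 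Second, your uniform bound $\|u^z\|_{\infty}=O(|z|^{\alpha})$ presupposes $\rho\in L^{\infty}([0,1])^N$, which a generic near-optimal $\rho\in\Pc_{\sigma}$ need not satisfy; you must first reduce to a constant-speed reparameterization and to a controlled decomposition with $\sum_k\rho_k(t)\leq C|\dot{\sigma}(t)|$ (again via \ref{hy4}, with a measurable selection), which does not increase $\Lxsp$. With these repairs, together with a correction of the decomposition after the inward perturbation of the curve (which you only sketch, but which is harmless since $\xi$ is continuous and the perturbation is small for the fixed curve), the argument goes through.
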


 \subsection{The $\Gamma$-convergence result}
 
 We will prove that the problem \eqref{pc1} is the continuous limit of the discrete problems \eqref{pd1} in the $\Gamma$-convergence sense. The $\Gamma$-convergence theory is a powerful tool to study the convergence of variational problems (convergence of values but also of minimizers) depending on a parameter. Here we want to study problems depending on a scale parameter (which is $\ep$), it is particularly well suited. References for the general theory of $\Gamma$-theory and many applications are the books of Dal Maso \cite{dal1993introduction} and Braides \cite{braides2002gamma}.
 
 First let us define weak $L^p$ convergence of a discrete family $\xep \in \RR_+^{\#\Eep}$.

\begin{defi} \label{def1}
For $\ep > 0$, let $\xep \in  \mathbb{R}_+^{\#\Eep}$ and $\xi \in L_+^p (\theta)$, then $\xep$ is said to weakly converge to $\xi$ in $L^p$ ($\xep \rightarrow \xi$) if : \\
\begin{enumerate}
\item There exists a constant $M>0$ such that $\forall \ep >0$, one has 
\[ 
\| \xep \|_{\ep,p} := \left( \sum_{(x,e) \in \Eep} |e|^d \xep (x, e) ^p \right) ^{1/p} \leq M,
\]
\item For every $\varphi \in C(\ObS,\mathbb{R})$, one has
\[
\lim_{\ep \rightarrow 0^+}  \sum_{(x,e) \in \Eep} |e|^d \varphi \left (x, \frac{e}{|e|} \right ) \xep (x, e) = \IOS \varphi(x, v) \xi (x, v) \theta (dx, dv). 
\] 
\end{enumerate}
\end{defi}

\begin{defi}
For $\ep > 0$, let $F^{\ep} :  \mathbb{R}_+^{\#\Eep} \rightarrow \mathbb{R} \cup \{+ \infty \}$ and $ F : L_+^p(\theta) \rightarrow \mathbb{R} \cup \{+ \infty \}$, then the family of functionals $(F^{\ep})_{\ep}$ is said to $\Gamma$-converge (for the weak $L^p$ topology) to $F$ if and only if the following two conditions are satisfied: \\
\begin{enumerate}
\item (\text{$\Gamma$-liminf inequality}) $\forall \xi \in L_+^p (\theta), \xep \in  \mathbb{R}_+^{\#\Eep}$ such that $\xep \rightarrow \xi$, one has 
\[
\liminf_{\ep \rightarrow 0^+} F^{\ep}(\xep) \geq F(\xi), 
\]
\item (\text{$\Gamma$-limsup inequality}) $\forall \xi \in L_+^p (\theta)$, there exists $\xep \in  \mathbb{R}_+^{\#\Eep}$ such that $\xep \rightarrow \xi$ and
\[
\limsup_{\ep \rightarrow 0^+} F^{\ep}(\xep) \leq F(\xi), 
\]
\end{enumerate}
\end{defi}

Now, we can state our main result, whose complete proof will be performed in the next section: 

\begin{theo} \label{th1} Under all previous assumptions, the family of functionals
$(J^{\ep})_{\ep} \; \Gamma$-converges (for the weak $L^p$ topology) to the functional $J$ defined by \eqref{pc1}.
\end{theo}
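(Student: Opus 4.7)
The plan is to establish both $\Gamma$-inequalities by decomposing $J^\ep = I_0^\ep - I_1^\ep$ and $J = I_0 - I_1$ and treating each part separately. For the liminf, it suffices to show $\liminf I_0^\ep(\xep) \geq I_0(\xi)$ and $\limsup I_1^\ep(\xep) \leq I_1(\xi)$ along any weak $L^p$-convergent sequence $\xep \to \xi$. For the limsup I will produce a recovery sequence $\xep \to \xi$ along which $I_0^\ep(\xep) \to I_0(\xi)$ and $I_1^\ep(\xep) \to I_1(\xi)$.

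The liminf for the local term $I_0^\ep$ follows by Fenchel duality: for any $m \in C(\ObS, \RR_+)$ the pointwise inequality
\[
H\bigl(x, \tfrac{e}{|e|}, \xep(x,e)\bigr) \geq m\bigl(x, \tfrac{e}{|e|}\bigr)\xep(x,e) - G\bigl(x, \tfrac{e}{|e|}, m(x, \tfrac{e}{|e|})\bigr)
\]
holds arc-wise; multiplying by $|e|^d$ and summing over $\Eep$, \ref{def1} and \ref{hy5} pass both right-hand terms to their continuous counterparts, yielding $\liminf I_0^\ep(\xep) \geq \int_{\OS}(m\xi - G(x,v,m))\,d\theta$, and taking the supremum over $m$ recovers $\int H(x,v,\xi)\,d\theta = I_0(\xi)$ via $H=G^*$. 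For the nonlocal term I would rewrite
\[
I_1^\ep(\xep) = \sum_{(x,y) \in \Nep \times \Nep} \bigl(\ep^{\frac{d}{2}-1}\gamma^\ep(x,y)\bigr)\,\ep^{1-\frac{d}{2}}c^\ep_{\xep}(x,y),
\]
where $c^\ep_{\xep}(x,y) := \min_{\sigma \in C^\ep_{x,y}}\sum_{(z,e)\subset\sigma}|e|^{d/2}\xep(z,e)$; the scaling $\ep^{1-d/2}|e|^{d/2}\simeq |e|$ turns this quantity into a bona fide discrete path integral. Approximating $\xep$ by continuous $\eta_n \to \xi$ in $L^p(\theta)$ and using \ref{prop3.2} to obtain an equicontinuous family $\{c_{\eta_n}\}$, a comparison of the discrete cost along any minimizing path with the continuous cost of its piecewise-affine reparameterization shows that every subsequential uniform limit of $\ep^{1-d/2}c^\ep_{\xep}$ lies in $\mathcal{A}(\xi)$ and is therefore majorized by $\bar{c}_\xi$; combined with the weak-$\ast$ convergence of $\ep^{d/2-1}\gamma^\ep$ from \ref{hy1}, this gives $\limsup I_1^\ep(\xep) \leq \int \bar{c}_\xi\,d\gamma = I_1(\xi)$.

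For the recovery sequence a standard diagonal argument based on the $L^p$-density of $C(\ObS, \RR_+)$ reduces matters to $\xi$ continuous; in that case I take $\xep(x,e) := \xi(x, e/|e|)$. Then $I_0^\ep(\xep) \to I_0(\xi)$ is precisely \ref{hy5} applied to the continuous test function $(x,v)\mapsto H(x,v,\xi(x,v))$. Since $c_\xi = \bar{c}_\xi$ by \ref{lem1}, proving $I_1^\ep(\xep) \to \int c_\xi\,d\gamma$ reduces to the uniform convergence $\ep^{1-d/2}c^\ep_\xi(x^\ep,y^\ep) \to c_\xi(x,y)$ for any $(x^\ep,y^\ep)\to(x,y)$. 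The upper bound is constructive: given $(\sigma,\rho)$ nearly optimal for $c_\xi(x,y)$, I build a discrete path $\sigma^\ep \in C^\ep_{x^\ep,y^\ep}$ whose successive arcs in each family $\Eep_k$ are allotted in proportion to the weight $\rho_k$, using \ref{hy4} to secure a controlled conical decomposition of $\dot\sigma$ and \ref{hy6} to match these weights with actual arcs; a Riemann-sum comparison then yields $\ep^{1-d/2}c^\ep_\xi(x^\ep,y^\ep) \leq L_\xi(\sigma,\rho)+o(1)$. The lower bound is obtained by piecewise-affine extension of an optimal discrete path, extraction of a uniform Ascoli limit (the admissible lengths being uniformly bounded), and identification via \ref{hy8} of the limit as an admissible generalized curve whose cost bounds the discrete minimum from below.

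The main obstacle is the nonlocal $I_1^\ep$-analysis. Two features of the general-network setting make this substantially harder than in the cartesian grid of \cite{baillon2012discrete}: the conical decomposition of a speed in the variable family $\{v_k(x)\}$ is not unique, so the recovery construction must track a specific $\rho$ arc-by-arc rather than merely align directions; and the uniform-on-$(x,y)$ convergence of the rescaled minimal-path functional rests on a discrete Morrey-type estimate supplied by \ref{hy8} together with the condition $p>d$ from \ref{hy3}, needed to upgrade weak $L^p$ bounds into pointwise control of path costs. The local convex term $I_0^\ep$ is comparatively routine.
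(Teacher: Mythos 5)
Your treatment of the local term is fine: the Fenchel/Young duality argument for $\liminf I_0\Ep(\xep) \geq I_0(\xi)$ is essentially the argument the paper invokes (via Lemma $4.2$ of \cite{baillon2012discrete}), and the recovery sequence $\xep(x,e)=\xi(x,e/|e|)$ for continuous $\xi$, followed by a diagonal argument through \ref{convcxin}, matches the paper's Step 1/Step 2. The genuine gap is in the $\Gamma$-liminf for the nonlocal term. You assert that, after approximating $\xi$ by continuous $\eta_n\to\xi$ strongly in $L^p(\theta)$, ``a comparison of the discrete cost along any minimizing path with the continuous cost of its piecewise-affine reparameterization shows that every subsequential uniform limit of $\ep^{1-d/2}c\Ep_{\xep}$ lies in $\mathcal{A}(\xi)$.'' There is no such comparison available: $\xep$ converges to $\xi$ only \emph{weakly} in the discrete $L^p$ sense of \ref{def1}, so the values of $\xep$ along any particular path are not controlled by $\eta_n$ or by $\xi$ in any pointwise or pathwise way, and membership in $\mathcal{A}(\xi)$ would require exhibiting continuous functions converging \emph{strongly} in $L^p(\theta)$ to $\xi$ whose induced costs converge uniformly to the same limit $c$ --- which is essentially the conclusion you are trying to prove, not a consequence of reparameterizing discrete minimizers. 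This step is precisely where the whole difficulty of the theorem sits, and your proposal replaces it with an unsubstantiated claim.

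The paper's route around this obstruction is different and is the core of Section 4.1: (i) the discrete costs $c\Ep(x,y)=\min_\sigma\sum|e|\xep(z,e)$ satisfy the arcwise inequality $c\Ep(x_0,x+e)-c\Ep(x_0,x)\leq|e|\xep(x,e)$, and the discrete Morrey estimate \ref{dismor} (which rests on \ref{hyre}, not on \ref{hy8}) gives a uniform H\"older bound, hence an Arzel\`a--Ascoli limit $c$ with $\limsup I_1\Ep(\xep)\leq\IObOb c\,d\gamma$; (ii) the regularity $c(x_0,\cdot)\in W^{1,p}(\Omega)$ is obtained through the disjoint-path structure of \ref{hy8} by a discrete integration by parts; (iii) \ref{lem4} passes the arcwise inequality to the limit, using the weak convergences of \ref{hy6} and of $\xep$ together with a mollification of the minimizing decomposition, to get $\nabla_x c(x_0,\cdot)\cdot u\leq\Phi_{\xi}(\cdot,u)$ a.e.; (iv) the comparison principle \ref{lem3} (mollification, the H\"older continuity of the $v_k$ with $\alpha>d/p$, $p>d$, and \ref{lem1}) then yields $c\leq\cbx$. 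None of this machinery appears in your sketch, and it cannot be bypassed by a direct approximation of $\xep$ by continuous metrics. A secondary remark on your limsup step: only the one-sided bound $c\geq c_{\xi}$ is needed there (each discrete path, with its canonical decomposition, is admissible for the continuous problem), so the constructive ``upper bound'' you propose --- building discrete paths that track a given $\rho$ arc-by-arc --- is unnecessary, and under the weak alignment information actually assumed in \ref{hy6} it is far from clear it could be carried out as stated.
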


Classical arguments from general $\Gamma$-convergence theory allow us to have the following convergence result : 

\begin{coro} \label{coro1}
Under all previous assumptions, the problems \eqref{pd1} for all $\ep >0$ and \eqref{pc1} admit solutions and one has: 
\[ \min_{\xep \in  \mathbb{R}_+^{\#\Eep}} J^{\ep}(\xep) \rightarrow \min_{\xi \in L_+^p (\theta)} J(\xi). \]
Moreover, if for any $\ep > 0$, $\xep$ is the solution of the minimization problem \eqref{pd1} then $\xep \rightarrow \xi$ where $\xi$ is the minimizer of $J$ over $L_+^p (\theta)$.
\end{coro}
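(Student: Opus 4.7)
The plan is to invoke the standard fundamental theorem of $\Gamma$-convergence, which states that if a sequence of functionals $\Gamma$-converges and is equi-coercive, then minima converge and any cluster point of minimizers is a minimizer of the limit. So beyond Theorem~\ref{th1}, the two things to establish are (a) existence of discrete minimizers, (b) equi-coercivity of $(J^\ep)_\ep$ with respect to the weak $L^p$ convergence of \ref{def1}, which also yields existence of a minimizer of $J$.

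\textbf{Existence for \eqref{pd1}.} For each fixed $\ep>0$, the problem is posed on the finite-dimensional space $\RR_+^{\#\Eep}$. The map $\xep \mapsto I_0^\ep(\xep)$ is continuous and, by the lower bound in \eqref{H3}, coercive of order $p$; the map $\xep \mapsto I_1^\ep(\xep)$ is continuous and, since $T^\ep_{\mathbf{t^\ep}}$ is a $\min$ of linear functions, at most linear in $\xep$. Hence $J^\ep$ is continuous and coercive on $\RR_+^{\#\Eep}$, so it attains its minimum.

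\textbf{Equi-coercivity.} Suppose $(\xep)_\ep$ satisfies $\sup_\ep J^\ep(\xep) < +\infty$. By \ref{hy3},
\[
I_0^\ep(\xep) \geq \lambda \|\xep\|_{\ep,p}^p - \lambda \sum_{(x,e) \in \Eep} |e|^d,
\]
and the last sum is uniformly bounded by \ref{hy5}. On the other hand, using \ref{hy4} to produce a controlled conical decomposition along each shortest path and \ref{hy1} to bound the total mass, one obtains an estimate of the form $I_1^\ep(\xep) \leq C\|\xep\|_{\ep,p}$ with $C$ independent of $\ep$ (this is exactly the type of bound used to prove the $\Gamma$-$\liminf$ inequality in Theorem~\ref{th1}). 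Combining these, $\|\xep\|_{\ep,p}$ is uniformly bounded. A standard compactness argument (the discrete counterpart of weak $L^p$ compactness, which underlies \ref{def1}) then extracts a subsequence weakly converging to some $\xi \in L_+^p(\theta)$.

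\textbf{Conclusion via $\Gamma$-convergence.} Let $\xep$ be a minimizer of $J^\ep$. Equi-coercivity gives a subsequence with $\xep \to \xi$ weakly in $L^p$. By the $\Gamma$-$\liminf$ inequality, $J(\xi) \leq \liminf J^\ep(\xep)$. For any $\eta \in L_+^p(\theta)$, the $\Gamma$-$\limsup$ inequality provides $\eta^\ep \to \eta$ with $\limsup J^\ep(\eta^\ep) \leq J(\eta)$, so $J^\ep(\xep) \leq J^\ep(\eta^\ep)$ gives $J(\xi) \leq J(\eta)$ in the limit. Hence $\xi$ minimizes $J$ (proving existence for \eqref{pc1}) and $\min J^\ep \to \min J$.

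\textbf{Full convergence.} The strict convexity of $H(x,v,\cdot)$ (noted after \ref{hy2}) makes $I_0$ strictly convex on $L_+^p(\theta)$; since $\xi \mapsto c_\xi(x,y)$ is an infimum of linear maps and hence concave, and this concavity passes to $\cbx$ via the definition \eqref{cbx} and Lemmas \ref{convcxin}--\ref{lem1}, the functional $-I_1$ is convex. Therefore $J$ is strictly convex and its minimizer is unique, so the whole family $(\xep)$ (not just a subsequence) converges weakly in $L^p$ to $\xi$. The main delicate point in this argument is the uniform bound $I_1^\ep(\xep) \leq C\|\xep\|_{\ep,p}$ underlying equi-coercivity, which crucially rests on \ref{hy4}; the remainder follows the standard $\Gamma$-convergence machinery.
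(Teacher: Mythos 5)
Your overall strategy (existence of discrete minimizers, equi-coercivity, the fundamental theorem of $\Gamma$-convergence, uniqueness of the continuous minimizer by strict convexity) is exactly the paper's route, and your observation that $-I_1$ is convex because $\xi\mapsto c_\xi$ is an infimum of functionals linear in $\xi$ usefully makes explicit a point the paper leaves implicit. However, there is a genuine gap at the step you yourself call the main delicate point: the uniform estimate $I_1^{\ep}(\xep)\le C\|\xep\|_{\ep,p}$. You justify it by appealing to \ref{hy4} (``a controlled conical decomposition along each shortest path'') together with \ref{hy1}. Assumption \ref{hy4} only controls the size of conical decompositions of a vector in the family $\{v_k(x)\}$; it gives no control on $\min_{\sigma\in C^{\ep}_{x,y}}\sum_{(z,e)\subset\sigma}|e|\,\xep(z,e)$, and an estimate performed along a single path cannot deliver the claimed bound from the discrete $L^p$ control alone: along a path with $L$ arcs of length $\sim\ep$, H\"older gives
\[
\sum_{(z,e)\subset\sigma}|e|\,\xep(z,e)\;\le\;\Bigl(\sum_{(z,e)\subset\sigma}|e|^{\frac{p-d}{p-1}}\Bigr)^{1-\frac1p}\,\|\xep\|_{\ep,p}\;\sim\;\bigl(L\,\ep^{\frac{p-d}{p-1}}\bigr)^{1-\frac1p}\,\|\xep\|_{\ep,p},
\]
and with $L\sim\ep^{-1}$ the prefactor behaves like $\ep^{(1-d)/p}\to+\infty$ as soon as $d\ge 2$ (worse for long loop-free paths). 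So the equi-coercivity bound cannot be localized to a shortest path, and as written it is unproved.

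What actually yields it in the paper is the discrete Morrey inequality, \ref{dismor}, which rests on the polytope structure of \ref{hyre} and on $p>d$ from \ref{hy3}: the value function $c^{\ep}(x_0,\cdot)$ satisfies $|c^{\ep}(x_0,x)-c^{\ep}(x_0,y)|\le \ep\max_{e/(x,e)\in\Eep}\xep(x,e)$ for neighboring nodes, i.e.\ its discrete gradient is controlled in the discrete $L^p$ norm over the whole $d$-dimensional network, and \ref{dismor} then gives the uniform bound $c^{\ep}(x,y)\le C\|\xep\|_{\ep,p}$, which combined with \ref{hy1} gives $|I_1^{\ep}(\xep)|\le C\|\xep\|_{\ep,p}$. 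Your argument needs this lemma (or an equivalent substitute exploiting the full $d$-dimensional structure), not \ref{hy4}. The remaining steps are essentially the paper's: in particular the compactness you invoke is carried out there via the Radon measures $M_{\ep}$, H\"older, Banach--Alaoglu and \ref{hy5} to identify an $L^p(\theta)$-representative of the limit, a step you state rather than prove, but that part is routine once the uniform bound on $\|\xep\|_{\ep,p}$ is in place.
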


\bpr
First, due to \eqref{H3} we have
\begin{align*}
I_0\Ep (\xep) & \geq \lambda \sum_{(x,e) \in \Eep} |e|^d \left ( \xep (x, e) ^p -1 \right ) \\
& = \lambda \| \xep \|_{\ep, p}^p - \lambda \sum |e|^d \\
& \geq \lambda \| \xep \|_{\ep, p}^p - C,
\end{align*}
since from \ref{hy6} it follows that
\[
\sum_{(x,e) \in \Eep} |e|^d  \rightarrow \IOS \theta (dx, dv) = \sum_{k=1}^N \IO c_i(x) \: dx = C. 
\]
To estimate the other term $I_1\Ep (\xep)$, let us write
\[
c\Ep (x,y) = \min_{\sigma \in C_{x,y}\Ep} \sum_{(z,e) \subset \sigma} |e| \xep  ( z, e).
\]
$\forall x_0 \in N\Ep, \forall x,y  \in N\Ep$ two neighboring nodes, we have 
\begin{align*}
|c\Ep (x_0,x) - c\Ep (x_0,y) | & \leq  \max_{e / (x,e) \in \Eep} |e| \xep ( x, e) \\
& \leq \ep \max_{e / (x,e) \in \Eep} \xep ( x, e).
\end{align*}
Then thanks to \ref{dismor}, we have for every $x,y \in N\Ep$, 
\[
|c\Ep (x_0,x) - c\Ep (x_0,y) | \leq  C \left \| \max_{e / (\cdot,e) \in \Eep} \xep ( \cdot, e) \right \|_{\ep,p},
\]
hence with $x_0=x$ 
\begin{align*}
c\Ep (x,y)  & \leq  C \left \| \max_{e / (\cdot,e) \in \Eep} \xep ( \cdot, e) \right \|_{\ep,p} \\
& \leq C \| \xep \|_{\ep,p}
\end{align*}
so that recalling \eqref{pd1.2}, we have 
\[
|I_1\Ep(\xep) | \leq C \| \xep \|_{\ep,p} \sum_{x,y \in N\Ep} \ep^{d/2-1} \gamma\Ep (x,y) \leq C \| \xep \|_{\ep,p}, 
\]
because it follows from \ref{hy1} that
\[
\sum_{x,y \in N\Ep} \ep^{d/2-1} \gamma\Ep (x,y) \rightarrow \IOO d \gamma \text{ as } \ep \rightarrow 0^+.
\]
In particular, we get the equi-coercivity estimate
\[
J\Ep(\xep) \geq C(\| \xep \|_{\ep, p}^p - \| \xep \|_{\ep, p} -1).
\]
Since $J\Ep$ is continuous on $ \RR_+^{\#\Eep}$, this proves that the infimum of $J\Ep$ over $ \RR_+^{\#\Eep}$ is attained at some $\xep$ and also that $\| \xep \|_{\ep, p}$ is bounded, in particular, we can define for $\ep >0$ the following Radon measure $M_{\ep}$  :
\[
\langle M_{\ep}, \varphi \rangle := \sum_{(x,e) \in \Eep} |e|^d \varphi \left ( x, \frac{e}{|e|} \right ) \xep ( x, e ), \: \forall \varphi \in C(\ObS, \RR).
\]
Due to H\"older inequality we have for every $\ep > 0$ and $\vp \in C(\ObS, \RR)$, 
\begin{equation} \label{eqM}
|\langle M_{\ep}, \varphi \rangle| \leq C \| \vp \|_{\ep, q}
\end{equation}
where $q = p/(p-1)$ is the conjugate exponent of $p$ and the semi-norm $\| . \|_{\ep, q}$ is defined by:
\[
 \| \vp \|_{\ep,q} := \left( \sum_{(x,e) \in \Eep} |e|^d \vp \left (x, \frac{e}{|e|} \right) ^q \right) ^{1/q}. 
 \]
Because of \ref{hy5} there is a nonnegative constant $C$ such that $ \| \vp \|_{\ep,q} \leq C \| \vp \|_{\infty}$ for every $\vp \in C(\ObS, \RR)$. We deduce from \eqref{eqM} and Banach-Alaoglu's theorem that there exists a subsequence (still denoted by $M_{\ep}$) and a Radon measure $M$ over $\ObS$ with values in $\RR$ to which $M_{\ep}$ weakly star converges. Moreover, due to \eqref{eqM} and \ref{hy5}, we have for every $\vp \in C(\ObS, \RR)$
\[
| \langle M, \vp \rangle | \leq C \lim_{\ep \rightarrow 0^+} \| \vp \|_{\ep, q} = C \| \vp \|_{L^q(\theta)}
\]
which proves that $M$ in fact admits an $L^p(\theta)$-representative denoted by $\xi$. $\xi \in L_+^p (\theta)$ (componentwise nonnegativity is stable under weak convergence) and $\xep \rightarrow \xi$ in the sense of \ref{def1}. We still have to prove that $\xi$ minimizes $J$ over $L_+^p (\theta)$. First from the $\Gamma$-liminf inequality we find that 
\[
J(\xi) \leq \liminf_{\ep \rightarrow 0^+} J\Ep(\xep) =  \liminf_{\ep \rightarrow 0^+} \min_{\xep \in  \mathbb{R}_+^{\#\Eep}} J^{\ep}(\xep).
\]
Let $\zeta \in L_+^p(\theta)$, we know from the $\Gamma$-limsup inequality that there exists a sequence $(\zeta\Ep)_{\ep} \in \mathbb{R}_+^{\#\Eep}$ such that $\zeta\Ep \rightarrow \zeta$ in the sense of \ref{def1} and that
\[
\limsup_{\ep \rightarrow 0^+} J\Ep(\zeta\Ep) \leq J(\zeta). 
\]
Since $\xep$ minimizes $J\Ep$ we have that
\[
J(\xi) \leq \liminf_{\ep \rightarrow 0^+} J\Ep(\xep) \leq \limsup_{\ep \rightarrow 0^+} J\Ep(\xep) \leq \limsup_{\ep \rightarrow 0^+} J\Ep(\zeta\Ep) \leq J(\zeta). 
\]
We can then deduce that $\xi$ minimizes $J$ over $L_+^p (\theta)$ and we have also proved the existence of a minimizer to the limit problem. We also have that
\[
\min_{L_+^p (\theta)} J \leq \liminf_{\ep \rightarrow 0^+} \min_{\xep \in  \mathbb{R}_+^{\#\Eep}} J^{\ep}(\xep) \leq \limsup_{\ep \rightarrow 0^+} \min_{\xep \in  \mathbb{R}_+^{\#\Eep}} J^{\ep}(\xep) \leq J(\zeta), \: \forall \zeta \in L_+^p (\theta)
\]
which provides the convergence of the values of the discrete minimization problems to the value of the continuous one. Furthermore we have convergence of the whole family $\xep$ and not only of a subsequence by the uniqueness of the minimizer $\xi$ of $J$ over $L_+^p (\theta)$ since $J$ (and in particular $I_0$, due to \ref{hy2}) is strictly convex.
 \epr
 
 \section{Proof of the theorem}

\subsection{The $\Gamma$-liminf inequality} \label{subs4.1}

For $\ep > 0$, let $\xep \in  \mathbb{R}_+^{\#\Eep}$ and $\xi \in L_+^p (\theta)$ such that $\xep \rightarrow \xi$ (in the sense of \ref{def1}). In this subsection, we want to prove that
\beq \label{liminf}
\liminf_{\ep \rightarrow 0^+} J\Ep (\xep) \geq J(\xi).
\eeq

We need some lemmas to establish this inequality. The first one concerns the terms $I_0\Ep$ and $I_0$. 

\begin{lemme} \label{lem4.1}
One has 
\[
 \liminf_{\ep \rightarrow 0^+} I_0^{\ep} (\xep) \geq I_0(\xi).
 \]
\end{lemme}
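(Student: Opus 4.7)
\bpr
The plan is to exploit convex conjugacy via the Fenchel--Young inequality. Since $G(x,v,\cdot)$ is convex, proper and lower semicontinuous (being $C^1$ by \ref{hy2}) and $H(x,v,\cdot)=G(x,v,\cdot)^*$ by definition, the Fenchel--Moreau theorem yields the pointwise duality
\[
H(x,v,t)\geq st-G(x,v,s),\qquad\forall\,(x,v)\in\ObS,\ s,t\geq 0.
\]
My strategy is to insert this inequality inside the sum defining $I_0\Ep$, pass to the $\liminf$ using the weak convergences already at hand, and finally saturate over the continuous test function.

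Fix $\varphi\in C(\ObS,\RR_+)$. Applying Fenchel--Young on each arc $(x,e)\in\Eep$ with $t=\xep(x,e)$ and $s=\varphi(x,e/|e|)$, multiplying by $|e|^d$ and summing over $\Eep$ yields
\[
I_0\Ep(\xep)\geq \sum_{(x,e)\in\Eep}|e|^d\,\xep(x,e)\,\varphi\!\left(x,\tfrac{e}{|e|}\right)-\sum_{(x,e)\in\Eep}|e|^d\,G\!\left(x,\tfrac{e}{|e|},\varphi\!\left(x,\tfrac{e}{|e|}\right)\right).
\]
Taking $\liminf$ as $\ep\to 0^+$, the first sum converges to $\int_{\OS}\xi\,\varphi\,\theta(dx,dv)$ by part~(ii) of \ref{def1} applied to the continuous test $\varphi$, while the second sum converges to $\int_{\OS}G(x,v,\varphi(x,v))\,\theta(dx,dv)$ by applying \ref{hy5} to the continuous function $(x,v)\mapsto G(x,v,\varphi(x,v))$. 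Hence, for every $\varphi\in C(\ObS,\RR_+)$,
\[
\liminf_{\ep\to 0^+}I_0\Ep(\xep)\geq \int_{\OS}\bigl[\xi(x,v)\,\varphi(x,v)-G(x,v,\varphi(x,v))\bigr]\theta(dx,dv).
\]

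The remaining and most delicate step is to take the supremum over $\varphi\in C(\ObS,\RR_+)$ on the right-hand side and exchange it with the integral to recover
\[
\int_{\OS}H(x,v,\xi(x,v))\,\theta(dx,dv)=I_0(\xi).
\]
This sup--integral interchange is the main obstacle. I plan to handle it by a density/selection argument: choose $\bar\varphi(x,v)$ as the (unique, by strict convexity of $H$) pointwise Fenchel--Young equalizer, which is a measurable function of $(x,v)$; approximate $\bar\varphi$ in $L^q(\theta)$ (with $q=p/(p-1)$) by a sequence of continuous nonnegative functions $\varphi_n$ using density of $C(\ObS)$ in $L^q(\theta)$; and pass to the limit in $\int(\xi\,\varphi_n - G(\cdot,\cdot,\varphi_n))\,d\theta\to\int H(\cdot,\cdot,\xi)\,d\theta$ using the $q$-growth of $G$ (inherited from the $p$-growth of $H$ in \ref{hy3}) together with $\xi\in L^p(\theta)$ and Hölder's inequality. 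Combined with the previous display, this yields the claim.
\epr
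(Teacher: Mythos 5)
Your proof is correct and follows essentially the same route as the paper, which simply refers to Lemma 4.2 of \cite{baillon2012discrete}: the Fenchel--Young inequality tested against continuous nonnegative functions, passage to the limit via \ref{def1} and \ref{hy5}, and then saturation of the supremum by a measurable-selection and density argument using the $q$-growth of $G$ inherited from \eqref{H3}. One cosmetic remark: uniqueness of the pointwise maximizer follows from strict convexity of $G(x,v,\cdot)$ (i.e. $g$ increasing), not of $H$, but uniqueness is not even needed since a measurable selection suffices.
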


The proof is similar to that of Lemma $4.2$ in \cite{baillon2012discrete}.
%
Now we need the following discrete version of Morrey's inequality to have information about the nonlocal term.

\begin{lemme} \label{dismor}
Let $\theta\Ep \in \RR_+^{\#N\Ep}$ and $\varphi\Ep \in \RR_+^{\#N\Ep}$ such that
\beq \label{eqlem2}
|\varphi\Ep(x)-\varphi\Ep(y)| \leq \ep \theta\Ep(x), \text{ for every } x \in N\Ep \text{ and every } y \text{ neighbor of } x,
\eeq
then there exists a constant $C$ such that for every $(x,y) \in \Eep \times \Eep$, one has
\[
|\varphi\Ep(x)-\varphi\Ep(y) | \leq C \| \theta\Ep \|_{\ep, p} (|x_1 - y_1| + |x_2 - y_2|)^{\beta}
\]
where $\beta = 1 - d/p$ and 
\[
\| \theta\Ep \|_{\ep, p} = \left ( \ep^d \sum_{x \in N\Ep} \theta\Ep (x)^p \right ) ^{1/p}.
\]
\end{lemme}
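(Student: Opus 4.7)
The plan is to carry out a discrete analogue of the classical Morrey argument which, for $p>d$, gives $W^{1,p}\hookrightarrow C^{0,1-d/p}$. In the continuous proof one controls $|u(x)-u_B|$ for $B$ a ball around $x$ of radius $\sim |x-y|$ by averaging the line integrals $|u(x)-u(z)|\le\int_{[x,z]}|\nabla u|$ over $z\in B$, rewriting the resulting double integral by a Fubini-type argument in spherical coordinates, and applying H\"older's inequality to the weight $|w-x|^{-(d-1)}$, whose $q$-th power is integrable precisely when $p>d$. Here the role of $|\nabla u|$ is played by $\ep\,\theta\Ep$ via the hypothesis \eqref{eqlem2}, line integrals become telescoping sums along discrete paths, and the spherical-coordinate change is supplied by \ref{hy8}.

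Concretely, I would fix $x,y\in N\Ep$, set $R=|x-y|$, and reduce to bounding $|\varphi\Ep(x)-\overline{\varphi}\Ep_B|$, where $B$ is the set of nodes in a ball of radius $\sim R$ around $x$ and $\overline{\varphi}\Ep_B:=|B|^{-1}\sum_{z\in B}\varphi\Ep(z)$; an analogous bound at $y$ and the triangle inequality will give the claim. For each $z\in B$ I construct a connecting path $\pi(z)$ from $x$ to $z$ by concatenating a bounded number of pieces drawn from the $d$ disjoint families $C\Ep_1,\dots,C\Ep_d$ of \ref{hy8}: the linear independence of $e_1,\dots,e_d$ and the covering property of these families allow every displacement in $B$ to be realised in this way, after splitting $B$ into finitely many orthant-like subregions. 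Iterating \eqref{eqlem2} along $\pi(z)$ gives the telescoping bound
\[
|\varphi\Ep(x)-\varphi\Ep(z)|\le \ep\sum_{w\in\pi(z)}\theta\Ep(w).
\]

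Averaging over $z\in B$ and swapping the order of summation, each node $w$ is weighted by $\#\{z\in B : w\in\pi(z)\}$. The crucial point is that the technical condition $\bigl||y_{k+1}-y_k|^{d-1}-|y_k-y_{k-1}|^{d-1}\bigr|=O(\ep^d)$ from \ref{hy8} is precisely the discrete avatar of the polar volume element $dV\approx r^{d-1}\,dr$, so this multiplicity is dominated by $C|w-x|^{d-1}/\ep^{d-1}$. Dividing by $|B|\sim(R/\ep)^d$ and applying the discrete H\"older inequality with exponents $p$ and $q=p/(p-1)$ then yields
\[
|\varphi\Ep(x)-\overline{\varphi}\Ep_B|\le C\|\theta\Ep\|_{\ep,p}\left(\ep^d\sum_{w\in B}\frac{1}{|w-x|^{(d-1)q}}\right)^{1/q},
\]
and the last sum is a Riemann approximation of $\int_{|w-x|\le R}|w-x|^{-(d-1)q}\,dw\sim R^{d-(d-1)q}$, which is finite exactly because $p>d$ forces $(d-1)q<d$. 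Extracting the exponent yields $R^{(d-(d-1)q)/q}=R^{1-d/p}=R^{\beta}$, closing the estimate.

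The main obstacle I anticipate is the combinatorial/geometric step of constructing the paths $\pi(z)$ and verifying the multiplicity estimate $\#\{z\in B : w\in\pi(z)\}\lesssim(|w-x|/\ep)^{d-1}$, since \ref{hy8} only furnishes disjoint paths along $d$ fixed directions rather than a full radial family; patching segments of different $C\Ep_i$ to simulate a radial "cone" of paths will require care, and this is precisely where the $|y_{k+1}-y_k|^{d-1}$-regularity assumption of \ref{hy8} intervenes as the discrete Jacobian. Once this bookkeeping is in place, the remainder is the standard Morrey/H\"older computation, with $p>d$ entering in exactly the same way as in the continuous theorem.
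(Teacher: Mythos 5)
Your overall strategy (a direct discrete chaining version of Morrey's proof) is genuinely different from the paper's, but the step you yourself flag as the ``main obstacle'' is precisely where the argument breaks, and as sketched it does not close. Write $m(w)=\#\{z\in B:\ w\in\pi(z)\}$ and $r=|w-x|$. For your displayed inequality you need, after dividing by $\#B\sim(R/\ep)^d$, the kernel $\ep^d r^{-(d-1)}$, i.e.\ the multiplicity bound $m(w)\leq C\,R^d/(\ep\, r^{d-1})$, which \emph{decreases} in $r$. The bound you assert, $m(w)\leq C\,(r/\ep)^{d-1}$, increases in $r$; for $d\geq 3$ it is too weak to yield the kernel (at $r\sim R$ it allows $(R/\ep)^{d-1}\gg R/\ep$), so the passage from your multiplicity claim to the displayed H\"older step is a non sequitur. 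Worse, for the paths you actually propose --- concatenations of a \emph{bounded} number of segments taken from the $d$ families of \ref{hy8} --- both bounds are false: such paths are staircase-like, so a node $w$ on the first leg issued from $x$ is traversed by a positive fraction of all $z\in B$ (multiplicity of order $(R/\ep)^d$), and redoing your averaging/H\"older computation with these paths produces a factor $\ep^{(1-d)/p}$ that blows up as $\ep\to0$ instead of an $\ep$-uniform constant. To recover the radial kernel $r^{-(d-1)}$ you would need, for every $z$, a neighbor-path shadowing the straight segment $[x,z]$ (hence unboundedly many alternations between direction families) together with the quantitative multiplicity count; \ref{hy8} supplies nothing of the sort --- in the paper it plays a different role, namely proving $W^{1,p}$ regularity of the limit $c(x_0,\cdot)$ --- and constructing such segment-shadowing paths with uniform control under the paper's weak assumptions (varying directions $v_k(x)$, nonconstant arc lengths) is essentially the whole content of the lemma, left unproved in your proposal. (Your final H\"older/Riemann-sum computation, including the exponent $\beta=1-d/p$ and the use of $p>d$ through $(d-1)q<d$, is correct once the kernel is granted.)

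For comparison, the paper proves \ref{dismor} without any chaining: it invokes \ref{hyre}, which the paper introduces precisely ``to make possible a discretization of Morrey's theorem''. One interpolates $\varphi\Ep$ affinely on each subpolytope $F\Ep_{i,j}=\mathrm{Conv}(X\Ep_i,x\Ep_{i,j_1},\dots,x\Ep_{i,j_d})$, assigning to the isobarycenter $X\Ep_i$ the average of the values at the cell's nodes; using \eqref{eqlem2}, the nondegeneracy estimate \eqref{estiso} and the affine independence of the vertices, one gets $|\nabla\varphi\Ep|\leq C\sum_{l}\theta\Ep(x_l)$ on each subpolytope, hence $\|\nabla\varphi\Ep\|_{L^p(\Omega)}\leq C\|\theta\Ep\|_{\ep,p}$ by the uniform volume bounds of \ref{hyre}, and the continuous Morrey inequality applied to the interpolant gives the H\"older estimate at the nodes. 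If you want to keep your route, the honest statement of what remains is: construct, for each pair of nodes, a discrete path staying within $C\ep$ of the segment joining them and with length comparable to the distance, and prove the multiplicity estimate $m(w)\leq C R^d/(\ep r^{d-1})$; the interpolation argument sidesteps exactly this geometric work.
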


\bpr
The idea is to linearly interpolate $\vp\Ep$ in order to have a function in $W^{1,p}(\Omega)$ and then to apply Morrey's inequality. 
By recalling \ref{hyre}, let $V\Ep= \text{Conv}(x_1\Ep, ..., x_L\Ep)$ a polytope in the discrete network  $\Oep$ where for $k=1, \dots, L$, $x_k\Ep$ is an neighbor of $x\Ep_{k+1}$ in $N\Ep$, the indices being taken modulo $L$. Let us denote $X\Ep$ the isobarycenter of all these nodes (it is in the interior of $V\Ep$ by assumption) and let us define
\[
\varphi\Ep(X\Ep)= \sum_{k=1}^L \frac{\varphi\Ep(x_k\Ep)}{L}.
\]
Let $\{F_j\Ep\}$ denote the subpolytopes given by \ref{hyre} for $V\Ep$ with $F_j\Ep = \text{Conv}(X\Ep, X\Ep_{j_1}, \dots, X\Ep_{j_d})$ and these $(d+1)$ points that are linearly independent.  Then for every $x$ in $V\Ep$, there exists $j$ such that $x \in F_j\Ep$. $x$ is a conical combination of $X\Ep, X\Ep_{j_1}, \dots, X\Ep_{j_d}$. There exists some unique nonnegative coefficients $\lambda, \lambda_1, \dots, \lambda_d \geq 0$ such that $x= \lambda X\Ep + \sum_{i=1}^d \lambda_i X\Ep_{j_d}$. We set 
\beq \label{intlin}
\vp\Ep(x)= \lambda \vp(X\Ep) + \sum_{i=1}^d \lambda_i \vp \left (X\Ep_{j_d} \right ).
\eeq
We still denote this interpolation by $\vp\Ep$. We then have $\vp\Ep \in W^{1,p}(\Omega)$ with $\| \nabla \varphi\Ep \|_p \leq C \| \theta\Ep \|_{\ep, p}$ (computing $\nabla \varphi\Ep$ is technical and we detail it below). We conclude thanks to Morrey's inequality.


\textbf{Computing} $\nabla \varphi\Ep$: 

We take the notations used in the above proof but we remove the $\ep$-dependence for the sake of simplicity. Taking into account the construction of $\vp\Ep$, we will compute $\nabla \varphi\Ep$ on a subpolytope $F= \text{Conv}(X, X_1, ..., X_d)$ of $V$ where $X$ is the isobarycenter of all nodes $x_k$ in $V\Ep$ and $X, X_1, \dots, X_d$ are linearly independent. We rearrange the $x_k$'s such that $x_k$ is a neighbor of $x_{k+1}$. Let $\mathcal{H}$ be the affine hyperplane  
\[
\mathcal{H} = \left < \left( \begin{array}{c} X \\ \vp\Ep(X) \end{array} \right),  \left( \begin{array}{c} X_1 \\ \vp\Ep(X_1) \end{array} \right), \dots, \left( \begin{array}{c} X_d \\ \vp\Ep(X_d) \end{array} \right) \right >
\] 
i.e. the affine subspace containing $(X, \vp\Ep(X))$ and directed by the vector space generated by the vectors $(X_1-X, \vp\Ep(X_1) - \vp\Ep(X)), \dots, (X_d-X, \vp\Ep(X_d) - \vp\Ep(X))$. It is a hyperplane since the points $X, X_1, \dots, X_d$ are linearly independent. Then there exists some constants $a_0,\dots , a_{d+1}$ such that $\mathcal{H} = \{ (z_1, \dots, z_{d+1}) \in \RR^{d+1}; a_1 z_1 + \dots + a_{d+1} z_{d+1} + a_0 = 0 \}$ with $a_{d+1} \neq 0$ (otherwise $X, X_1, \dots, X_d$ would not be independent). The normal vector to the hyperplane is $\vec{n} = (a_1, \dots, a_{d+1})$ and 
\[
\nabla \vp\Ep = \left( \begin{array}{c} - a_1/a_{d+1} \\  \vdots \\ - a_d/a_{d+1} \end{array} \right).
\]
Without loss of generality we assume $X=0, X_1 = (y_1, 0), X_2 = (y_1^2, y_2, 0), \dots, X_d= (y_1^d, \dots, y_{d-1}^d, y_d)$, with $y_1, \dots, y_d \neq 0$. Then we have for $k=1, \dots, d$
\[
\left( \begin{array}{c} X_k - X \\ \vp\Ep(X_k) - \vp\Ep(X) \end{array} \right) \cdot \vec{n} = 0 = \sum_{i < k} a_i y_i^k + a_k x_k + a_{d+1} (\vp\Ep(X_k) - \vp\Ep(X)),
\]
i.e. 
\beq \label{ak}
- \frac{a_k}{a_{d+1}} = \sum_{i < k} \frac{a_i}{a_{d+1}} \frac{y_i^k }{y_d} + \frac{\vp\Ep(X_k) - \vp\Ep(X)}{y_k}.
\eeq
We must now find an estimate on each of these terms. First for $k=1, \dots, d$, there exists $i_k \in \{1, \dots, L \}$ such that $X_k = x_{i_k}$ and it follows from \eqref{eqlem2} that we have 
\begin{align*}
|\vp\Ep(X_k) - \vp\Ep(X)| & = \left | \sum_{l=1}^L \frac{L-l+1}{L} (\vp\Ep(x_{l+i_k-1})) - \vp\Ep(x_{l+i_k})) \right | \\
& \leq \ep \sum_{l=1}^L \theta\Ep (x_l)
\end{align*}
with the indices taken modulo $L$. Moreover, due to \eqref{estiso}, for $k=1, \dots, d$ and $i<k$, we have $|y_i^k| \leq |X-X_k| \leq C \ep.$ Taking into account the fact that the $X_k$'s are linearly independent, we get $|y_k| \geq C \ep$. Then from \eqref{ak} it follows for $k=1, \dots, d$
\beq \label{a/c}
\left | \frac{a_k}{a_{d+1}} \right | \leq C \sum_{i<k} \left | \frac{a_i}{a_{d+1}} \right | + C \sum_{l=1}^L \theta\Ep (x_l).
\eeq
We finally obtain by an induction on $k$ that
\[
\left | \frac{a_k}{a_{d+1}} \right | \leq C \sum_{l=1}^L \theta\Ep (x_l)
\]
so that due to \eqref{intlin}, $|\nabla \varphi\Ep(x) | \leq C \sum_{l=1}^L \theta\Ep (x_l)$ for every $x$ in the subpolytope $F$. 
We finally conclude with \ref{hyre} that  
 \[
 \| \nabla \varphi\Ep \|_p \leq C \| \theta\Ep \|_{\ep, p},
 \]
 which completes the proof. 
\epr

The discretization of the Morrey inequality is crucial since we may now extend $c\Ep$ on $\Omega \times \Omega$. For every $(x, y) \in N\Ep \times N\Ep$, we define 
\beq \label{cEp}
c\Ep (x,y) = \min_{\sigma \in C_{x,y}^{\ep}} \sum_{(z,e) \subset \sigma} |e| \xep (z,e) = \inf_{\sigma \in C_{x,y}^{\ep}} \int_0^1 \Psi\Ep(\tilde{\sigma}(t), \dot{\tilde{\sigma}}(t)) \cdot \boldsymbol{\xep}(\tilde{\sigma}(t)) dt .
\eeq
By definition, if $x_0 \in \Omega_{\ep}$ and $x$ and $y$ neighbors in $\Oep$, we have
\[
c\Ep(x_0, x) \leq c\Ep (x_0, y) + \ep \max_{ e / (y,e) \in \Eep} \xep(y, e). 
\]
Since $\| \xep \|_{\ep, p}$ is bounded, we deduce from \ref{dismor} that there exists a constant $C$ such that for every $\ep > 0$ we have
\[
| c\Ep(x,y) - c\Ep(x_0, y_0) | \leq C (|x - x_0|^{\beta} + |y - y_0|^{\beta} ), \: \forall \: (x, y, x_0, y_0) \in {N\Ep}^4.
\] 
We can then extend $c\Ep$ to the whole $\ObOb$ (we still denote by $c\Ep$ this extension) by
\[
c\Ep(x, y) := \sup_{(x_0, y_0) \in \Oep \times \Oep} \{ c\Ep (x_0, y_0) - C (|x- x_0|^{\beta} + |y - y_0|^{\beta} ) \}, \: \forall \:  (x, y) \in \ObOb. 
\]
By construction, $c\Ep$ still satisfy the uniform H\"older estimate on the whole $\ObOb$ and since $c\Ep$ vanishes on the diagonal of $\ObOb$, it follows from Arzela-Ascoli theorem that the family $(c\Ep)_{\ep} $ is relatively compact in $C(\ObOb)$. Up to a subsequence, we may therefore assume that there is some $c \in C(\ObOb)$ such that 
\beq \label{4.8}
c\Ep \rightarrow c \text{ in } C(\ObOb) \: \text{ and } c(x,x)=0  \text{ for every } x \in \Ob.
\eeq 
From \ref{hy1} it may be concluded that
\[
I_1\Ep (\xep) \leq \sum_{(x,y) \in N\Ep \times N\Ep} \ep^{\frac{d}{2}-1} c\Ep(x,y) \gamma\Ep(x,y) \rightarrow \IObOb c d\gamma.
\]
In consequence, with \ref{lem4.1}, it remains to prove $c \leq \cbx$ on $\ObOb$. We will show that $c$ is a sort of subsolution in a highly weak sense of an Hamilton-Jacobi equation and we will then conclude by some comparison principle. The end of this paragraph provides a proof of this inequality.

\begin{lemme} \label{lem3}
Let $x_0 \in \Omega, \: \xi \in L_+^p(\theta)$ and $\varphi \in W^{1,p}(\Omega)$ such that $\varphi(x_0) = 0$ (which makes sense since $p>d$ so that $\vp$ is continuous). If for a.e. $x \in \Omega$ one has
\begin{equation} \label{eqlem} 
\nabla \varphi (x) \cdot u \leq \Phi_{\xi}(x,u) = \inf_{X \in A_x^u} \left ( \sum_{k=1}^N x_k \xi(x, v_k(x)) \right ) \text{ for all } u \in \mathbb{R}^d 
\end{equation}
then $\varphi \leq \cbx(x_0,\cdot) \text{ on } \Omega$. 
\end{lemme}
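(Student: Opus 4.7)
The plan is to mollify $\varphi$ to work with continuous data, integrate along admissible curve-decomposition pairs, and then pass to the limit using \ref{convcxin} together with the equicontinuity provided by \ref{prop3.2}.

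A preliminary observation simplifies the hypothesis: \eqref{eqlem} is equivalent to the pointwise inequalities
\[
\nabla \varphi(x) \cdot v_k(x) \leq \xi(x, v_k(x)) \text{ for a.e. } x \in \Omega \text{ and every } k = 1, \dots, N.
\]
The direct implication follows by taking $u = v_k(x)$ with the trivial decomposition $Y = e_k \in A_x^{v_k(x)}$; the reverse follows from $\nabla \varphi(x) \cdot u = \sum_k y_k \nabla \varphi(x) \cdot v_k(x) \leq \sum_k y_k \xi(x, v_k(x))$ for any $Y = (y_k) \in A_x^u$ and taking the infimum over such $Y$.

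Extend $\varphi$ to a neighborhood of $\overline{\Omega}$ (using the smoothness of $\partial \Omega$) and set $\varphi_\delta := \varphi * \rho_\delta$ for a standard mollifier $\rho_\delta$. Convolving the pointwise inequality and exploiting the H\"older regularity $v_k \in C^{0, \alpha}$ from \ref{hy5} to control the difference $v_k(x) - v_k(x-z)$, one obtains on $\Omega_\delta := \{x \in \Omega : \mathrm{dist}(x, \partial \Omega) > \delta\}$
\[
\nabla \varphi_\delta(x) \cdot v_k(x) \leq \eta_k^\delta(x) := \bigl[\xi(\cdot, v_k(\cdot))\bigr] * \rho_\delta(x) + C \delta^\alpha (|\nabla \varphi| * \rho_\delta)(x).
\]
Since $\||\nabla \varphi|*\rho_\delta\|_{L^p} \leq \|\nabla \varphi\|_{L^p}$ and $\alpha > 0$, the remainder vanishes in $L^p$ as $\delta \to 0$, the $\eta_k^\delta$ are continuous, and $\eta_k^\delta \to \xi(\cdot, v_k(\cdot))$ in $L^p(\Omega, c_k(x)\,dx)$. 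Choosing a continuous extension $\hat \xi^\delta \in C(\overline{\Omega} \times \mathbb{S}^{d-1}, \mathbb{R}_+)$ with $\hat \xi^\delta(x, v_k(x)) = \eta_k^\delta(x)$ yields $\hat \xi^\delta \to \xi$ in $L^p(\theta)$.

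For $y \in \Omega_\delta$, any absolutely continuous $\sigma \in C_{x_0, y}$ remaining in $\Omega_\delta$, and any $\rho \in \mathcal{P}_\sigma$, the identity $\dot \sigma(t) = \sum_k \rho_k(t) v_k(\sigma(t))$ combined with the smoothness of $\varphi_\delta$ gives
\[
\varphi_\delta(y) - \varphi_\delta(x_0) = \int_0^1 \sum_{k=1}^N \rho_k(t) \nabla \varphi_\delta(\sigma(t)) \cdot v_k(\sigma(t))\, dt \leq L_{\hat \xi^\delta}(\sigma, \rho).
\]
Taking the infimum over $(\sigma, \rho)$ yields $\varphi_\delta(y) - \varphi_\delta(x_0) \leq c_{\hat \xi^\delta}(x_0, y)$. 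Letting $\delta \to 0^+$: by Morrey's inequality $\varphi$ is continuous (since $p > d$), so $\varphi_\delta \to \varphi$ uniformly on $\overline{\Omega}$ and $\varphi_\delta(x_0) \to \varphi(x_0) = 0$; by \ref{prop3.2} the family $(c_{\hat \xi^\delta})_\delta$ is bounded and equicontinuous on $\overline{\Omega} \times \overline{\Omega}$, so, up to a subsequence, it converges in $C(\overline{\Omega} \times \overline{\Omega})$ to some $c \in \mathcal{A}(\xi)$. Hence $\varphi(y) \leq c(x_0, y) \leq \cbx(x_0, y)$ for every $y \in \Omega$, the case $y \in \Omega \setminus \bigcup_\delta \Omega_\delta$ being handled by continuity.

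The main obstacle is the commutator estimate controlling the discrepancy between $v_k(x)$ and $v_k(x-z)$ after mollification: precisely the H\"older exponent $\alpha > d/p$ from \ref{hy5}, combined with the controlled conical decomposition in \ref{hy4}, is what ensures that the remainder is small enough in $L^p$ for the approximation $\hat \xi^\delta \to \xi$ to be genuine in $L^p(\theta)$, so that the chain-rule computation along curves can be carried out on a continuous surrogate of $\xi$.
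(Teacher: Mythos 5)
Your mollification strategy is the right general idea (it is essentially the paper's), and your preliminary reduction to the directional inequalities $\nabla\varphi\cdot v_k\leq \xi_k$ as well as the commutator estimate using $v_k\in C^{0,\alpha}$ are fine. But there is a genuine gap at the step ``taking the infimum over $(\sigma,\rho)$ yields $\varphi_\delta(y)-\varphi_\delta(x_0)\leq c_{\hat\xi^\delta}(x_0,y)$''. Your mollified inequality $\nabla\varphi_\delta\cdot v_k\leq \eta_k^\delta$ holds only on $\Omega_\delta$, so the chain-rule bound $\varphi_\delta(y)-\varphi_\delta(x_0)\leq L_{\hat\xi^\delta}(\sigma,\rho)$ is available only for curves that remain in $\Omega_\delta$. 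The infimum over that restricted class \emph{dominates} $c_{\hat\xi^\delta}(x_0,y)$, which by definition is an infimum over all absolutely continuous curves with values in $\overline\Omega$; so the inequality you need does not follow. Concretely, since you take an arbitrary continuous extension $\hat\xi^\delta$ near $\partial\Omega$ (where $\eta_k^\delta$ is neither defined nor controlled, $\xi$ not having been extended), a nearly optimal curve for $c_{\hat\xi^\delta}(x_0,y)$ may hug the boundary where $\hat\xi^\delta$ is small, making $c_{\hat\xi^\delta}(x_0,y)$ strictly smaller than $\varphi_\delta(y)-\varphi_\delta(x_0)$; and you cannot repair this by taking $\hat\xi^\delta$ huge on the $\delta$-layer without destroying the convergence $\hat\xi^\delta\to\xi$ in $L^p(\theta)$ (the layer has measure of order $\delta$ but $|\nabla\varphi_\delta|$ there is only bounded by $C\delta^{-d/q}$). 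The same lack of control on the boundary layer also undermines your claims that $\hat\xi^\delta\to\xi$ in $L^p(\theta)$ and that $(c_{\hat\xi^\delta})_\delta$ is bounded via \ref{prop3.2}.

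The paper closes exactly this hole \emph{before} mollifying: it extends $\varphi$ to $W^{1,p}(\RR^d)$ and extends $\xi$ outside $\Omega$ by setting $\xi(x,v):=|\nabla\varphi(x)|$, and checks that \eqref{eqlem} then continues to hold outside $\Omega$ (because any $U\in A_x^u$ with $|u|=1$ satisfies $|U|_1\geq 1$, so $\Phi_\xi(x,u)\geq|\nabla\varphi(x)|\geq\nabla\varphi(x)\cdot u$, and one concludes by homogeneity). After mollifying both the extended $\varphi$ and the extended $\xi$ on all of $\RR^d$, the directional inequalities hold on a full neighbourhood of $\overline\Omega$ up to a constant $\ep_n\to 0$ (this is where $\alpha>d/p$ enters), so the chain rule applies to \emph{every} curve in $C_{x_0,y}$ and one gets the genuine bound $\varphi_n\leq c_{\xi^n+\ep_n}(x_0,\cdot)$; the conclusion then follows from $\xi^n+\ep_n\to\xi$ in $L^p(\theta)$, the compactness given by \ref{prop3.2} and the definition \eqref{cbx} of $\cbx$ as a supremum. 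If you incorporate this extension step (or any other device making the directional inequalities valid up to and across $\partial\Omega$), your argument goes through.
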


\begin{Rem}
The above assumption \eqref{eqlem} is equivalent to : 
\[ \nabla \varphi (x) \cdot v_k(x) \leq \xi_k(x) = \xi(x, v_k(x)), \: \forall x \text{ a.e.}, \forall k=1, \dots ,N. \]
\end{Rem}

\begin{proof}
The result is immediate if $\varphi \in C^1(\overline{\Omega})$ and $\xi$ is continuous on $\overline{\Omega}$. Indeed, in this case, assumption \eqref{eqlem} is true pointwise and if $x \in \Omega$ and $\sigma$ is an absolutely continuous curve with values in $\overline{\Omega}$ connecting $x_0$ and $x$ then by the chain rule we obtain 
\[
\varphi(x)=\int_0^1 \nabla \varphi(\sigma(t)) \cdot \dot{\sigma}(t) dt \leq \int_0^1\Phi_{\ep}(\sigma(t), \dot{\sigma}(t)) \: dt
\]
and taking the infimum in $\sigma$ we get $\varphi \leq c_{\xi}(x_0, .)$ on $\Omega$ so that $\varphi \leq \bar{c}_{\xi}(x_0, .)$ due to \ref{lem1}.
For the general case, if $\varphi$ is only $W^{1,p}(\Omega)$ and $\xi$ only $L_+^p(\theta)$, we first extend $\varphi$ to a function in $W^{1,p}(\mathbb{R}^d)$ and we extend $\xi$ outside $\Omega$ by writing $\xi(x,v) =  |\nabla \varphi| (x)$ for every $x \in \RR^d, v \in \Ss$ so that if $x \in \mathbb{R}^d \backslash \Ob$ and $u \in \Ss$ we have
\begin{align*}
 \nabla \varphi (x) \cdot u & \leq | \nabla \varphi(x) |   \\
& \leq  |\nabla \varphi(x)| |U|_1 = \Phi_{\xi}(x,u),
\end{align*}
where $U=(u_1,...,u_N) \in A_x^u$ is a minimizer of $ \Phi_{\xi}(x,u)$ The fact $U \in A_x^u$ implies that $|u|=1 \leq |U|_1$. By homogeneity of \eqref{eqlem} in $u$, \eqref{eqlem} thus continues to hold outside $\Omega$ with the previous extensions. 
We then regularize $\varphi$ and $\xi$. Let us take a mollifying sequence $\rho_n(x)=n^d \rho(nx)$, $x \in \mathbb{R}^d$ where $\rho$ is a smooth nonnegative function supported on the unit ball and such that $\int_{\mathbb{R}^d} \rho = 1$. Set $\xi^n := \rho_n \star \xi$ and $\varphi_n := \rho_n \star \varphi - (\rho_n \star \varphi ) (x_0). $ Let $x \in \mathbb{R}^d $. Recalling \ref{hy4} (H\"older condition on the $v_k$'s), we have 
\begin{align*}
\nabla \varphi_n (x) \cdot v_k(x) & =\int_{\RR^d} \rho_n(y) \nabla \varphi (x-y) \cdot v_k(x) \: dy \\
& \leq \int_{\RR^d} \rho_n(y) \xi_k(x-y) \: dy \\
& + \int_{\RR^d} \rho_n(y) \nabla \varphi (x-y) \cdot (v_k(x)- v_k(x-y)) \: dy\\
& \leq \xi_k^n (x) + n^{d- \alpha} \| \rho \|_{\infty} \int_{B(0,1/n)} | \nabla \varphi (x-y) | \: dy \\
& \leq \xi_k^n (x) + C n^{d- \alpha - d/q} \| \rho \|_{\infty} \| \nabla \varphi \|_p \\
& = \xi_k^n (x) + \ep_n,
\end{align*}
where $\ep_n >0,$ $ \ep_n \rightarrow 0$ as $n \rightarrow \infty$ (since $\alpha > d/p $).
So by using the above remark and the previous case where $\vp$ and $\xi$ were regular, we have $\varphi_n \leq c_{\xi^n + \ep_n} (x_0, .)$ and from the convergence of $\varphi_n$ to $\varphi$ it follows that
\[
\varphi = \limsup \varphi_n \leq \limsup c_{\xi^n + \ep_n} (x_0, \cdot) \leq \overline{c}_{\xi}(x_0, \cdot),
\]
where the last inequality is given by the definition of $\overline{c}_{\xi}$ as a supremum \eqref{cbx} and the relative compactness of $c_{\xi^n + \ep_n}$ in $C(\ObOb)$.
\end{proof}

We want to apply \ref{lem3} to $c(x_0, \cdot)$ so that we need $c(x_0, \cdot) \in W^{1,p}(\Omega)$ for every $x_0 \in \Omega$. Let $(e_1, \dots, e_d)$ given by \ref{hy8}, $\vp \in C_c^1(\Omega)$ and $x\Ep_0 \in \Oep$ such that $|x_0-x\Ep_0| \leq \ep$. Using the uniform convergence of $c\Ep(x\Ep_0,\cdot)$ to $c(x_0,\cdot)$ and \ref{hy6}, for $\vp \in C_c^1(\Omega)$ and $i=1,\dots, d$ we have 
\begin{multline*}
T_i \vp := \IO c(x_0, x) \nabla \vp (x) \cdot e_i (x) \, dx \\
 =  \lim_{\ep \rightarrow 0^+} \sum_{ \sigma \in C\Ep_i} \sum_{k=0}^{L(\sigma)-1} |y_{k+1} - y_k|^d c\Ep(x_0\Ep, y_k) \frac{\vp(y_{k+1}) - \vp(y_k)}{|y_{k+1} - y_k|} 
 \end{multline*}
 where $\sigma = (y_0, \dots,  y_{L(\sigma)})$.
Then we can rearrange the sums as follows 
\begin{align*}
T_i \vp & =  \\
& \lim_{\ep \rightarrow 0^+} \sum_{ \sigma \in C\Ep_i} \left ( \sum_{k=1}^{L(\sigma)-1} \vp(y_k) \left ( |y_k - y_{k-1} |^{d-1} c\Ep (x_0, y_{k-1}) - | y_{k+1} - y_k |^{d-1} c\Ep (x_0, y_k) \right ) \right. \\
& + \vp \left (y_{L(\sigma)} \right ) |y_{L(\sigma)} - y_{L(\sigma)-1} |^{d-1} c\Ep (x_0, y_{L(\sigma)-1})
- \vp(y_0) | y_1 - y_0| ^{d-1} c\Ep (x_0, y_0) \Bigg ) \\
& =  \lim_{\ep \rightarrow 0^+} \sum_{ \sigma \in C\Ep_i}  \sum_{k=1}^{L(\sigma)-1} \vp(y_k) \left ( |y_k - y_{k-1} |^{d-1} c\Ep (x_0, y_{k-1}) - | y_{k+1} - y_k |^{d-1} c\Ep (x_0, y_k) \right )
\end{align*}
since for $\ep$ small enough, $y_0$ and $y_{L(\sigma)}$ are not in the support of $\vp$ thanks to \ref{hy8}.
For $\sigma \in C\Ep_i$, we thus have
\begin{align*}
 \sum_{k=1}^{L(\sigma)-1} \vp(y_k) & \left ( |y_k - y_{k-1} |^{d-1} c\Ep (x_0, y_{k-1}) - | y_{k+1} - y_k |^{d-1} c\Ep (x_0, y_k) \right ) \\
& =  \sum_{k=1}^{L(\sigma)-1} \left ( \vp(y_k) [ |y_k - y_{k-1}|^{d-1} ( c\Ep(x_0, y_{k-1}) - c\Ep(x_0, y_k)) \right. \\
&  + \left. c\Ep(x_0, y_k) ( |y_k - y_{k-1}|^{d-1} - |y_{k+1} - y_k|^{d-1} ) ] \right ) \\
& \leq C \ep^d \sum_{k=1}^{L(\sigma)-1} |\vp(y_k)|. 
\end{align*}
Indeed in the first term, we use the fact that if $x$ and $y$ are neighbors in $\Oep$ then 
\[
c\Ep(x_0,x) \leq c\Ep(x_0,y) + |x-y| \max \xep (y, \cdot)
\]
and we obtain the upper bound on the second term due to \ref{hy8}. 
Hence by using H\"older and the fact that $\| \xep \|_{\ep,p}$ is bounded, we obtain 
\[
\left | \IO c(x_0, \cdot) \nabla \vp \cdot e_i \right | \leq C \| \vp \|_{L^{p'}}, \: \forall \vp \in C_c^1(\Omega).
\]
This proves that $c(x_0, \cdot) \in W^{1,p}(\Omega)$. By a similar argument we obtain that $c(\cdot ,y_0) \in W^{1,p}(\Omega)$ for every $y_0 \in \Omega$.

\begin{lemme} \label{lem4}
Let $x_0 \in \Omega$ and $c$ be defined by \eqref{4.8}, one has
\begin{enumerate}
\item For every $w \in C_c^{\infty} (\Omega, \mathbb{R}^d)$, the following inequality holds
\begin{equation} \label{eqlem4}
\int_{\Omega} \nabla_x c(x_0,x) \cdot  w(x) \: dx \leq \IO \Phi_{\xi}(x, w(x)) \: dx.
\end{equation}
\item $c \leq \cbx$ and so one has the $\Gamma$-liminf inequality. 
\end{enumerate}
\end{lemme}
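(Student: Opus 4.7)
The starting point is the elementary discrete inequality
\[
c\Ep(x_0, x+e) \leq c\Ep(x_0, x) + |e|\, \xep(x,e), \qquad \forall (x,e) \in \Eep,
\]
obtained by prepending to the arc $(x,e)$ an optimal path from $x_0$ to $x$ in the definition \eqref{cEp} of $c\Ep$. Given $w \in C_c^{\infty}(\Omega,\RR^d)$, I would invoke \ref{hy4} together with a measurable selection argument to produce nonnegative measurable coefficients $(Y_k(x))_{k=1}^N$ satisfying $\sum_k Y_k(x) v_k(x) = w(x)$, $Y(x) \cdot \xig(x) = \Phi_{\xi}(x, w(x))$ a.e., and $|Y(x)| \leq C|w(x)|$; a smooth approximation then makes the discrete sums below well-posed.

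For part (1), I would multiply the discrete inequality by $|e|^{d-1} Y_k(x)/c_k(x)$ on each $(x,e)\in \Eep_k$ and sum over $k$ and over arcs. The right hand side,
\[
\sum_{k=1}^N \sum_{(x,e) \in \Eep_k} |e|^d \frac{Y_k(x)}{c_k(x)}\, \xep(x,e),
\]
converges, by \ref{hy6} combined with the weak $L^p$ convergence $\xep \to \xi$ from \ref{def1}, to $\sum_k \IO Y_k(x)\,\xi(x,v_k(x))\,dx = \IO \Phi_{\xi}(x,w(x))\,dx$. The left hand side is a weighted telescoping sum of differences of $c\Ep$ along arcs of directions close to $v_k$; a discrete summation by parts along the families of disjoint paths $C\Ep_i$ from \ref{hy8}, performed exactly as in the computation carried out just before this lemma to identify $\IO c(x_0,\cdot) \nabla \vp \cdot e_i$, allows one to recognize its limit as $\IO \nabla_x c(x_0,x) \cdot w(x)\,dx$. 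Passing to the limit yields \eqref{eqlem4}.

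From \eqref{eqlem4} one then extracts a pointwise statement. For fixed $u \in \RR^d$ and arbitrary $\lambda \in C_c^{\infty}(\Omega,\RR_+)$, specializing \eqref{eqlem4} to $w(x) = \lambda(x) u$ and invoking the positive $1$-homogeneity of $\Phi_{\xi}(x,\cdot)$ from \ref{lempre} produces
\[
\IO \lambda(x)\bigl(\nabla_x c(x_0,x) \cdot u - \Phi_{\xi}(x,u)\bigr)\,dx \leq 0,
\]
whence $\nabla_x c(x_0,x) \cdot u \leq \Phi_{\xi}(x,u)$ for a.e.\ $x\in\Omega$. Continuity of $\Phi_{\xi}(x,\cdot)$ in $u$ (\ref{lempre}) together with a countable dense subset of $\RR^d$ makes the exceptional null set independent of $u$. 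Since $c(x_0,\cdot) \in W^{1,p}(\Omega)$ with $c(x_0,x_0)=0$ (established in the discussion preceding this lemma and in \eqref{4.8}), \ref{lem3} applies and gives $c(x_0,\cdot)\leq \cbx(x_0,\cdot)$ on $\Omega$, and continuity extends this to $\Ob$.

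Combining $c\leq\cbx$ on $\ObOb$ with the convergence $I_1\Ep(\xep) \to \IObOb c\,d\gamma$ obtained above yields $\limsup_{\ep\to 0^+} I_1\Ep(\xep) \leq I_1(\xi)$, and this together with \ref{lem4.1} produces the $\Gamma$-liminf inequality \eqref{liminf}. The main technical obstacle lies in part (1): once the summation by parts is executed the left hand side only involves the uniformly convergent $c\Ep$ (with the limit $c(x_0,\cdot)$ merely in $W^{1,p}$, not $W^{1,\infty}$), so the identification of its limit with $\IO \nabla_x c\cdot w$ forces the full strength of \ref{hy8}, notably the quasi-conservation $||y_{k+1}-y_k|^{d-1}-|y_k-y_{k-1}|^{d-1}|=O(\ep^d)$ along each $\sigma \in C\Ep_i$ and the pairwise disjointness of those paths.
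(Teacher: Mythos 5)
Your overall plan is the one the paper follows: choose a measurable minimizing conical decomposition of $w$ (with the control coming from \ref{hy4}), mollify it, weight the discrete inequality $c\Ep(x_0\Ep,x+e)-c\Ep(x_0\Ep,x)\le |e|\,\xep(x,e)$ by the smoothed coefficients divided by $c_k$, pass to the limit on both sides, then deduce the pointwise inequality from \eqref{eqlem4} by homogeneity and conclude with \ref{lem3}. Your part (2) and the conclusion are essentially the paper's argument (the paper localizes at Lebesgue points while you use a countable dense set of directions and the continuity of $\Phi_{\xi}(x,\cdot)$ from \ref{lempre}; both work), and your treatment of the right-hand side limit is the same as the paper's.

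The step that does not go through as written is the identification of the limit of the left-hand sums with $\IO\nabla_xc(x_0,x)\cdot w(x)\,dx$ ``exactly as in the computation carried out just before this lemma''. That earlier computation sums, along each path $\sigma\in C\Ep_i$, a difference quotient of a \emph{single} smooth test function $\vp$ shared by all arcs of the path; the Abel summation telescopes precisely because consecutive arcs carry the same weight, and it is only used there to prove the bound $|T_i\vp|\le C\|\vp\|_{L^{p'}}$, i.e.\ the $W^{1,p}$ regularity of $c(x_0,\cdot)$. In your sum the weight on an arc $(x,e)\in\Eep_k$ is $Y_k^{\delta}(x)/c_k(x)$, which depends on $k$, while a path of $C\Ep_i$ interleaves arcs from all classes $\Eep_k$ with $\alpha_k^i=1$; the telescoping then produces differences of \emph{different} functions, and \ref{hy8} provides no path structure for each $\Eep_k$ separately, so the earlier argument does not transfer verbatim (nor does it by itself hand back a derivative on $c$ rather than on the test function). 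The paper instead argues direction by direction: using \ref{hy6}, the uniform convergence of $c\Ep(x_0\Ep,\cdot)$ and the already established $W^{1,p}$ regularity of $c(x_0,\cdot)$ (this is where \ref{hy8} has already been spent), it identifies, for each fixed $k$ and $\vp\in C_c^1(\Omega)$, the integral $\IO c_k\,\vp\,\nabla_xc(x_0,\cdot)\cdot v_k$ as the limit of $\sum_{(x,e)\in\Eep_k}|e|^{d-1}\bigl(c\Ep(x_0\Ep,x+e)-c\Ep(x_0\Ep,x)\bigr)\vp(x)$, and applies this with $\vp=\alpha_k^{\delta}/c_k$ before invoking the bound by $|e|\xep(x,e)$ and the weak convergence of \ref{def1}; replacing your path-based identification by this per-$k$ identity repairs your proof. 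A last, minor point: the construction of $c$ only yields $\limsup_{\ep\to0^+}I_1\Ep(\xep)\le\IObOb c\,d\gamma$ (since $|e|^{d/2}\le\ep^{d/2-1}|e|$), not the convergence you assert; this one-sided bound, combined with $c\le\cbx$ and \ref{lem4.1}, is exactly what the $\Gamma$-liminf inequality requires.
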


\begin{proof} 

\textbf{1.)} Let $\alpha(x)=(\alpha_1(x),...,\alpha_N(x))$ be a minimizing decomposition of $w(x)$ i.e. for all $x \in \Omega$
\[
\inf_{X \in A_x^{w(x)}} \sum_{k=1}^N x_k \xi_k(x) = \sum_{k=1}^N \alpha_k(x) \xi_k(x) 
\]
with of course $w(x) = \sum \alpha_k(x) v_k(x)$ and $\alpha_k(x) \geq 0$.
Then we have 
\[
 \int_{\Omega} \nabla_x c(x_0,x) \cdot  w(x) \: dx = \sum_{k=1}^N \int_{\Omega} \alpha_k(x) \:  \nabla_x c(x_0,x) \cdot  v_k(x) \: dx.
\]
However the $\alpha_k$'s are not necessarily smooth so we must regularize the $\alpha_k$ to pass to the limit. As usual, we consider a mollifying sequence $(\rho^{\delta})$ (with $\delta > 0$), write 
\[
\alpha_k^{\delta} = \rho^{\delta} \star \alpha_k \: \text{ and } \: w^{\delta} = \sum_{k=1}^N \alpha_k^{\delta} v_k
\] 
for $k=1,...,N.$ Hence we have
\[
\int_{\Omega} \nabla_x c(x_0, \cdot ) \cdot  w = \lim_{\delta \rightarrow 0} \int_{\Omega} \nabla_x c(x_0, \cdot ) \cdot  w^{\delta}.
\]
Let $x_0^{\ep} \in \Nep$ such that $|x_0 - x_0^{\ep}| \leq \ep $ so that we have the uniform convergence of $c^{\ep} (x_0^{\ep}, \cdot)$ to $c(x_0, \cdot)$. By using \ref{hy6}, for every $\varphi \in C_c^1 (\Omega)$, we know that for $k=1,...,N$,
\begin{multline*}
 \int_{\Omega} c_k(x) \varphi(x) \nabla_x c(x_0, x ) \cdot  v_k(x) \, dx \\
 = \lim_{\ep \rightarrow 0^+}  \sum_{(x,e) \in \Eep_k} |e|^d \frac{c^{\ep}(x_0^{\ep},x+e) - c^{\ep}(x_0^{\ep},x)}{|e|} \varphi (x).
\end{multline*}
So we may write for a fixed $\delta$ 
\[
\int_{\Omega} \nabla_x c(x_0, \cdot ) \cdot  w^{\delta} =  \lim_{\ep \rightarrow 0^+} \sum_{k=1}^N \sum_{(x,e) \in \Eep_k} |e|^d \frac{c^{\ep}(x_0^{\ep},x+e) - c^{\ep}(x_0^{\ep},x)}{|e|} \frac{\alpha_k^{\delta}(x)}{c_k(x)}.
\]
Since $c^{\ep}(x_0^{\ep}, x+e) - c^{\ep}(x_0^{\ep},x) \leq |e| \xep (x, e) $, we obtain
\begin{align*} 
\int_{\Omega} \nabla_x c(x_0, \cdot ) \cdot  w^{\delta} & \leq  \lim_{\ep \rightarrow 0^+}  \sum_{(x,e) \in \Eep_k} |e|^d  \xep (x,e)  \frac{\alpha_k^{\delta}(x)}{c_k(x)} \\
& = \IO \alpha_k^{\delta} \xi_k.
\end{align*}

Passing to the limit in $\delta \rightarrow 0$, we finally get
\begin{align*}
\int_{\Omega} \nabla_x c(x_0, \cdot ) \cdot  w & \leq \sum_{k=1}^N \int_{\Omega} \alpha_k \: \xi_k \\
& = \int_{\Omega} \inf_{X \in A_x^{w(x)}} \left ( \sum_{k=1}^N x_k \xi(x, v_k(x)) \right ) \: dx. 
\end{align*}

\textbf{2.)} First, using \eqref{eqlem4} with $w = \theta v$ for $v \in C_c^{\infty} (\Omega, \RR^d)$ and an arbitrary scalar function $\theta \in C_c^{\infty}(\Omega, \RR), \theta \geq 0$, we deduce from the homogeneity of $z \mapsto \Phi_{\xi}(x,z)$ that
\beq \label{eqbislem4}
\nabla_x c(x_0,x) \cdot  v(x) \leq \Phi_{\xi}(x, v(x)), \text{ a.e. on } \Omega. 
\eeq
Now let $x$ be a Lebesgue point of both $\xi$ and $\nabla_x c(x_0, .)$, $u \in \Ss$ and take $v \in C_c^{\infty}(\Omega, \RR^d)$ such that $v=u$ in some neighbourhood of $x$. By integrating inequality \eqref{eqbislem4} over $B_r(x)$, dividing by its measure and letting $r \rightarrow 0^+$ we obtain  
\[
\nabla_x c(x_0,x) \cdot  u \leq \Phi_{\xi}(x, u), \text{ a.e. on } \Omega. 
\]
From \ref{lem3} the desired result follows.
\end{proof}

\subsection{The $\Gamma$-limsup inequality} 

Given $\xi \in L_+^p (\theta)$, we now prove the $\Gamma$-limsup inequality that is there exists a family $\xep \in \RR_+^{\# \Eep}$ such that
\beq \label{4.15}
\xep \rightarrow \xi \: \text{ and } \: \limsup_{\ep \rightarrow 0^+} J\Ep (\xep) \leq J(\xi).
\eeq
First show that for $\xi$ continuous and then a density argument will allow us to treat the general case. 

\textbf{Step 1 :} The case where $\xi$ is continuous. \\
For every $\ep > 0, (x,e) \in \Eep,$ write
\[
\xep (x, e) = \xi \left ( x, \frac{e}{|e|} \right).
\]
We have
\[
 \| \xep \|_{\ep, p} \rightarrow \|\xi\|_p \: \text{ and } \: I_0\Ep (\xep) \rightarrow I_0(\xi) \text{ as } \ep \rightarrow 0^+.
\]
In particular, for $\ep > 0$ small enough, $\| \xep \|_{\ep, p} \leq 2 \|\xi\|_p$ and $\xep \rightarrow \xi$ in the weak sense of \ref{def1}. We can proceed analogously to the construction \eqref{4.8} of $c$ for the $\Gamma$-liminf. We define $c\Ep$ on the whole of $\ObOb$ in a similar way and we also have the uniform convergence of $c\Ep$ to some $c$ in $C(\ObOb)$ (passing up to a subsequence) and $\liminf_{\ep \rightarrow 0^+} I_1\Ep(\xep) = \IObOb c d\gamma$ so that to prove \eqref{4.15} it is sufficient to show that $c \geq c_{\xi} = \cbx$. To justify this inequality it is enough to see that by construction for $(x,y) \in \Nep \times \Nep$ one has 
\[
c\Ep (x,y) = \inf_{\sigma \in C_{x,y}^{\ep}} \int_0^1 \Psi\Ep(\tilde{\sigma}(t), \dot{\tilde{\sigma}}(t)) \cdot \boldsymbol{\xep}(\tilde{\sigma}(t)) dt \geq c_{\xi}(x,y)
\]
using the uniform convergence of $c\Ep$ to $c$ we indeed obtain $c \geq c_{\xi} = \cbx$.

\textbf{Step 2 :} the general case where $\xi$ is only $L_+^p (\theta)$. \\
Let $\xi_n \in C(\ObS, \RR_+)$ such that
\[
\|\xi-\xi_n\|_p + \| c_{\xi_n} - \cbx \|_{\infty} + |I_0(\xi_n) - I_0(\xi) | \leq \frac{1}{n}
\]
and 
\[
\| \xi_n \|_p \leq 2 \| \xi \|_p
\]
(existence is given by \ref{convcxin}). For every $n > 0, \ep > 0$ there exists $\xep_n \in \RR_+^{\#\Eep}$ such that $\xep_n \rightarrow \xi_n$. Then there exists a nonincreasing sequence $\ep_n > 0$ converging to $0$ such that for every $0 < \ep < \ep_n$ we have
\[
|I_0\Ep(\xep_n) - I_0(\xi_n)| \leq \frac{1}{n}, \: I_1\Ep(\xep_n) \geq I_1(\xi_n) - \frac{1}{n} \: \text{ and } \: \| \xep_n \|_{\ep,p} \leq 2 \| \xi_n \|_p.
\] 
For $\ep > 0$, let $n_{\ep} = \sup \{ n ; \ep_n \geq \ep \}$ and $\xep = \xep_{n_{\ep}}$ then we get $\xep \rightarrow \xi$ ($\| \xep \|_{\ep, p} \leq 2 \| \xi_n \|_p \leq 4 \| \xi \|_p$) as well as 
\[
|I_0\Ep(\xep) - I_0(\xi)| \leq \frac{2}{n_{\ep}}  \rightarrow 0 \text{ as } \ep \rightarrow 0^+
\]
and
\[
I_1\Ep(\xep) \geq I_1(\xi_{n_{\ep}}) - \frac{1}{n_{\ep}} = \IOO c_{\xi_{n_{\ep}}} d \gamma - \frac{1}{n_{\ep}}.
\]
Since $c_{\xi_{n_{\ep}}}$ converges to $\cbx$, we then have
\[
\liminf I_1(\xep) \geq I_1(\ep)
\] 
which completes the proof.
 
 \section{Optimality conditions and continuous Wardrop equilibria} \label{sect5}
 
Now we are interested in finding optimality conditions for the limit problem:
 \begin{equation} \label{pc2}
 \inf_{\xi \in L_+^p(\theta)} J(\xi) := \IOS H(x, v, \xi(x, v)) \: \theta(dx, dv) - \int_{\overline{\Omega} \times \overline{\Omega}} \cbx \: d\gamma,
  \end{equation}
 through some dual formulation that can be seen in terms of continuous Wardrop equilibria. More precisely, it is in some sense the continuous version of the discrete minimization problem subject to the mass conservation conditions \eqref{cons1}-\eqref{cons2}.
Write
\[
\mathcal{L} = \{ (\sigma, \rho) : \sigma \in W^{1,\infty}([0,1], \Ob), \rho \in \mathcal{P}_{\sigma} \cap L^{\infty}([0,1])^N\},
\] 
  where
  \[ 
  \mathcal{P}_{\sigma} = \left \{ \rho : t \in [0,1] \mapsto \rho(t) \in \RR_+^N : \dot{\sigma}(t) = \sum_{k=1}^N v_k(\sigma(t)) \: \rho_k (t) \text{ a.e.} t \right \}.
  \]  
 We consider $\mathcal{L} $ as a subset of $C([0,1], \RR^d) \times L^1([0,1])^N$ i.e. equipped with the product topology, that on $C([0,1], \RR^d)$ being the uniform topology and that on $L^1([0,1])^N$ the weak topology. 
Slightly abusing notations, let us denote $\mathcal{M}_+^1(\mathcal{L})$ the set of Borel probability measures $Q$ on $C([0,1], \RR^d) \times L^1([0,1])^N$ such that $Q(\mathcal{L}) = 1$. For $\sigma \in W^{1,\infty}([0,1], \Ob)$, let us denote by $\tilde{\sigma}$ the constant speed reparameterization of $\sigma$ belonging to $W^{1,\infty}([0,1], \Ob)$ i.e. for $t \in [0,1], \tilde{\sigma}(t) = \sigma( s^{-1}(t)),$ where
\[
s(t) = \frac{1}{l(\sigma)}\int_0^t |\dot{\sigma}(u)| \: du \text{ with } l(\sigma) = \int_0^1 |\dot{\sigma}(u)| \: du.
\]
Likewise for $\rho \in \mathcal{P}_{\sigma} \cap L^{\infty}([0,1])^N$, let $\tilde{\rho}$ be the reparameterization of $\rho$ i.e. 
\[
\tilde{\rho}_k(t) := \frac{l(\sigma)}{| \dot{\sigma}(s^{-1}(t))|} \rho_k(s^{-1}(t)), \forall t \in [0,1], k=1, \ldots, N.
\]
We have $\tilde{\rho} \in \Pc_{\tilde{\sigma}} \cap L^{\infty}([0,1])^N$ with $\| \tilde{\rho} \|_{L_1} = \| \rho \|_{L^1}$. Define
\[
\Lct := \{ (\sigma, \rho) \in \Lc : |\dot{\sigma}| \text{ is constant} \} = \{ (\tilde{\sigma}, \tilde{\rho}), (\sigma, \rho) \in \Lc \}. 
\]
Let $Q \in \mathcal{M}_+^1(\Lc)$, we define $\widetilde{Q} \in \mathcal{M}_+^1(\Lct)$ as the push forward of $Q$ through the map $(\sigma, \rho) \rightarrow (\tilde{\sigma}, \tilde{\rho})$.  
Then let us define the set of probability measures on generalized curves that are consistent with the transport plan $\gamma$ :
\beq \label{Qg}
\mathcal{Q}(\gamma) := \{Q \in \mathcal{M}_+^1(\mathcal{L}) : (e_0, e_1)_{\#}Q= \gamma \},
\eeq 
where $e_0$ and $e_1$ are evaluations at time $0$ and $1$ and $(e_0, e_1)_{\#}Q$ is the image measure of $Q$ by $(e_0, e_1)$.
Thus $Q \in \mathcal{Q}(\gamma)$ means that
\[
\int_{\mathcal{L}} \varphi(\sigma(0), \sigma(1)) \: dQ(\sigma,\rho) := \int_{\overline{\Omega} \times \overline{\Omega}} \varphi(x,y) \: d\gamma(x,y), \: \: \forall \varphi \in C(\mathbb{R}^d \times \mathbb{R}^d, \mathbb{R}).
\]
This is the continuous analogue of the mass conservation condition \eqref{cons1} since $Q$ plays the same role as the paths-flows in the discrete model. Let us now write the analogue of the arc flows induced by $Q \in \mathcal{Q}(\gamma)$; for $k=1,\dots,N$ let us define the nonnegative measures on $\ObS$, $m_k^Q$ by
\[\int_{\ObS} \varphi(x,v) \: dm_k^Q(x,v) = \int_{\mathcal{L}} \left ( \int_0^1 \varphi(\sigma(t),v_k(\sigma(t))) \rho_k(t) dt \right ) dQ(\sigma, \rho),  \]
for every $\varphi \in C(\ObS, \mathbb{R}).$ 
Then the nonnegative measure on $\ObS$ $m^Q= \sum_{k=1}^N m_k^Q$ may be defined by 
\beq \label{mQ}
\int_{\ObS} \xi dm^Q = \int_{\mathcal{L}} L_{\xi}(\sigma, \rho) \: dQ(\sigma, \rho), \forall \xi \in C(\ObS, \RR_+)
\eeq
where for every $(\sigma, \rho) \in \Lc$, 
\beq \label{5.2}
L_{\xi}(\sigma, \rho) = \sum_{k=1}^N \int_0^1 \xi(\sigma(t),v_k(\sigma(t))) \rho_k(t) \: dt = \int_0^1 \boldsymbol{\xi} (\sigma(t)) \cdot \rho(t) \: dt,
\eeq 
with 
\[
\boldsymbol{\xi} (\sigma(t)) = (\xi(\sigma(t),v_1(\sigma(t))), \dots, \xi(\sigma(t),v_N(\sigma(t)))). 
\]

Notice that $\Lxsp = L_{\xi}(\tilde{\sigma}, \tilde{\rho})$ for every $(\sigma, \rho) \in \Lc$ and so $m^{\tilde{Q}} = m^Q$ for every $Q \in \mathcal{M}_+^1(\Lc)$.
The $p$ growth asumption \eqref{H3} on $H(x,v,\cdot)$ can be reformulated by a $q = p/(p-1)$ growth on $G(x,v,\cdot)$. To be more precise, we will assume that $g(x,v,\cdot)$ is continuous, positive and increasing in its last argument (so that $G(x,v,\cdot)$ is strictly convex) such that there exists $a$ and $b$ such that $0 < a \leq b$ and 
\begin{equation} \label{qcroi}
am^{q-1} \leq g(x,v,m) \leq b(m^{q-1}+1) \: \forall \, (x,v,m) \in \ObS \times \mathbb{R}_+, 
\end{equation}
with $q \in ( 1, d/(d-1) )$.
Then let us define
\beq \label{Qqg}
\mathcal{Q}^q(\gamma) := \{ Q \in \mathcal{Q}(\gamma) \: : \: m^Q \in L^q(\OS, \theta) \}
\eeq
and assume
\beq \label{Qqg}
\mathcal{Q}^q(\gamma) \neq \emptyset.
\eeq
This assumption is satisfied for instance when $\gamma$ is a discrete probability measure on $\ObOb$ and $q < d/(d-1)$. Indeed, first for $Q \in \Mc_+^1(W^{1,\infty}([0,1], \Ob))$, let us define $i_Q \in \Mc_+(\Ob)$ as follows 
\[
\IO \vp \: di_Q = \int_{W^{1,\infty}([0,1], \Ob)} \left ( \int_0^1 \vp(\sigma(t)) |\dot{\sigma}(t)| dt \right ) dQ(\sigma) \text{ for } \vp \in C(\Ob, \RR).
\]
It follows from \cite{benmansour2009numerical} that there exists $Q \in \Mc_+^1(W^{1,\infty}([0,1], \Ob))$ such that $(e_0, e_1)_{\#} Q = \gamma$ and $i_Q \in L^q$. For each curve $\sigma$, let $\rho^{\sigma} \in \Pc_{\sigma}$ such that $\sum_k \rho_k^{\sigma}(t) \leq C |\dot{\sigma}(t)|$ (we have the existence thanks to \ref{hy4}). Then we write $\Qb = {(id, \rho^{\cdot})}_{\#}Q$. We obtain $\Qb \in \mathcal{Q}^q(\gamma)$ so that we have proved the existence of such kind of measures. 

Let $Q \in \Qc^q(\gamma)$ and $\xi$ and $\tilde{\xi}$ be in $C(\ObS, \RR_+)$, we have
\begin{align*}
\IL \left |\Lxsp - L_{\tilde{\xi}} (\sigma, \rho) \right | d Q (\sigma, \rho) 
& = \IL \left | \int_0^1 ( \boldsymbol{\xi}(\sigma(t)) - \tilde{\boldsymbol{\xi}}(\sigma(t)) ) \cdot \rho(t) \: dt \right | dQ(\sigma, \rho) \\
& \leq \IOS \left |  \xi - \tilde{\xi} \right | m^Q \: \theta(dx, dv) \\
& \leq \| \xi - \tilde{\xi} \|_{L^p(\theta)} \| m^Q \|_{L^q(\theta)}.
\end{align*}
So if $\xi \in L_+^p(\theta)$ and $(\xi_n)_n$ is a sequence in $C(\ObS, \RR_+)$ that converges in $L^p(\theta)$ to $\xi$ then $L_{\xi_n}$ is a Cauchy sequence in $L^1(\Lc, Q)$ and its limit (that we continue to denote by $L_{\xi}$) does not depend on the approximating sequence $(\xi_n)_n$. This suggests us to define $L_{\xi}$ in an $L^1(\Lc, Q)$ sense for every $\xi \in L_+^p(\theta)$ and $Q \in \mathcal{Q}^q(\gamma)$. For every $\xi \in L_+^p(\theta)$ and $Q \in \mathcal{Q}^q(\gamma)$, by proceeding as for Lemma $3.6$ in \cite{carlier2008optimal}, we have
\beq \label{5.6}
\IOS \xi \cdot m^Q \: \theta(dx, dv) = \IL \Lxsp \: d Q(\sigma, \rho),
\eeq
and
\beq \label{5.6b} 
\cbx (\sigma(0), \sigma(1)) \leq L_{\xi}(\sigma, \rho) \: \text{ for } Q-\text{a.e. } (\sigma, \rho) \in \Lc.
\eeq
Hence using the fact that $Q \in \mathcal{Q}^q (\gamma)$ and \eqref{5.6}-\eqref{5.6b}, we obtain
\beq \label{5.8}
\IObOb \cbx d\gamma = \IL \cbx(\sigma(0), \sigma(1)) \: dQ(\sigma, \rho) \leq \IOS \xi \cdot m^Q.
\eeq
Let $\xi \in L_+^p(\theta)$ and $Q \in \mathcal{Q}^q(\gamma)$, it follows from Young's inequality that 
\begin{multline} \label{5.7}
\IOS H(x, v, \xi(x,v)) \: \theta(dx, dv) \\
 \geq \IOS \xi \cdot m^Q \: \theta(dx, dv) - \IOS G \left (x, v, m^Q(x,v) \right ) \: \theta(dx, dv)
\end{multline}    
so that we have
\beq \label{5.9}
\inf_{\xi \in L_+^p(\theta)} J(\xi) \geq \sup_{Q \in \mathcal{Q}^q(\gamma)} - \int_{\OS} G \left (x, v, m^Q(x,v) \right ) \: \theta (dx, dv). 
\eeq
The dual formulation of \eqref{pc2} then is
\begin{equation} \label{pc3}
\sup_{Q \in \mathcal{Q}^q(\gamma)} - \int_{\OS} G \left (x, v, m^Q(x,v) \right ) \: \theta (dx, dv). 
\end{equation}
We can note the analogy between \eqref{pc3} and the discrete problem that consists in minimizing \eqref{P1} subject to the mass conservation conditions \eqref{cons1}-\eqref{cons2}.
Then we establish the following theorem, that specifies relations between \eqref{pc3} and \eqref{pc2} and that  gives the connection with Wardrop equilibria:
\begin{theo} \label{th2}
\begin{enumerate}
Under assumptions \eqref{qcroi} and \eqref{Qqg}, we have:
\item \eqref{pc3} admits solutions.
\item $\Qb \in \mathcal{Q}^q(\gamma)$ solves \eqref{pc3} if and only if 
\beq \label{eqQ}
\int_{\mathcal{L}} L_{\xi_{\overline{Q}}}(\sigma, \rho) \: d \overline{Q} (\sigma, \rho) = \int_{\mathcal{L}} \overline{c}_{\xi_{\overline{Q}}}(\sigma(0), \sigma(1) ) \: d \overline{Q} (\sigma, \rho)
\eeq
where $\xi_{\overline{Q}}(x, v) = g \left (x,v, m^{\overline{Q}}(x,v) \right ) $. 
\item Equality holds : $\inf \eqref{pc2} = \sup \eqref{pc3}$. Moreover if $\overline{Q}$ solves \eqref{pc3} then $\xi_{\overline{Q}}$ solves \eqref{pc2}.
\end{enumerate}
\end{theo}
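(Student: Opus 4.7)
The proof splits naturally into the three items, which I attack in order.

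\textbf{Item 1 (existence for \eqref{pc3}).} I take a maximizing sequence $\Qb_n \in \Qc^q(\gamma)$. The lower bound in \eqref{qcroi} integrates to $G(x,v,m) \geq c_1 m^q - c_2$, so boundedness of the objective values forces $\|m^{\Qb_n}\|_{L^q(\theta)} \leq C$. Reparameterizing to constant speed via $\Qb_n \mapsto \widetilde{\Qb_n}$ preserves $m^Q$ while placing the curves in a uniformly Lipschitz class with values in the compact set $\Ob$. Because weak-$L^1$ on the $\rho$-component is not Polish, Prokhorov does not apply directly to $\Qc^q(\gamma)$; I pass to a Young-measure relaxation on $\Ob \times \RR_+^N$ (the route flagged in the introduction), where tightness follows from the $L^q$ bound on $m^Q$ and lower semicontinuity of $Q \mapsto \IOS G(x,v,m^Q)\theta$ follows from convexity of $G$ in its last argument. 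The constraint $(e_0,e_1)_\# Q = \gamma$ passes to the limit by continuity of the evaluation maps, yielding a solution $\Qb \in \Qc^q(\gamma)$ of \eqref{pc3}. This relaxation step is the main technical obstacle.

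\textbf{Item 2 (Wardrop identity).} The argument is a convex first-order condition. Given $\Qb$ optimal and any $Q \in \Qc^q(\gamma)$, convexity of $\Qc^q(\gamma)$ makes $Q_\ep := (1-\ep)\Qb + \ep Q$ admissible with $m^{Q_\ep} = (1-\ep)m^{\Qb} + \ep m^Q$. Differentiating $\ep \mapsto \IOS G(x,v,m^{Q_\ep})\theta$ at $\ep = 0^+$ (convexity ensures the one-sided derivative exists) yields $\IOS \xQb (m^Q - m^{\Qb}) \theta \geq 0$, which via \eqref{5.6} becomes $\IL L_{\xQb}(\sigma,\rho) \, dQ \geq \IL L_{\xQb}(\sigma,\rho) \, d\Qb$ for every $Q \in \Qc^q(\gamma)$. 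Combined with \eqref{5.6b} applied to $\xi = \xQb$, which gives $\IL L_{\xQb} dQ \geq \IObOb \overline{c}_{\xQb} d\gamma$ for all $Q$, and a standard measurable-selection argument producing $Q^\ast \in \Qc^q(\gamma)$ with $\IL L_{\xQb} dQ^\ast$ arbitrarily close to $\IObOb \overline{c}_{\xQb} d\gamma$, I obtain $\IL L_{\xQb} d\Qb = \IObOb \overline{c}_{\xQb} d\gamma$; using $(e_0,e_1)_\# \Qb = \gamma$ this is exactly \eqref{eqQ}. Conversely, if \eqref{eqQ} holds, then \eqref{5.6b} forces $\IOS \xQb m^Q \theta \geq \IOS \xQb m^{\Qb} \theta$ for every admissible $Q$, and the convex subgradient inequality $G(x,v,m^Q) \geq G(x,v,m^{\Qb}) + \xQb (m^Q - m^{\Qb})$ integrated against $\theta$ delivers optimality of $\Qb$.

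\textbf{Item 3 (duality and the solution of \eqref{pc2}).} Weak duality $\inf \eqref{pc2} \geq \sup \eqref{pc3}$ is precisely \eqref{5.9}. For strong duality I take the solution $\Qb$ from Item 1 and set $\bar{\xi} := \xQb$; the $(q-1)$-growth of $g$ together with $m^{\Qb} \in L^q(\theta)$ places $\bar{\xi}$ in $L_+^p(\theta)$. Strict convexity of $G(x,v,\cdot)$ makes $\bar{\xi} = \partial_m G(x,v,m^{\Qb})$ the Legendre partner of $m^{\Qb}$, so the pointwise Young equality $H(x,v,\bar{\xi}) + G(x,v,m^{\Qb}) = \bar{\xi} \, m^{\Qb}$ holds. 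Integrating against $\theta$, using \eqref{5.6} to rewrite $\IOS \bar{\xi} m^{\Qb} \theta = \IL L_{\bar{\xi}} d\Qb$, and invoking \eqref{eqQ} from Item 2 to replace this by $\IObOb \overline{c}_{\bar{\xi}} d\gamma$, collapses $J(\bar{\xi})$ to $-\IOS G(x,v,m^{\Qb}) \theta$, which is the value of \eqref{pc3}. Combined with weak duality this forces $\inf \eqref{pc2} = \sup \eqref{pc3}$ and shows that $\bar{\xi}$ solves \eqref{pc2}. The hardest step is Item 1, for the Polish/Prokhorov obstruction noted above; Items 2 and 3 are then soft consequences of convex duality and the structural results \eqref{5.6}--\eqref{5.9} already established.
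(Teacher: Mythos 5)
For item 1 your outline is essentially the paper's: maximizing sequence, $L^q(\theta)$ bound on $m^{Q_n}$ from \eqref{qcroi}, constant-speed reparameterization, relaxation to Young measures to recover a Polish space, Prokhorov, weak closedness of the marginal constraint, and lower semicontinuity via convexity. But two ingredients you treat as routine are where the paper actually works: (i) the reduction to controlled decompositions $\sum_k\rho_k(t)\leq C'|\dot{\sigma}(t)|$ (\ref{pbred}, resting on \ref{remhy4}, i.e. on \ref{hy4}); the tightness estimate of \ref{lem2.8} uses the restriction to $\Sc^{C'}$ to control the supports of the $\nu_t$'s, and a bound on $\|m^{Q_n}\|_{L^q(\theta)}$ alone does not prevent the Young measures from escaping in the $v$-variable; (ii) the two-way correspondence of \ref{egpb} between $\Qc^q(\gamma)$ and the Young-measure class $\Xc^q(\gamma)$, together with \ref{lem2.9} and the monotonicity of $G$ (the limit only satisfies $i^X\leq i$), which is what lets one return from the relaxed optimizer to a genuine $Q\in\Qc^q(\gamma)$. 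So item 1 is the right approach, compressed.

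The genuine gap is in your \emph{only if} direction of item 2, and it propagates to item 3, which you derive from it. The variational inequality $\IL L_{\xQb}\,dQ\geq\IL L_{\xQb}\,d\Qb$ for all $Q\in\Qc^q(\gamma)$ is fine, but to reach \eqref{eqQ} you invoke ``a standard measurable-selection argument'' producing $Q^\ast\in\Qc^q(\gamma)$ with $\IL L_{\xQb}\,dQ^\ast$ arbitrarily close to $\IObOb \overline{c}_{\xQb}\,d\gamma$. This is not standard and, as stated, does not work. First, $\xQb$ is only in $L_+^p(\theta)$, so $L_{\xQb}(\sigma,\rho)$ is not defined curve by curve: it exists only as an $L^1(\Lc,Q)$ limit for $Q$ \emph{already} in $\Qc^q(\gamma)$, and $\overline{c}_{\xQb}$ is itself defined by relaxation \eqref{cbx}; there is no pointwise cost whose near-minimizing curves you could select. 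Second, and more seriously, admissibility requires $m^{Q^\ast}\in L^q(\theta)$ — both to be in $\Qc^q(\gamma)$ and for the pairing with $\xQb$ (hence your variational inequality) to make sense — and a measure concentrated on near-geodesics of a merely $L^p$ metric has no reason to have $L^q$ intensity; such summability is a hard issue in its own right, not a selection argument. The missing inequality $\inf_{Q\in\Qc^q(\gamma)}\IL L_{\xQb}\,dQ\leq\IObOb\overline{c}_{\xQb}\,d\gamma$ is exactly the hard half of the duality. The paper itself gives no details here: it refers for items 2 and 3 to the argument of Theorem 5.1 in \cite{baillon2012discrete}, whose machinery (built on the approximation results \ref{convcxin}, \ref{lem1} and the relations \eqref{5.6}--\eqref{5.9}) replaces precisely this step; your sketch would need to import that argument rather than a selection. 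Your \emph{if} direction (via \eqref{5.6b} and the subgradient inequality for $G$) and your deduction of item 3 from \eqref{eqQ} (Young equality, \eqref{5.6}, weak duality \eqref{5.9}) are correct as they stand.
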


It is the main result of this section. To prove it, we need some lemmas. First, let us start with a preliminary lemma on the $v_k$'s that is a consequence of \ref{hy4}. 

\begin{lemme} \label{remhy4}
For all subset $I \subset \{1, \dots, N \}$, it is all or nothing, that is, we are in one of the two following cases : 
\begin{enumerate} 
\item $0 \in$ Conv$\left ( \{v_i(x) \}_{i \in I} \right )$ for every $x \in \Ob$, 
\item $0 \notin$ Conv$\left ( \{v_i(x) \}_{i \in I} \right )$ for every $x \in \Ob$. 
\end{enumerate}
Moreover, there exists a constant $0 < \delta < 1$ such that for all subset $I \subset \{1, \dots, N \}$ that is in the second case, there exists $u_x \in \text{Conv}(\{v_i(x)\}_{i \in I}$ for all $x \in \Ob$ such that
\[
v_i(x) \cdot \frac{u_x}{|u_x|} \geq \delta \text{ for all  } i \in I.
\]
\end{lemme}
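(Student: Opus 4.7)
The plan is to establish the dichotomy of part~1 via a clopen argument on the compact connected set $\Ob$, and then to obtain the uniform constant $\delta$ in part~2 from the metric projection onto the convex hulls, using compactness of $\Ob$ together with the finiteness of the collection of subsets $I \subseteq \{1,\dots,N\}$.

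For part~1, I fix $I \subseteq \{1,\dots,N\}$ and consider
\[
E_I := \bigl\{ x \in \Ob : 0 \in \operatorname{Conv}(\{v_i(x)\}_{i \in I}) \bigr\}.
\]
Closedness of $E_I$ is immediate from continuity of the $v_i$'s together with compactness of the probability simplex: given $x_n \to x$ in $E_I$ and coefficients $\lambda^n$ with $\sum_{i \in I} \lambda^n_i v_i(x_n) = 0$, any limit point of $\{\lambda^n\}$ provides the required convex combination at $x$. For openness, I would use that at $x_0 \in E_I$ the conic hull of $\{v_i(x_0)\}_{i\in I}$ is in fact the entire linear span $V_0$ of these vectors (since $0 = \sum \lambda_i v_i(x_0)$ with $\lambda_i \geq 0$ not all zero forces each $-v_j(x_0)$ into the cone), and invoke \ref{hy4}: the controlled decompositions it provides allow us to absorb the small perturbation $\sum_i \lambda_i (v_i(x) - v_i(x_0))$ by adjusting coefficients inside $V_0$ for $x$ close to $x_0$, which yields a nontrivial nonnegative combination of $\{v_i(x)\}_{i \in I}$ summing to $0$. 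Connectedness of $\Ob$ then forces $E_I \in \{\emptyset, \Ob\}$.

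For part~2, suppose $I$ is in case~2, so the compact convex set $K_I(x) := \operatorname{Conv}(\{v_i(x)\}_{i \in I})$ avoids the origin. Let $u_x$ denote the orthogonal projection of $0$ onto $K_I(x)$; the characterization of the projection on a closed convex set gives $(v_i(x) - u_x) \cdot u_x \geq 0$ for every $i \in I$, so
\[
v_i(x) \cdot \frac{u_x}{|u_x|} \geq |u_x| = \operatorname{dist}\bigl(0, K_I(x)\bigr).
\]
The set-valued map $x \mapsto K_I(x)$ is Hausdorff continuous by continuity of the $v_i$'s, so $x \mapsto \operatorname{dist}(0, K_I(x))$ is continuous on $\Ob$ and strictly positive by case~2; compactness of $\Ob$ therefore supplies a positive minimum $\delta_I > 0$. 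Setting $\delta := \min_I \delta_I$ over the finitely many subsets $I$ of $\{1,\dots,N\}$ in case~2 yields the desired uniform constant.

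The main obstacle is the openness of $E_I$ in part~1: continuity of the $v_i$'s alone is not enough, because $0$ could in principle lie on the boundary of $\operatorname{Conv}(\{v_i(x_0)\}_{i \in I})$ and escape outside after an arbitrarily small perturbation of $x_0$. Ruling out this pathology is precisely where \ref{hy4} is needed, its uniformly bounded decompositions ensuring that the conic structure persists under small variations of $x$. Once the dichotomy is in place, part~2 is a routine compactness and projection argument.
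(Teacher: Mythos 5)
Your overall architecture is the same as the paper's: show that each of the two alternatives defines a closed subset of the connected set $\Ob$ (equivalently, that $E_I$ is clopen), then get the uniform $\delta$ by compactness. Your closedness argument for $E_I$ is correct, and your part 2 via the projection of $0$ onto $K_I(x)$ is sound and in fact more explicit than the paper's brief appeal to a salient cone and its medial axis (only cosmetic remark: replace your $\delta$ by $\min(\delta,1/2)$, say, to meet the requirement $\delta<1$, which is harmless since the inequality only improves when $\delta$ decreases).

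The genuine gap is exactly where you locate it: the openness of $E_I$, and as written the absorption step does not work. First, the perturbation $r(x)=\sum_{i\in I}\lambda_i\,(v_i(x)-v_i(x_0))$ has no reason to lie in $V_0=\mathrm{span}\{v_i(x_0)\}_{i\in I}$, so ``adjusting coefficients inside $V_0$'' is not meaningful; what is needed is a conical decomposition of the perturbation in the directions $\{v_i(x)\}_{i\in I}$ \emph{at the nearby point} $x$, with coefficients controlled by $|r(x)|$. Second, \ref{hy4} by itself only gives a controlled decomposition in the full family $\{v_k(x)\}_{k=1}^N$; a decomposition using directions outside $I$ yields a vanishing nonnegative combination of all the $v_k(x)$'s, which says nothing about $\mathrm{Conv}(\{v_i(x)\}_{i\in I})$. (Also, your parenthetical claim that each $-v_j(x_0)$, $j\in I$, lies in the cone is only true for $j$ in the support of $\lambda$; one should first shrink $I$ to that support, which is harmless since the convex hull only grows back.) The missing device, which is the heart of the paper's proof, is the choice $\xi=\mathbf{1}_{I^c}$ in \eqref{min}, i.e.\ $\xi_i=0$ for $i\in I$ and $\xi_k=1$ otherwise. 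Note that $\sum_{i\in I}\lambda_i v_i(x)=r(x)$, so if $r(x)\neq 0$ the unit vector $r(x)/|r(x)|$ admits a conical decomposition supported on $I$, hence the minimal value in \eqref{min} is zero and the controlled minimizer $\bar Z$ of \ref{hy4} satisfies $\bar z_k=0$ for $k\notin I$ and $|\bar Z|\le C$. Scaling by $|r(x)|$ and subtracting from $\lambda$ (all $\lambda_i>0$ after shrinking $I$) gives $\sum_{i\in I}\bigl(\lambda_i-|r(x)|\,\bar z_i\bigr)v_i(x)=0$ with positive coefficients as soon as $|r(x)|<\min_i\lambda_i/C$, i.e.\ $0\in\mathrm{Conv}(\{v_i(x)\}_{i\in I})$ for $x$ close to $x_0$; the paper runs this same computation in sequential form, blowing up $\lambda$ by $\beta_n=1/|v^n|$ instead of scaling $\bar Z$ down. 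With this step supplied, your proof is complete.
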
  

\bpr
We will use the fact that $\Ob$ is connected. The first property is obviously closed since the $v_k$'s are continuous. Let us now show that the second one is closed. 
Let $I \subset \{1, \dots, N \}$, assume by contradiction that there exists a sequence $\{x_n\}_{n \geq 0} \in \Ob^{\NN}$ converging to $x \in \Ob$ such that $0 \notin C_n=$ Conv$\left ( \{v_i(x_n) \}_{i \in I} \right )$ for every $n \geq 0$ and $0 \in C=$ Conv$\left ( \{v_i(x) \}_{i \in I} \right )$. So there exists $\{ \lambda_i \}_{i \in I}$ such that $\sum_{i \in I} \lambda_i v_i(x) =0, \lambda_i \geq 0$ and $\sum_{i \in I} \lambda_i = 1$. Without loss of generality, we can assume that the $\lambda_i$'s are positive. Then we have that $v^n = \sum_{i \in I} \lambda_i v_i(x_n) \neq 0$ and converges to $0$ as $n \rightarrow +\infty$. Let $\beta_n > 0$ such that $|\beta_n v^n | = 1$ then $\beta_n$ converges to $+\infty$. Thanks to \ref{hy4}, with $\xi = (\xi_1, \dots, \xi_N), \xi_i = 0$ if $i \in I, 1$ otherwise, there exists $\{ z_i^n \}_{i \in I} \in (\RR_+^{\# I})^{\NN}$ such that $|z_i^n| \leq C$ and $\sum_{i\in I} z_i^n v_i(x_n) = \beta_n v_i(x_n)$ for all $i \in I$ and $n \geq 0$. Then we obtain  that $\sum_{i \in I} (\beta_n \lambda_i - z_i^n) v_i(x_n) = 0$. But for n large enough, we have that $\beta_n \lambda_i - z_i^n > 0$ for every $i \in I$, which is a contradiction. 

For a subset $I \subset \{1, \dots, N \}$ that is in the second case, Conv$(\{v_i(x)\}_{i \in I})$ is contained in a salient (pointed) cone for all $x \in \Ob$. For all $x \in \Ob$, we can think of $u_x$ as being in the medial axis of this cone. Since the $v_k$'s are continuous and $\Ob$ is compact, we have the desired result.
\epr 

Now let us define the following sets 
\[
\Lc^{C} = \left \{ (\sigma, \rho) \in \Lc : \sum_{k=1}^N \rho_k(t) \leq C |\dot{\sigma}(t)| \text{ a.e. } t \in [0,1] \right \}
\]
for some constants $C > 0$. 
 Now let us notice that we can simplify the problem \eqref{pc3} with the following lemma: 

\begin{lemme} \label{pbred}
For a well-chosen constant $C'> 1$, one has  
\begin{multline*} \label{pdcontrole}
\inf_{Q \in \mathcal{Q}^q(\gamma)} \int_{\OS} G \left (x,v, m^Q(x,v) \right ) \: \theta (dx, dv) \\
= \inf_{Q \in \mathcal{Q}^q(\gamma)} \left \{ \int_{\OS} G \left (x,v, m^Q(x,v) \right ) \: \theta (dx, dv) : Q \left ( \Lc^{C'} \right ) = 1\right \}.
\end{multline*}
\end{lemme}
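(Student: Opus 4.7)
The inequality $\geq$ in the statement is immediate. For the converse, given any $Q \in \mathcal{Q}^q(\gamma)$, my plan is to construct $\tilde{Q} \in \mathcal{Q}^q(\gamma)$ concentrated on $\mathcal{L}^{C'}$ and satisfying $m^{\tilde{Q}} \leq m^{Q}$ as measures on $\overline{\Omega}\times\mathbb{S}^{d-1}$; monotonicity of $G(x,v,\cdot)$ in the last variable (hypothesis~\ref{hy2}) then delivers $\int G(x,v,m^{\tilde{Q}})\,\theta \leq \int G(x,v,m^{Q})\,\theta$, proving the lemma.

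I build $\tilde{Q}$ by pruning $\rho$ pointwise along each curve. For each $(\sigma,\rho)\in\mathcal{L}$ and a.e. $t\in[0,1]$, let $\rho^{c}(t)$ be a minimizer of the linear program
\[
\min_{\rho'\in\mathbb{R}_+^{N}}\sum_{k=1}^{N}\rho'_{k}\quad\text{s.t.}\quad \rho'\leq\rho(t)\ \text{componentwise and}\ \sum_{k=1}^{N}\rho'_{k}v_{k}(\sigma(t))=\dot{\sigma}(t),
\]
chosen Borel-measurably in $(\sigma,\rho,t)$ via Kuratowski--Ryll-Nardzewski (the feasible set is a nonempty polytope containing $\rho(t)$ and depends measurably on the data; lexicographic tiebreaking gives a unique measurable selector). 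The induced map $\Phi(\sigma,\rho) := (\sigma,\rho^{c})$ is Borel on $\mathcal{L}$ and I set $\tilde{Q} := \Phi_{\#}Q$.

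The key estimate is $\sum_{k}\rho^{c}_{k}(t)\leq \delta^{-1}|\dot{\sigma}(t)|$, where $\delta\in(0,1)$ is the uniform constant from \ref{remhy4}. Set $I(t):=\{k:\rho^{c}_{k}(t)>0\}$ and argue by contradiction. If $I(t)$ fell under case~1 of \ref{remhy4}, i.e., $0\in\mathrm{Conv}(\{v_{i}(\sigma(t))\}_{i\in I(t)})$, there would exist $\lambda\in\mathbb{R}_+^{N}$ supported on $I(t)$ with $\sum_{k}\lambda_{k}>0$ and $\sum_{k}\lambda_{k}v_{k}(\sigma(t))=0$; since $\rho^{c}_{k}(t)>0$ for $k\in I(t)$, $\rho^{c}(t)-\varepsilon\lambda$ would remain feasible for small $\varepsilon>0$ but have strictly smaller total mass, contradicting minimality. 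Hence $I(t)$ lies in case~2, so there exists a unit vector $u$ (depending on $\sigma(t)$) with $v_{i}(\sigma(t))\cdot u\geq\delta$ for all $i\in I(t)$, yielding $|\dot{\sigma}(t)|\geq\dot{\sigma}(t)\cdot u=\sum_{i}\rho^{c}_{i}(t)\,v_{i}(\sigma(t))\cdot u\geq\delta\sum_{i}\rho^{c}_{i}(t)$. Taking $C':=1/\delta>1$ yields $\tilde{Q}(\mathcal{L}^{C'})=1$. When $\dot{\sigma}(t)=0$, the minimizer $\rho^{c}(t)=0$ is admissible and the bound holds trivially.

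It remains to verify $\tilde{Q}\in\mathcal{Q}^{q}(\gamma)$ and the cost bound. Since $\Phi$ leaves $\sigma$ unchanged, $(e_{0},e_{1})_{\#}\tilde{Q}=(e_{0},e_{1})_{\#}Q=\gamma$; since $\rho^{c}\leq\rho$ componentwise, $m_{k}^{\tilde{Q}}\leq m_{k}^{Q}$ for each $k$, hence $m^{\tilde{Q}}\leq m^{Q}$ as nonnegative measures. By Radon--Nikodym, $m^{\tilde{Q}}$ admits a density with respect to $\theta$ pointwise bounded by that of $m^{Q}$, so $m^{\tilde{Q}}\in L^{q}(\theta)$ and $\tilde{Q}\in\mathcal{Q}^{q}(\gamma)$; monotonicity of $G(x,v,\cdot)$ concludes. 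The main obstacle I expect is the measurable selection step: one must check carefully that the LP-minimizer is genuinely a Borel-measurable function of $(\sigma,\rho,t)$ and that the induced map $\Phi$ is Borel with respect to the product of the uniform topology on $C([0,1],\mathbb{R}^{d})$ and the weak $L^{1}$ topology on $L^{1}([0,1])^{N}$, so that $\Phi_{\#}Q$ is a bona fide element of $\mathcal{M}_+^{1}(\mathcal{L})$. The LP-optimality argument invoking \ref{remhy4} is then essentially mechanical.
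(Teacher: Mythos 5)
Your proof is correct and is essentially the paper's own argument: the paper likewise takes $C'=1/\delta$ with $\delta$ from \ref{remhy4}, prunes $\rho$ pointwise to some $\rhob\in\Pc_{\sigma}$ with $\rhob\leq\rho$ and $\sum_k\rhob_k(t)\leq C'|\dot{\sigma}(t)|$ (by iteratively subtracting conical combinations of $0$ from the support until $0$ leaves its convex hull, which is exactly the reduction your linear program performs in variational form), then pushes $Q$ forward through $(\sigma,\rho)\mapsto(\sigma,\rhob)$ and concludes from $m^{\Qb}\leq m^Q$ and the monotonicity of $G(x,v,\cdot)$. Two minor remarks: your opening sentence swaps the labels (the immediate inequality is $\leq$, since the right-hand infimum runs over a smaller set, and your construction proves $\geq$), and the measurable-selection issue you flag is real but is not addressed in the paper either, which tacitly assumes the map $(\sigma,\rho)\mapsto(\sigma,\rhob)$ is measurable.
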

 
 \bpr
  We set $C'=1/\delta$ where $\delta$ is given by \ref{remhy4}. Let $(\sigma, \rho) \in \Lc$. We will prove that there exists $\rhob \in \Pc_{\sigma}$ such that for all $t \in [0,1]$, $\rhob_k(t) \leq \rho_k(t)$ for all $k=1, \dots, N$ and $\sum_{k=1}^N \rhob_k(t) \leq C' |\dot{\sigma}(t)|$. 
Let $t \in [0,1]$ such that $\sum_{k=1}^N \rho_k(t) > C' |\dot{\sigma}(t)|$. Let us denote $I$ the subset of $\{1, \dots, N \}$ such that for every $k \in I, \rho_k(t) > 0$. First, if $0 \in$ Cone$(\{v_k(\sigma(t))\}_{k \in I})$, there exists a conical combination of $0$
\[ 
\sum_{k \in I} \lambda_k v_k(\sigma(t)) = 0 
\]
with the $\lambda_k$'s $\geq 0$. Then we write $\rhob_k(t) = \rho_k(t) - \lambda \lambda_k$ (we take $\lambda_k = 0$ for $k \notin I$) where 
\[
\lambda = \min_{k \in I : \lambda_k \neq 0} \left \{ \frac{\rho_k(t)}{\lambda_k} \right \}.
\]
We set $\bar{I}$ the subset of $I$ such that for every $k \in \bar{I}, \rhob_k(t) > 0$. We restart with $\rhob$ and we continue until $0 \notin$ Conv$(\{v_k(\sigma(t))\}_{k \in \bar{I}})$. Let $u$ be as in \ref{remhy4} for $I=\bar{I}$ and $x = \sigma(t)$. Then we have 
\[
|\dot{\sigma}(t) | \geq \dot{\sigma}(t) \cdot u = \sum_{k=1}^N \rhob_k(t) v_k(\sigma(t)) \cdot u  \geq \delta \sum_{k=1}^N \rhob_k(t) 
\]    
so that $\sum_{k=1}^N \rhob_k(t) \leq C' |\dot{\sigma}(t)|$. For $Q \in \Mc_+^1(\Lc)$, we denote by $\Qb \in \Mc_+^1(\Lc^{C'})$ the push forward of $Q$ through the map $(\sigma, \rho) \mapsto (\sigma, \rhob)$. Then we have $m^{\Qb} \leq m^Q$. Since $G(m,v, \cdot)$ is nondecreasing, we have : 
\[
\int_{\OS} G \left (x,v, m^{\Qb}(x,v) \right ) \: \theta (dx, dv) \leq \int_{\OS} G \left (x,v, m^Q(x,v) \right ) \: \theta (dx, dv).
\] 
\epr

To prove that the problem \eqref{pc3} has solutions, a natural idea would be to take a maximizing sequence $\{Q_n\}_{n \geq 0}$ for \eqref{pc3} and to show that it converges to $Q \in \Qc^q(\gamma)$ that solves \eqref{pc3}. For this, we would like to use Prokhorov's theorem which would allow us to obtain the tightness of $\{\Qt_n\}$ and $\star$-weak convergence of $\{Q_n\}$ to a measure in $\Mc_+^1(\Lc)$. Unfortunately, the space $C([0, 1], \RR^d) \times L^1([0, 1])^N$ is not a Polish space for the considered topology (because of the weak topology of $L^1([0,1])$). So we will work with Young's measures in order to apply Prokhorov's theorem. Let us define the set $C = C([0,1], \RR^d) \times \mathfrak{P}_1(\RR^d \times [0,1])$ where for a Polish space $(E,d)$, we set 
\[
\mathfrak{P}_1(E) = \left \{ \mu \in \Mc_+^1(E) : \int_E \text{d}(x,x') \: d\mu(x) < +\infty \: \text{ for some } x' \in E \right \}.  
\]
We equip $C$ with the product topology, that on $C([0,1], \RR^d)$ being the uniform topology and $\Pf_1(\RR^d \times [0,1])$ being endowed with the $1$-Wasserstein distance 
\[
W_1(\mu^1, \mu^2) := \min \left \{ \int_{E^2} \text{d}(x_1, x_2) \: d \mu(x_1, x_2) : \mu \in \Pi(\mu^1, \mu^2) \right \} 
\]
where $E = \RR^d \times [0,1]$, d is the usual distance on $E$, $(\mu^1, \mu^2) \in \Pf_1(E)^2$ and $\Pi(\mu^1, \mu^2)$ is the set of transport plans between $\mu^1$ and $\mu^2$, that is, the set of probability measures $\mu$ on $E^2$, having $\mu^1$ and $\mu^2$ as marginals: 
\beq \label{defplantransp}
\int_{E \times E} \vp(x) d \mu (x,y) = \int_E \vp(x) d\mu^1(x) \text{ and } \int_{E \times E} \vp(y) d \mu (x,y) = \int_E \vp(x) d\mu^2(x), 
\eeq
for every $\vp \in C(E, \RR)$. The set $C$ is a Polish space (see \cite{ambrosio2008gradient}). Let us denote by $\lambda$ the Lebesgue measure on $[0,1]$ and let us consider the subset $\Sc$ of $C$ : 
\[
\Sc = \left \{ (\sigma, \nu_t \otimes \lambda) : \sigma \in W^{1,\infty}([0,1], \Ob), \nu_t \otimes \lambda \in \Pf_1(E), \nu_t \in \Mfs^t \text{ a.e. } t  \right \},
\]
where for $t \in [0,1]$ and $\sigma \in W^{1,\infty}([0,1], \Ob)$, 
\[
\Mfs^t = \left \{ \nu_t \in \Mc_+^1(\RR^d) : \text{supp } \nu_t \subset \bigcup_{k=1}^N \RR_+ v_k(\sigma(t)) \text{ and } \dot{\sigma}(t) = \int_{\RR^d} v \: d\nu_t(v) \right \}. 
\]

The Young measures $\nutl$ are the analogue of the decompositions $\rho \in \Pc_{\sigma}$. For the general theory of the Young measures, see for instance \cite{pedregal2012parametrized}.

Let us define the set of probability measures on curves $(\sigma, \nu_t \otimes \lambda)$ that are consistent with the transport plan $\gamma$ :
\beq \label{Xg}
\Xc(\gamma) := \{X \in \mathcal{M}_+^1(\Sc) : (e_0, e_1)_{\#}X= \gamma \}.
\eeq 
This is the analogue of \eqref{Qg}. Let us now write the analogue of $m^Q$ (given by \eqref{mQ}) as follows:
\beq \label{iX}
\int_{\ObS} \xi di^X = \int_{\Sc} \Lb_{\xi}(\sigma, \kappa) \: dX(\sigma, \kappa), \forall \xi \in C(\ObS, \RR)
\eeq
where for every $(\sigma, \kappa = \nu_t \otimes \lambda) \in \Lc$, 
\beq \label{5.2b}
\Lb_{\xi}(\sigma, \kappa) = \int_0^1 \left ( \int_{\RR^d} \xi \left (\sigma(t),\frac{v}{|v|} \right) |v| \: d\nu_t(v) \right ) dt. 
\eeq 
Then let us define
\beq \label{Xqg}
\Xc^q(\gamma) := \{ X \in \Xc(\gamma) \: : \: i^X \in L^q(\OS, \theta) \}
\eeq
 
Let $X \in \Xc^q(\gamma)$. By the same reasoning as for $Q \in \Qc^q(\gamma)$, if $\xi \in L_+^p(\theta)$, we denote by $\Lb_{\xi}$ the limit of the Cauchy sequence $\Lb_{\xi_n}$ in $L^1(\Sc, X)$ for any sequence $(\xi_n)_n$ converging in $L^p(\theta)$ to $\xi$. 
We may write the analogue of the problem \eqref{pc3} :
 \begin{equation} \label{pcX}
\sup_{X \in \Xc^q(\gamma)} - \int_{\OS} G \left (x,v, i^X(x,v) \right ) \: \theta (dx, dv). 
\end{equation}

\begin{lemme} \label{egpb}
One has $\sup \eqref{pc3} =  \sup \eqref{pcX}$.
\end{lemme}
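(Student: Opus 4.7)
My plan is to prove the two inequalities separately. For $\sup \eqref{pc3} \leq \sup \eqref{pcX}$, I would embed every $Q \in \Qc^q(\gamma)$ into $\Xc^q(\gamma)$ via a Borel pushforward that preserves the objective exactly. For the reverse inequality I would go the other way: disintegrate the Young measure $\nu_t$ of any $X \in \Xc^q(\gamma)$ along the rays $\RR_+ v_k(\sigma(t))$, apply the truncation from the proof of \ref{pbred} to force the resulting coefficients to be bounded, and check that the associated $\bar Q \in \Qc^q(\gamma)$ satisfies $m^{\bar Q} \leq i^X$, so that monotonicity of $G(x,v,\cdot)$ yields an objective at least as large.

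\textbf{First inequality.} Given $(\sigma, \rho) \in \Lc$, set $s(t) := \sum_k \rho_k(t)$ and define
\[
\nu_t^\rho := \begin{cases} \sum_{k=1}^N \dfrac{\rho_k(t)}{s(t)}\, \delta_{s(t) v_k(\sigma(t))} & \text{if } s(t) > 0, \\ \delta_0 & \text{if } s(t) = 0. \end{cases}
\]
Then $\nu_t^\rho \in \Mfs^{t}$: it is a probability measure, its support lies in $\bigcup_k \RR_+ v_k(\sigma(t))$, and $\int_{\RR^d} v\, d\nu_t^\rho(v) = \sum_k \rho_k(t) v_k(\sigma(t)) = \dot\sigma(t)$. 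The map $\Phi(\sigma, \rho) := (\sigma, \nu_t^\rho \otimes \lambda)$ is Borel; the pushforward $\Phi_{\#} Q$ preserves the evaluations $e_0,e_1$ and hence belongs to $\Xc(\gamma)$. A direct substitution gives $\Lb_\xi \circ \Phi = L_\xi$ for every $\xi \in C(\ObS, \RR_+)$, so $i^{\Phi_{\#} Q} = m^Q$ as Radon measures on $\ObS$. In particular $\Phi_{\#} Q \in \Xc^q(\gamma)$ and the two objectives coincide.

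\textbf{Second inequality.} Fix $X \in \Xc^q(\gamma)$. For $(\sigma, \nu_t \otimes \lambda) \in \Sc$, the support of $\nu_t$ lies in $\bigcup_k \RR_+ v_k(\sigma(t))$, so I would split $\nu_t = \sum_{k=1}^N \nu_t^k$ with $\mathrm{supp}(\nu_t^k) \subset \RR_+ v_k(\sigma(t))$, assigning mass on coinciding rays to the smallest index (measurability via Kuratowski--Ryll-Nardzewski). Let $\bar\nu_t^k$ be the pushforward of $\nu_t^k$ by $r v_k(\sigma(t)) \mapsto r$, and put $\rho_k(t) := \int_{\RR_+} r\, d\bar\nu_t^k(r)$. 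Then $\rho \in \Pc_\sigma$ because $\sum_k \rho_k(t) v_k(\sigma(t)) = \int_{\RR^d} v\, d\nu_t(v) = \dot\sigma(t)$, but $\rho$ need not be bounded. Applying the truncation from the proof of \ref{pbred} yields $\bar\rho \in \Pc_\sigma$ with $0 \leq \bar\rho_k(t) \leq \rho_k(t)$ and $\sum_k \bar\rho_k(t) \leq C' |\dot\sigma(t)|$ a.e., so $(\sigma, \bar\rho) \in \Lc^{C'} \subset \Lc$. Set $\bar\Phi(\sigma, \kappa) := (\sigma, \bar\rho)$ and $\bar Q := \bar\Phi_{\#} X$; evaluations at $0,1$ are preserved so $\bar Q \in \Qc(\gamma)$. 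For $\xi \in C(\ObS, \RR_+)$,
\[
L_\xi(\sigma, \bar\rho) \leq L_\xi(\sigma, \rho) = \int_0^1 \sum_{k=1}^N \xi(\sigma(t), v_k(\sigma(t))) \int_{\RR_+} r\, d\bar\nu_t^k(r)\, dt = \Lb_\xi(\sigma, \nu_t \otimes \lambda),
\]
so integrating against $X$ gives $m^{\bar Q} \leq i^X$ as nonnegative Radon measures on $\ObS$. Hence $m^{\bar Q} \in L^q(\theta)$, $\bar Q \in \Qc^q(\gamma)$, and by monotonicity of $G(x,v,\cdot)$,
\[
-\int_{\OS} G(x,v,m^{\bar Q})\, \theta(dx,dv) \geq -\int_{\OS} G(x,v,i^X)\, \theta(dx,dv),
\]
yielding $\sup \eqref{pc3} \geq \sup \eqref{pcX}$ after taking the supremum over $X$.

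\textbf{Main obstacle.} The delicate step is the Borel measurability of the decomposition $(\sigma, \kappa, t) \mapsto \nu_t^k$ and of the truncation map $\bar\Phi$, which is what allows $\bar\Phi_{\#} X$ to be a well-defined element of $\Qc(\gamma)$; this is handled by a canonical selection rule (smallest-index-first) together with Kuratowski--Ryll-Nardzewski, after which the remainder of the argument is just the natural correspondence between atomic probability measures on the rays $\RR_+ v_k(\sigma(t))$ and conical decompositions of $\dot\sigma(t)$ in the family $\{v_k(\sigma(t))\}$.
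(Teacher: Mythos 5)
Your proposal is correct, and in both directions it is built on the same correspondences as the paper's own proof: from $(\sigma,\rho)\in\Lc$ you form the atomic Young measure $\nu_t^{\rho}=\sum_k \frac{\rho_k(t)}{|\rho(t)|_1}\delta_{|\rho(t)|_1 v_k(\sigma(t))}$ and push $Q$ forward (identical to the paper, which also gets $i^{X^Q}=m^Q$), and conversely you recover a decomposition by taking the $|v|$-weighted mass of $\nu_t$ on each ray $\RR_+v_k(\sigma(t))$, exactly as the paper defines $\rho^{\nu}_k(t)=\int_{\RR_+v_k(\sigma(t))}|v|\,d\nu_t(v)$ (the paper splits mass equally over coinciding directions where you use a smallest-index rule; this is immaterial since $\xi$ only sees the common direction). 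The one genuine difference is in the converse direction: the paper asserts $(\sigma,\rho^{\nu})\in\Lc$ and $m^{Q^X}=i^X$ outright, although the condition $\nu_t\otimes\lambda\in\Pf_1(\RR^d\times[0,1])$ only guarantees $\rho^{\nu}\in L^1([0,1])^N$, not the $L^{\infty}$ bound required in the definition of $\Lc$; you instead run the truncation of the proof of \ref{pbred} to replace $\rho^{\nu}$ by $\bar\rho$ with $\sum_k\bar\rho_k\le C'|\dot\sigma|$, which puts $(\sigma,\bar\rho)$ in $\Lc^{C'}\subset\Lc$ and yields only $m^{\bar Q}\le i^X$, after which monotonicity of $G(x,v,\cdot)$ closes the argument. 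This buys a cleaner justification of membership in $\Qc^q(\gamma)$ at the cost of an inequality instead of an equality of flows (harmless for the suprema); note the paper performs essentially the same truncation immediately after the lemma, when it reduces to measures concentrated on $\Sc^{C'}\cap\Sct$, so your variant just moves that step inside the proof. Your appeal to a measurable-selection argument for the ray decomposition and the truncation map is the right way to address the measurability of the pushforwards, a point the paper leaves implicit.
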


\bpr 
Let $Q \in \Qc^q(\gamma)$ and $\sigma \in W^{1,\infty}([0,1], \Ob)$. For $ \rho = (\rho_1, \dots, \rho_N) \in \Pc_{\sigma}$, we define the  Young's measure $\nu^{\rho}_t \otimes \lambda$ as follows :
\[
\nu^{\rho}_t = \sum_{k=1}^N \frac{\rho_k(t)}{|\rho(t)|_1} \delta_{ \{|\rho(t)|_1 v_k(\sigma(t))\} },
\]
where $|\rho(t)|_1 = \sum_{k=1}^N \rho_k(t)$ for every $t \in [0,1]$.
We consider the measure $X^Q \in \Xc^q(\gamma)$ defined by
\[
\int_{\Sc} \vp \: dX^Q = \IL \vp(\sigma, \nu^{\rho}_t \otimes \lambda ) \: dQ(\sigma, \rho), \text{ for all } \vp \in C(\Sc, \RR).
\]
Since we have $m^Q = i^{X^Q}$ we immediately get $\sup \eqref{pc3} \leq \sup  \eqref{pcX}$. 

For the converse inequality, let $X \in \Xc^q(\gamma)$, we build $Q^X \in \Qc^q(\gamma)$. Let $(\sigma, \nu_t \otimes \lambda) \in \Sc$, recalling that one has $\text{supp } \nu_t \subset \bigcup_{k=1}^N \RR_+ v_k(\sigma(t))$ for $t \in [0,1]$, we define $\rho^{\nu} \in \Pc_{\sigma}$ as follows 
\[
\rho^{\nu}_k(t) = \int_{\RR_+ v_k(\sigma(t))} |v| \: d\nu_t(v), \text{ for all } k=1, \dots, N
\]
and $\rho^{\nu} = (\rho^{\nu}_1, \dots, \rho^{\nu}_N)$ if the $v_k(\sigma(t))$'s are pairwise distinct. Otherwise, let us decompose $\{ 1, \dots, N\} = \bigcup_{j=1}^s I_j$ where the $I_k$'s are pairwise disjoint and such that for all $j=1,\dots,s$ and $k \in I_j, v_k(\sigma(t)) = v_j$ where the $v_j$'s are pairwise distinct. Then for all $j=1,\dots,s$ and $k \in I_j$, we set
\[
\rho^{\nu}_k(t) = \frac{1}{\# I_j} \int_{\RR_+ v_j} |v| \: d\nu_t(v). 
\]  
The element $\rho^{\nu}$ is in $\Pc_{\sigma}$. Similarly, we set  
\[
\int_{\Lc} \vp \: dQ^X = \ISc \vp(\sigma, \rho^{\nu} ) \: dQ(\sigma, \nutl) \text{ for all } \vp \in C(\Lc, \RR).
\]
From the fact that $m^{Q^X} = i^X$ it follows that $\sup \eqref{pc3} \geq \sup \eqref{pcX}$. 
\epr

Let us notice that with the previous proof for $(\sigma, \nu_t \otimes \lambda) \in \Sc$, we may build $\nut$ as a sum of Dirac measures : 
\[
\nut_t = \sum_{k=1}^N \frac{\rho^{\nu}_k (t)}{|\rho^{\nu}(t)|_1} \delta_{\{|\rho^{\nu}(t)|_1 v_k(\sigma(t))\}} 
\]  
where $\rho^{\nu}$ is given in the previous proof. Therefore it follows from the same reasoning as in the proof of \ref{pbred} that we may take $\rhonu \in \Pc_{\sigma}$ such that for $t \in [0,1], \sum_{k=1}^N \rho_k(t) \leq C' |\dot{\sigma}(t)|$. Moreover, we can choose $(\sigma, \rho)$ only in $\Lct$ with $|\dot{\sigma}|$ constant. Then the new measure $\sum_{k=1}^N \frac{\rhonu_k (t)}{|\rhonu(t)|_1} \delta_{\{|\rho(t)|_1 v_k(\sigma(t))\}}$ that we continue to denote by $\nut_t$ by abuse of notations is in $\Mfs^t$. Let us define
\begin{multline*}
\Sc^{C'} = \{(\sigma, \nu_t \otimes \lambda) \in \Sc : \text{supp } \nu_t \cap \RR_+ v_k(\sigma(t)) = \{ \rho(t) v_k(\sigma(t)) \} \\
\text{ with } \rho(t) \leq C' |\dot{\sigma}(t)| \text{ for } k=1, \dots, N \text{ and } t \in [0,1] \}
\end{multline*} 
and 
\[
\Sct = \{ (\sigma, \nu_t \otimes \lambda) \in \Sc : |\dot{\sigma}| \text{ is constant} \}.
\]
For $X \in \Mc_+^1(\Sc)$, we denote by $\Xt \in \Mc_+^1(\Sc^{C'} \cap \Sct)$ the push forward of $X$ through the map $(\sigma, \nu_t \otimes \lambda) \mapsto (\tilde{\sigma}, \nut_t \otimes \lambda)$. Then we have $i^{\Xt} \leq i^X$. Since $G(m,v, \cdot)$ is nondecreasing, we may consider only the measures $\Xt \in \Mc_+^1(\Sc^{C'} \cap \Sct)$ for the problem \eqref{pcX}. 

We now adapt the proof in \cite{baillon2012discrete}. In particular we have to generalize Lemmas $2.7$ and $2.8$ in \cite{carlier2008optimal}, this becomes
\begin{lemme} \label{lem2.7}
For every $\varphi \in C(\ObS, \RR_+)$, $\Lb_{\varphi}$ is l.s.c. on $\Sc$ for the topology defined above.
\end{lemme}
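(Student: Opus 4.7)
The plan is to reduce the statement to the (narrow) weak convergence of the measures $\kappa_n$ after truncating the linear growth of the integrand in $v$, and then to recover $\Lb_\vp(\sigma, \kappa)$ in the limit by monotone convergence in the truncation parameter, using that $\vp \geq 0$.

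First, I would extend $\vp$ to a continuous function $\tilde\vp : \Ob \times \RR^d \to \RR_+$ by
\[
\tilde\vp(x, v) := |v|\, \vp\!\left(x, \frac{v}{|v|}\right) \text{ for } v \neq 0, \qquad \tilde\vp(x, 0) := 0,
\]
so that $0 \leq \tilde\vp(x, v) \leq \|\vp\|_\infty |v|$ on $\Ob \times \RR^d$ and
\[
\Lb_\vp(\sigma, \kappa) = \int_{\RR^d \times [0, 1]} \tilde\vp(\sigma(t), v) \, d\kappa(v, t)
\]
whenever $\kappa = \nu_t \otimes \lambda$. Now fix $(\sigma_n, \kappa_n) \to (\sigma, \kappa)$ in $\Sc$: the definition of the product topology on $C$ yields $\sigma_n \to \sigma$ uniformly on $[0, 1]$ and $W_1(\kappa_n, \kappa) \to 0$, and in particular $\kappa_n \rightharpoonup \kappa$ narrowly.

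Given $R > 0$, pick a continuous cutoff $\eta_R : \RR^d \to [0, 1]$ with $\eta_R \equiv 1$ on $\{|v| \leq R\}$ and $\mathrm{supp}\, \eta_R \subset \{|v| \leq R + 1\}$, and set
\[
F^n_R(v, t) := \tilde\vp(\sigma_n(t), v)\, \eta_R(v), \qquad F_R(v, t) := \tilde\vp(\sigma(t), v)\, \eta_R(v).
\]
Both are bounded continuous on $\RR^d \times [0, 1]$ with support contained in the compact cylinder $\{|v| \leq R + 1\} \times [0, 1]$. Since $\tilde\vp$ is uniformly continuous on the compact $\Ob \times \overline{B(0, R + 1)}$ and $\sigma_n \to \sigma$ uniformly, $F^n_R \to F_R$ uniformly on $\RR^d \times [0, 1]$; combined with $\kappa_n \rightharpoonup \kappa$ and the fact that the $\kappa_n$ are probability measures, this gives $\int F^n_R \, d\kappa_n \to \int F_R \, d\kappa$. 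Since $\tilde\vp \geq 0$ and $\eta_R \leq 1$, we have $\int F^n_R \, d\kappa_n \leq \Lb_\vp(\sigma_n, \kappa_n)$, whence
\[
\liminf_{n \to \infty} \Lb_\vp(\sigma_n, \kappa_n) \;\geq\; \int_{\RR^d \times [0, 1]} \tilde\vp(\sigma(t), v)\, \eta_R(v) \, d\kappa(v, t).
\]

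Letting $R \to \infty$, monotone convergence applied to the right-hand side (justified by $\eta_R \nearrow 1$ and $\tilde\vp \geq 0$) produces $\Lb_\vp(\sigma, \kappa)$, which gives the desired lower semicontinuity. The essentially only delicate point is the passage to the limit in $n$ against an integrand with linear growth in $v$: this is precisely what the truncation by $\eta_R$ is designed to avoid, and it is the nonnegativity assumption $\vp \geq 0$ (hence $\tilde\vp \geq 0$) that permits cleanly removing the truncation at the end via monotone convergence. I expect no further obstacle.
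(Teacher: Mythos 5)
Your proof is correct and follows essentially the same route as the paper's: the paper also truncates the linearly growing integrand, writing $(t,v)\mapsto \vp(\sigma(t),\frac{v}{|v|})\,|v|$ as the increasing limit of the bounded continuous functions $(t,v)\mapsto \vp(\sigma(t),\frac{v}{|v|})\min(|v|,n)$, and combines narrow convergence of $\nu_t^n\otimes\lambda$ with the uniform convergence of $\sigma^n$ to conclude. Your cutoff $\eta_R$ plays exactly the role of the paper's $\min(|v|,n)$ truncation, and your explicit uniform-continuity step for replacing $\sigma_n$ by $\sigma$ just spells out what the paper states more briefly.
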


\begin{proof}
Let $(\sigma,\nu_t \otimes \lambda) \in \Sc$ and $(\sigma^n, \nu_t^n \otimes \lambda)$ be a sequence converging to $(\sigma,\nu_t \otimes \lambda) \in \Sc$. Then by definition, we have 
\[
\Lb_{\vp}(\sigma^n,\nu_t^n \otimes \lambda) = \int_0^1 \left ( \int_{\RR^d} \vp \left ( \sigma^n(t),\frac{v}{|v|} \right ) |v| \: d\nu_t^n(v) \right ) dt. 
\]
$\sigma^n \rightarrow \sigma$ in $C([0,1])$ so that $\vp(\sigma^n(\cdot),\frac{v}{|v|})$ converges strongly in $L^{\infty}$. Since $\nu_t^n$ narrowly converges to $\nu_t$ and the function $(t,v) \mapsto \vp(\sigma(t),\frac{v}{|v|}) |v|$ is the upper limit of $(t,v) \mapsto \vp(\sigma(t),\frac{v}{|v|}) \min (|v|, n)$, that is continuous and bounded, as $n \rightarrow +\infty$, we obtain the desired result. 
\end{proof}

\begin{lemme} \label{lem2.8}
Let $(X_n)_n \in \mathcal{M}_+^1(C)^{\NN}$ be such that $X_n(\Sc^{C'}) = 1$ for every $n$ and there exists a constant $M>0$ such that 
\[
\sup_n \int_{\Sc} l(\sigma) \: dX_n(\sigma, \nu_t \otimes \lambda) \leq M.
\]
Then the sequence $(\Xt_n)_n$ is tight and admits a subsequence that weakly-$\star$ converges to a probability measure $X$ such that $X(\Sc) = 1$. 
\end{lemme}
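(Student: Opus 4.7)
The strategy is to establish tightness of $(\Xt_n)$ on the Polish space $C=C([0,1],\RR^d)\times\Pf_1(\RR^d\times[0,1])$, apply Prokhorov to extract a weakly-$\star$ convergent subsequence with limit $X\in\Mc_+^1(C)$, and then check that the defining conditions of $\Sc$ are stable enough under the relevant convergences to conclude $X(\Sc)=1$.

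For tightness, I would exploit the control on the length. Since reparametrization preserves $l(\sigma)$, the hypothesis yields $\int l(\sigma)\, d\Xt_n \le M$, so Markov's inequality gives $\Xt_n(\{l(\sigma)>R\})\le M/R$. On the complement $\{l(\sigma)\le R\}\cap \Sc^{C'}\cap\Sct$, the reparametrized curve $\tilde\sigma$ satisfies $|\dot{\tilde\sigma}|=l(\sigma)\le R$, so by Arzelà–Ascoli the collection of such $\tilde\sigma$ lies in a compact set $K_1^R\subset C([0,1],\RR^d)$; simultaneously the $\Sc^{C'}$ condition forces $|v|\le C'|\dot{\tilde\sigma}(t)|\le C'R$ on $\mathrm{supp}(\nu_t)$, so the support of $\nutl$ sits in the fixed compact $\overline B(0,C'R)\times[0,1]$, and the set $K_2^R$ of probability measures supported on it is compact in $\Pf_1$ (narrow compactness by Prokhorov, plus automatic uniform integrability of $|v|$ because of the uniform compact support, which makes narrow and $1$-Wasserstein topologies coincide there). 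Hence $K^R:=K_1^R\times K_2^R$ is compact and $\Xt_n(K^R)\ge 1-M/R$, proving tightness and, via Prokhorov, the existence of a weakly-$\star$ convergent subsequence $\Xt_{n_k}\to X$.

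For $X(\Sc)=1$ it suffices, by Portmanteau for closed sets, to show that $K^R\cap\Sc$ is closed in $C$ for each $R$: then $X(K^R\cap\Sc)\ge\limsup_k\Xt_{n_k}(K^R\cap\Sc)\ge 1-M/R$ and $X(\Sc)=1$ follows by sending $R\to+\infty$. So assume $(\tilde\sigma^k,\nu^k_t\otimes\lambda)\in K^R\cap\Sc^{C'}\cap\Sct$ converges in $C$ to $(\tilde\sigma,\kappa)$. The uniform Lipschitz bound $R$ on $\tilde\sigma^k$ passes to $\tilde\sigma$, so $\tilde\sigma\in W^{1,\infty}([0,1],\Ob)$, and the second marginal of $\nu^k_t\otimes\lambda$ being $\lambda$ is preserved under $W_1$-convergence, giving $\kappa=\nu_t\otimes\lambda$. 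The closed sets $S^k=\{(v,t):v\in\bigcup_j\RR_+ v_j(\tilde\sigma^k(t))\}$ converge in the Kuratowski sense to $S=\{(v,t):v\in\bigcup_j\RR_+ v_j(\tilde\sigma(t))\}$ by continuity of the $v_j$ and uniform convergence of $\tilde\sigma^k$, and the upper semicontinuity of support under narrow convergence forces $\mathrm{supp}(\kappa)\subset S$, i.e.\ $\mathrm{supp}(\nu_t)\subset\bigcup_j\RR_+ v_j(\tilde\sigma(t))$ for a.e.\ $t$. Finally, testing the identity $\dot{\tilde\sigma}^k(t)=\int v\, d\nu^k_t(v)$ against an arbitrary $\varphi\in C_c^\infty((0,1),\RR^d)$,
\[
-\int_0^1 \tilde\sigma^k(t)\cdot\varphi'(t)\, dt = \int_{\RR^d\times[0,1]} v\cdot\varphi(t)\, d(\nu^k_t\otimes\lambda),
\]
allows passage to the limit on the right by $W_1$-convergence (the integrand is Lipschitz on the uniformly bounded support) and on the left by uniform convergence, yielding $\dot{\tilde\sigma}(t)=\int v\, d\nu_t(v)$ for a.e.\ $t$ and completing $(\tilde\sigma,\kappa)\in\Sc$.

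The main obstacle is this last step: the consistency $\dot{\tilde\sigma}(t)=\int v\, d\nu_t(v)$ is a pointwise identity between a Lebesgue derivative and a disintegration, and is not directly preserved by narrow convergence of the Young measures at each $t$. The workaround is to recast it in its distributional form against smooth test functions; this is effective precisely because the uniform compact support provided by $\Sc^{C'}\cap\{l(\sigma)\le R\}$ controls the linear growth in $v$ and makes the relevant integrand Lipschitz, so that $W_1$-convergence is strong enough to conclude.
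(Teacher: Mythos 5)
Your proof is correct and follows essentially the same route as the paper: Markov's inequality on the length together with compactness of the set of $R$-Lipschitz curves times the set of probability measures supported in $\overline{B}(0,C'R)\times[0,1]$ gives tightness, Prokhorov yields the limit $X$, and stability of the two defining conditions of $\Sc$ under the convergence plus Portmanteau gives $X(\Sc)=1$; your passage to the limit in the velocity identity via smooth test functions and $W_1$-convergence against Lipschitz integrands, and in the support condition via Kuratowski convergence of the ray sets, are only minor variants of the paper's arguments (which use approximation of the indicator-type test function $v\mathbf{1}_{B_{C'K}}\otimes\mathbf{1}_{[s,t]}$ and integration of the continuous function $(x,v)\mapsto \mathrm{dist}(v,\bigcup_k\RR_+v_k(x))$, respectively). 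The one slight imprecision is the claim that $K^R\cap\Sc$ is closed while you only test sequences from $K^R\cap\Sc^{C'}\cap\Sct$; since $\Xt_n$ is concentrated on $\Sc^{C'}\cap\Sct$, it suffices (and is exactly what your limit argument shows) that the closure of $K^R\cap\Sc^{C'}\cap\Sct$ is contained in $\Sc$, so the Portmanteau step goes through unchanged.
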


\bpr
For every $K >0$, let us define the following subset of $\Sct^{C'}$ 
\[
\Sct_K = \left \{ (\sigma, \nutl) \in \Sct  : |\dot{\sigma}| \leq K \text{ and supp } \nu_t \leq B_{C' K} \right \}
\]
where $C'$ is the constant given by \ref{pbred}.
Let us show that $\Sct_K$ is relatively compact in $\Sc$. First, the set $\{ \sigma \in W^{1, \infty}([0,1], \Ob) : \sigma \: K\text{-Lipschitz continuous} \}$ is compact in $C([0,1], \Ob)$ thanks to Ascoli's theorem. The set of probability measures with support in $B_{C'K}$ is compact due to the Banach-Alaoglu-Bourbaki theorem. Let a sequence $(\sigma^n, \nu_t^n \otimes \lambda) \in (\Sct_K)^{\NN}$ converging to $(\sigma, \nutl) \in \Sc$, prove that $(\sigma, \nutl) \in \Sct_K$. 

%

\textbf{1.)} $\text{supp } \nu_t \subset \bigcup_{k=1}^N \RR_+ v_k(\sigma(t)). $

First let us notice that the function $\vp : (x,v) \mapsto \text{dist}(v, \bigcup_{k=1}^N \RR_+ v_k(x))$ is continuous on $\RR^d \times \Ob$. 
We then have : 
\begin{align*}
\int_0^1 \left ( \int_{\RR^d} \vp(v,\sigma(t)) \: d \nu_t(v) \right ) dt & = \int_0^1 \left ( \int_{B_{C' K}} \vp(v,\sigma(t)) \: d \nu_t(v) \right ) dt \\
& = \lim_{n \rightarrow +\infty} \int_0^1 \left ( \int_{B_{C' K}} \vp(v,\sigma^n(t)) \: d \nu^n_t(v) \right ) dt \\
& = 0.
\end{align*}
So $\vp(x,v) = 0$ $d \nu_t \otimes dt$-a.e. and the support of $\nu_t$ is in $\bigcup_{k=1}^N \RR_+ v_k(\sigma(t))$ for $t \in [0,1]$.     


\textbf{2.)} $\dot{\sigma}(t) = \int_{\RR^d} v \: d\nu_t(v)$.

By definition, for $n \geq 0$ and $(s,t) \in [0,1]^2$, we have : 
\[
\sigma^n(t) - \sigma^n(s) = \int_s^t \int_{B_{C'K}} v \: d\nu_t^n(v) \otimes \lambda = (v \mathbf{1}_{B_{C'K}} \otimes \mathbf{1}_{[s,t]} ; \nu_t^n \otimes \lambda). 
\]
Obviously, the sequence $\{ \sigma^n(t) - \sigma^n(s) \}_{n \geq 0}$ converges to $\sigma(t) - \sigma(s)$ (since $\sigma^n$ uniformly converges to $\sigma$). For the term in the right-hand side, it is sufficient to take a sequence $\{\Phi_{\ep}\}_{\ep >0}$ in $C_b(\RR^d \times [0,1])^{\NN}$ converging to $(v,t) \mapsto v \mathbf{1}_{B_{C'K}} \otimes \mathbf{1}_{[s,t]}$ in $L^1(\RR^d \times [0,1])$ as $\ep \rightarrow 0^+$. 

Now let us justify the tightness of $(\Xt_n)_n$:
\begin{align*}
\Xt_n \left ( (\Sct_K)^c \right ) & \leq \Xt_n \left ( \left \{ (\sigma, \nutl) \in \Sct \cap \Sc^{C'} : |\dot{\sigma}| > K \right \} \right ) \\
& + \Xt_n \left ( \left \{ (\sigma, \nutl) \in \Sct \cap \Sc^{C'} : \text{supp} (\nu_t) \nsubseteq B_{C' K} \right \} \right ) \\
& \leq 2 \Xt_n \left ( \left \{ (\sigma, \nutl) \in \Sct \cap \Sc^{C'} : |\dot{\sigma}| > K \right \} \right ) \\
& \leq 2 X_n \left ( \left \{ (\sigma, \nutl) \in \Sc : l(\sigma) > K \right \} \right ) \\
& \leq \frac{2}{K} \int_{\Sc} l(\sigma) \: dX_n(\sigma, \nu_t \otimes \lambda) \\
& \leq 2 \frac{M}{K} \rightarrow 0 \text{ as } K \rightarrow + \infty. 
\end{align*}

Due to Prokhorov's theorem we can then assume that passing up to a subsequence, $(\Xt_n)_n$ weakly-$\star$ converges to $X \in \mathcal{M}_+^1(C)$. It remains to show that $X(\Sc) = 1$. For $K > 0$, let us define the closed set
\[
\Sc_K = \left \{ (\sigma, \nutl) \in \Sc  : l(\sigma) \leq K \text{ and supp } \nu_t \subset B_{C' K} \right \}.
\]
It follows from the previous computation, the fact that the measures $\Xt_n$ are concentrated on $\Sct$ and Portmanteau's theorem that
\begin{align*}
1 = \limsup_n  \Xt_n(\Sc) & \leq \limsup_n \Xt_n(\Sc_K) + \limsup_n \Xt_n (\Sc \backslash \Sc_K) \\
& \leq X (\Sc_K) + \frac{M}{K}.
\end{align*}
Letting $K$ tend to $\infty$, we then obtain $X(\Sc) = \sup_K X(\Sc_K) = 1$. 
\epr

\begin{lemme} \label{lem2.9}
Let $(X_n)_n$ be a sequence in $\mathcal{M}_+^1(\Sc)$ that weakly star converges to some $X \in \mathcal{M}_+^1(\Sc)$. If there exists $i \in \mathcal{M}_+(\ObS)$ such that $i^{X_n}$ weakly-$\star$ converges to $i$ in $\mathcal{M}_+(\ObS)$ then we have $i^X \leq i$.  
\end{lemme}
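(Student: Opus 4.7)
The plan is to test the two measures $i$ and $i^X$ against an arbitrary nonnegative continuous test function $\varphi\in C(\ObS,\RR_+)$ and use the l.s.c.\ of $\Lb_\varphi$ given by \ref{lem2.7} together with a Portmanteau-type argument on the Polish space $C$.

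First I would fix $\varphi\in C(\ObS,\RR_+)$. By definition \eqref{iX}, for every $n$ one has
\[
\int_{\ObS}\varphi\,di^{X_n}=\int_{\Sc}\Lb_\varphi(\sigma,\kappa)\,dX_n(\sigma,\kappa),
\]
and the left-hand side converges to $\int_{\ObS}\varphi\,di$ by the assumed weak-$\star$ convergence $i^{X_n}\rightharpoonup i$ (recall that $\ObS$ is compact, so $\varphi\in C_b(\ObS)$). Hence it suffices to show
\[
\liminf_n\int_{\Sc}\Lb_\varphi\,dX_n\geq\int_{\Sc}\Lb_\varphi\,dX=\int_{\ObS}\varphi\,di^X.
\]

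For that, I invoke \ref{lem2.7}: the functional $\Lb_\varphi$ is nonnegative and lower semi-continuous on $\Sc$ (viewed as a subset of the Polish space $C$). The narrow convergence $X_n\rightharpoonup X$ in $\Mc_+^1(C)$ then gives, via the Portmanteau theorem applied to the bounded l.s.c.\ truncations $\Lb_\varphi\wedge K$,
\[
\liminf_n\int_{\Sc}(\Lb_\varphi\wedge K)\,dX_n\geq\int_{\Sc}(\Lb_\varphi\wedge K)\,dX.
\]
Since $\Lb_\varphi\wedge K\leq \Lb_\varphi$, the left-hand side is bounded above by $\liminf_n\int\Lb_\varphi\,dX_n$; letting $K\to\infty$ and using monotone convergence on the right-hand side yields the desired inequality. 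Combining,
\[
\int_{\ObS}\varphi\,di=\lim_n\int_{\Sc}\Lb_\varphi\,dX_n\geq\int_{\Sc}\Lb_\varphi\,dX=\int_{\ObS}\varphi\,di^X.
\]

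Since $\varphi$ is an arbitrary element of $C(\ObS,\RR_+)$, this proves $i^X\leq i$ as nonnegative Radon measures on $\ObS$. The only minor subtlety is that $\Lb_\varphi$ is not bounded above (because $|v|$ is unbounded on $\RR^d$), which is precisely why Portmanteau is applied after truncation followed by monotone convergence; everything else is a direct consequence of the lower semi-continuity established in \ref{lem2.7} and the definitions \eqref{iX}--\eqref{5.2b}.
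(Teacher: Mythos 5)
Your argument is correct and is essentially the proof the paper has in mind: it refers to Lemma 2.9 of \cite{carlier2008optimal}, whose proof is exactly this scheme of testing against $\varphi\in C(\ObS,\RR_+)$, using the lower semi-continuity of $\Lb_\varphi$ (here \ref{lem2.7}) and the Portmanteau liminf inequality for nonnegative l.s.c.\ integrands under weak-$\star$ convergence. Your truncation-plus-monotone-convergence step is the standard way to handle the unboundedness of $\Lb_\varphi$, so nothing essential is missing.
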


The proof is similar to that of Lemma $2.9$ in \cite{carlier2008optimal}.


%

\begin{proof} (of \ref{th2}) 

Let us prove the existence of solutions for the problem \eqref{pc3}. Thanks to \ref{egpb}, we consider the problem \eqref{pcX}. Due to \eqref{qcroi} the value of problem \eqref{pcX} is finite. Let $(X_n)_n$ be a maximizing sequence of \eqref{pcX}. Since $i^X \geq i^{\Xt}$, we can assume $X_n = \Xt_n$ for all $n$. Still from \eqref{qcroi} it follows that $(i^{X_n})_n$ is bounded in $L^q(\theta)$. So, passing up to a subsequence, we can assume that $(i^{X_n})_n$ weakly converges in $L^q(\theta)$ to some $i$. Moreover, since  $(i^{X_n})_n$ is bounded in $L^q(\theta)$ so in $L^1(\theta)$, we have
\begin{align*}
\sup_n \ISc l(\sigma) \: dX_n(\sigma, \nutl) & \leq \sup_n \ISc \left ( \int_0^1 \left ( \int_{\RR^d} |v| d\nu_t(v) \right ) dt \right ) dX_n(\sigma, \nutl) \\
& = \sup_n \IOS d i^{X_n} < + \infty. 
\end{align*}
Since $X_n = \Xt_n$, we can deduce from \ref{lem2.8} that, up to a subsequence, $(X_n)_n$ weakly-$\star$ converges to some $X \in \mathcal{M}_+^1(\Sc)$. Using the fact that $\mathcal{X}(\gamma)$ is weakly closed, we see that $X \in \mathcal{X}(\gamma)$ and \ref{lem2.9} then imply that $i^X \leq i$ so that $X \in \mathcal{X}^q(\gamma)$. Since $G(x, v, \cdot)$ is convex and nondecreasing, we then have 
\begin{align*}
\IOS G(x, v, i^X(x, v) \: \theta(dx, dv) & \leq \IOS G(x, v, i(x, v)) \: \theta(dx, dv) \\
& \leq \liminf_n \IOS G(x, v, i^{X_n}(x, v)) \: \theta(dx, dv),  
\end{align*}
which proves that $X$ solves \eqref{pcX}. Thus as mentioned in the proof of \ref{egpb}, there exists $Q \in \Qc^q(\gamma)$ such that $m^Q = i^X$ and so $Q$ is a solution of \eqref{pc3}. 

The reasoning for the last two statements is similar to that of Theorem $5.1$ in \cite{baillon2012discrete}.
\epr

A natural question is to investigate the discrete problems corresponding to \eqref{P1} i.e.
\begin{equation} \label{pd4}
\inf_{\boldsymbol{m\Ep}, \boldsymbol{w\Ep}} \sum_{(x,e) \in \Eep} |e|^d G \left (x, \frac{e}{|e|}, \frac{m\Ep(x, e)}{|e|^{d/2}} \right )
\end{equation}
subject to the mass conservation conditions \eqref{cons1}-\eqref{cons2} and convergence of problems \eqref{pd4} in some sense to the continuous problem
\begin{equation} \label{pc4}
\inf_{Q \in \mathcal{Q}(\gamma)} \IOS G (x, v, m^Q(x, v)) \: \theta(dx, dv).
\end{equation}

Let $\mathbf{m\Ep} = (m\Ep(x,e))_{(x,e) \in \Eep}$ and $\mathbf{w\Ep} = (w\Ep(\sigma))_{\sigma \in C\Ep}$ solve the discrete problem \eqref{pd4}. Let $\sigma = (x_0, \dots, x_{L(\sigma)})\in C\Ep$ (identified with the piecewise affine curve defined on $[0, L(\sigma)]$). For every $k = 0, \dots, L(\sigma)-1,$ let us denote by $i_k$ the integer such that $(x_k, x_{k+1} - x_k) \in \Eep_{i_k}$. Then let us define $\rho^{\sigma} \in L^{\infty}([0,1])^N$ where for all $t \in [k,  k+1[$, 
\[
\rho_i^{\sigma}(t) = 
\left \{ 
\begin{aligned} & | \sigma(k+1) - \sigma(k) | & \text{ if } i = i_k, \\
& 0  & \text{ otherwise.}  
\end{aligned}
\right.
\]
We will define a discrete measure $Q\Ep$ over $\mathcal{L}\Ep$ where
 \[
\mathcal{L}\Ep = \{ (\sigma, \rho^{\sigma}) : \sigma \in C\Ep\}.
\] 
Write $Q\Ep$ as follows
\[
Q\Ep := \ep^{d/2-1} \sum_{\sigma \in C\Ep} w\Ep (\sigma) \delta_{\sigma \otimes \rho^{\sigma}}
\]
as well as
\[
\Qt\Ep := \ep^{d/2-1} \sum_{\sigma \in C\Ep} w\Ep (\sigma) \delta_{\tilde{\sigma} \otimes \rho^{\tilde{\sigma}}}
\]
where $\tilde{\sigma} \in W^{1,\infty}([0,1], \Ob)$ is the constant speed reparameterization of the path $\sigma$. Notice that for every $\xi \in C(\ObS, \RR_+)$, we have $L_{\xi}(\sigma, \rho^{\sigma}) = L_{\xi}(\tilde{\sigma}, \rho^{\tilde{\sigma}})$ so that $m^{Q\Ep} = m^{\Qt\Ep}$. Let us also observe that the measure $m^{\Qt\Ep}$ contains all the information on $(\mathbf{m\Ep}, \mathbf{w\Ep})$. 

Especially for the following theorem, we make a stronger assumption.
\begin{hyp} \label{hy9}
There exists a function $C : \RR_+ \mapsto \RR_+^*$ such that $C(\ep) \rightarrow 1$ as $\ep \rightarrow 0^+$ and for every $\ep >0, (x,e) \in \Eep, C(\ep) \ep \leq |e| \leq \ep$.
\end{hyp}
In particular, this hypothesis is satisfied in our three classical examples since arc length is constant for $\ep > 0$ fixed.  

\begin{theo}
Under the previous assumptions, defining $\Qt\Ep$ as above, up to a subsequence, $(\Qt\Ep)_{\ep} > 0$ weakly converges to some solution $Q \in \mathcal{Q}^q(\gamma)$ of \eqref{pc4} in the sense that  
\[
\int_{C([0,1], \RR^d) \times L^1([0,1])^N} \Phi (\sigma, \rho) d \Qt\Ep(\sigma, \rho) \rightarrow \int_{C([0,1], \RR^d) \times L^1([0,1])^N} \Phi (\sigma, \rho) d Q(\sigma, \rho),
\]
as $\ep \rightarrow 0^+$ for every $\Phi \in C_b(C([0,1], \RR^d) \times L^1([0,1])^N, \RR)$.
\end{theo}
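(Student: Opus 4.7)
The plan is to mimic the compactness and lower-semicontinuity arguments of the proof of Theorem \ref{th2} at the discrete level, and to link the value of \eqref{pd4} to the value of \eqref{pc4} via Corollary \ref{coro1} and the duality Theorem \ref{th2}. First, writing $v\Ep$ for the infimum of \eqref{pd4}, Fenchel duality between the primal \eqref{P1} and the dual \eqref{D1} gives $v\Ep = - \min_{\xep} J\Ep(\xep)$; Corollary \ref{coro1} delivers $\min J\Ep \to \min J$ and Theorem \ref{th2} rewrites this as $v\Ep \to \min \eqref{pc4}$, so in particular $v\Ep$ is bounded. Using \ref{hy9} and \eqref{cons2} to recast the discrete sum as $\sum_{(x,e)} |e|^d G(x, e/|e|, \mu\Ep(x,e))$ with $\mu\Ep(x,e) := m\Ep(x,e)/|e|^{d/2}$, the lower $q$-growth \eqref{qcroi} of $G$ yields a uniform bound on $\mu\Ep$ in $L^q$ against the discrete measure $\theta\Ep$; a direct computation comparing the definitions shows that, as measures on $\ObS$, one has $\IOS \xi\,m^{\Qt\Ep}\,\theta(dx,dv) = \sum |e|^d \xi(x,e/|e|)\,\mu\Ep(x,e) + o(1)$, so $m^{\Qt\Ep}$ is bounded in $L^q(\theta)$ and consequently $\int l(\sigma)\,d\Qt\Ep \leq \|m^{\Qt\Ep}\|_{L^1(\theta)}$ is bounded too.

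Next, I would transfer the compactness to Young measures. Associate to $\Qt\Ep$ its Young measure $X\Ep \in \Xc^q(\gamma\Ep)$ on the Polish space $\Sc$ through the construction used in the proof of Lemma \ref{egpb}, so that $i^{X\Ep} = m^{\Qt\Ep}$. Since the piecewise-affine paths are reparameterized to constant speed and Assumption \ref{hy9} enforces $\rho^{\sigma}(t) \leq C'|\dot{\sigma}(t)|$, the measures $X\Ep$ are supported in $\Sct \cap \Sc^{C'}$, so Lemma \ref{lem2.8} extracts a subsequence converging weakly-$\star$ to some $X \in \Mc_+^1(\Sc)$. Assumption \ref{hy1} together with \eqref{cons1} gives $(e_0, e_1)_{\#} X\Ep \to \gamma$ weakly-$\star$, whence $X \in \Xc(\gamma)$; the $L^q$-bound combined with Lemma \ref{lem2.9} yields $i^X \leq i$, where $i$ is the weak $L^q$ limit of a subsequence of $i^{X\Ep}$, and this places $X$ in $\Xc^q(\gamma)$. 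The recipe in the proof of Lemma \ref{egpb} then produces $Q \in \Qc^q(\gamma)$ with $m^Q = i^X$.

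For optimality of $Q$, convexity and monotonicity of $G(x,v,\cdot)$ together with the weak-$L^q$ lower semicontinuity of $\zeta \mapsto \int G(x,v,\zeta)\,d\theta$ yield
\[
\IOS G(x,v, m^Q)\,d\theta \leq \IOS G(x,v, i)\,d\theta \leq \liminf_\ep \IOS G(x,v, i^{X\Ep})\,d\theta = \liminf_\ep v\Ep = \min \eqref{pc4},
\]
so $Q$ attains the infimum of \eqref{pc4}. Finally, I would upgrade the Young-measure convergence of $X\Ep$ to the announced convergence of $\Qt\Ep$: each $\Phi \in C_b(C([0,1],\RR^d) \times L^1([0,1])^N)$ pulls back, via the map $\rho \mapsto \nu^{\rho}_t \otimes \lambda$, to a bounded functional on the relevant subset of $\Sc$, and the uniform pointwise bound $\rho^{\sigma}(t) \leq C'|\dot{\sigma}(t)|$ yields the equi-integrability needed for weak-$L^1$ convergence of the $\rho$-component to follow from the narrow (Wasserstein-$1$) convergence of the associated $\nu^{\rho}_t \otimes \lambda$.

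The main obstacle is precisely this last upgrading step: the space $C([0,1],\RR^d) \times L^1([0,1])^N$ is not Polish, Lemma \ref{lem2.8} is stated on $\Sc$, and the recipe $\nu_t \mapsto \rho(t)$ that inverts $\rho \mapsto \nu^{\rho}_t$ is ambiguous whenever two or more directions $v_k(\sigma(t))$ coincide. Making the correspondence continuous uniformly in $\ep$ will require a careful symmetrization over indices with $v_k(\sigma(t)) = v_{k'}(\sigma(t))$, and a quantitative comparison between weak $L^1$ convergence in the $\rho$-variable and narrow convergence of the Young measures, which is precisely the difficulty that motivated the detour through Young measures in Section \ref{sect5}.
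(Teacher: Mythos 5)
Your architecture coincides with the paper's (duality and convergence of values via \ref{coro1} and \ref{th2}, a $q$-bound on the rescaled arc-masses, compactness of $\Qt\Ep$ through the Young-measure machinery of Section \ref{sect5}, identification of the marginals to get $Q \in \Qc^q(\gamma)$, and monotonicity of $G$ to conclude optimality), but the optimality step is carried out with ill-defined objects and this is a genuine gap. The measure $m^{\Qt\Ep}=i^{X\Ep}$ is concentrated on the finitely many arcs swept by the curves of $C\Ep$, hence it is singular with respect to $\theta$ (whose spatial marginal is Lebesgue) for $d\geq 2$; so the claims that $m^{\Qt\Ep}$ is ``bounded in $L^q(\theta)$'', that $X\Ep\in\Xc^q(\gamma\Ep)$, and above all the chain $\IOS G(x,v,i^{X\Ep})\,d\theta \to$ value of \eqref{pd4} make no sense as written, and weak-$L^q$ lower semicontinuity of $\zeta\mapsto\IOS G(x,v,\zeta)\,d\theta$ cannot be applied to $i^{X\Ep}$. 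What the paper does instead is keep the nonlinear functional at the discrete level: the rescaled masses $(x,e)\mapsto m\Ep(x,e)/|e|^{d/2}$ are bounded in the discrete norm $\|\cdot\|_{\ep,q}$, hence (as in the proof of \ref{coro1} and \ref{lem4.1}, with $q$ in place of $p$) they converge in the sense of \ref{def1} to some $m\in L_+^q(\theta)$ together with the discrete-to-continuum inequality \eqref{5.14}, $\IOS G(x,v,m)\,\theta(dx,dv)\leq\liminf_\ep\sum_{(x,e)}|e|^d G(x,e/|e|,m\Ep(x,e)/|e|^{d/2})$; the Riemann-sum computation you sketched for linear test functions $\xi$ (with the modulus of continuity and \ref{hy9}) then shows that $m^{\Qt\Ep}$ converges weakly-$\star$ as a measure to $m\,\theta$, \ref{lem2.9} gives $m^Q\leq m$, and monotonicity of $G(x,v,\cdot)$ plus the convergence of the discrete values closes the argument. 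Your proposal skips precisely the nontrivial inequality \eqref{5.14}, replacing it by an identity between quantities that are not defined.

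Concerning your last paragraph: the difficulty you flag (passing from narrow convergence of the Young measures on $\Sc$ back to convergence of $\Qt\Ep$ against every $\Phi\in C_b(C([0,1],\RR^d)\times L^1([0,1])^N)$, and the ambiguity of the map $\nu_t\mapsto\rho$ when several directions coincide) is exactly what the construction in \ref{egpb} and the symmetrized formula $\rho^{\nu}_k(t)=\frac{1}{\#I_j}\int_{\RR_+v_j}|v|\,d\nu_t(v)$ are designed to handle, and the uniform bound $\sum_k\rho_k\leq C'|\dot\sigma|$ on the support of the limiting measure supplies the equi-integrability you mention; the paper compresses all of this into ``arguing as previously''. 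So that part of your plan is the intended one, but as written you acknowledge you have not completed it, whereas the stated theorem is precisely this convergence; to make the proposal a proof you must carry out that transfer explicitly, in addition to repairing the lower-semicontinuity step described above.
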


\bpr
By duality, from \ref{th2} and \ref{coro1}, it follows that the value of \eqref{pd4} converges to that of \eqref{pc4} and in particular, due to the $q$ growth condition \eqref{qcroi} on $G(x, v, \cdot)$, $\mathbf{m\Ep}$ is bounded for the discrete $L^q$ norm. In the same manner that in the proof of \ref{coro1} and Section $4.1$ we can see that there is some $m \in L_+^q$ such that $(x,e) \rightarrow \frac{m\Ep(x, e)}{|e|^{d/2}}$ weakly converges to $m$ in $L^q$ in the sense of \ref{def1} (up to replacing $p$ by $q$) and 
\beq \label{5.14}
\IOS G(x, v, m(x,v)) \: \theta (dx, dv) \leq \liminf_{\ep \rightarrow 0^+} \sum_{(x, e) \in \Eep} |e|^d G \left ( x, \frac{e}{|e|}, \frac{m\Ep(x,e)}{|e|^{d/2}} \right ). 
\eeq
Let $\xi \in C(\ObS, \RR_+)$, recalling \eqref{cons2}, \eqref{H5} and \eqref{mQ}, rearranging terms, we have
\begin{align*}
& \IOS \xi(x,v) \: d m^{\Qt\Ep}(x,v)  = \int_{\mathcal{L}} L_{\xi}(\sigma, \rho) \: d \Qt\Ep(\sigma, \rho) \\
& = \ep^{d/2-1} \sum_{\sigma \in C\Ep} w\Ep (\sigma) \sum_{k=0}^{L(\sigma)-1} \int_k^{k+1} \xi ( \sigma(t), v_{i_k}(\sigma(t))) |\sigma(k+1)-\sigma(k)| dt \\
& = \ep^{d/2-1} \sum_{\sigma \in C\Ep} w\Ep (\sigma) \sum_{k=0}^{L(\sigma)-1} \int_{[\sigma(k), \sigma(k+1)]} \xi \left ( \cdot, \frac{\sigma(k+1)-\sigma(k)}{|\sigma(k+1)-\sigma(k)|} \right ) + O(w_{\xi}(\ep)) \\
& = \ep^{d/2-1} \sum_{\sigma \in C\Ep} w\Ep (\sigma) \sum_{k=0}^{L(\sigma)-1} \left ( \xi \left ( \sigma(k), \frac{\sigma(k+1)-\sigma(k)}{|\sigma(k+1)-\sigma(k)|} \right ) + O(w_{\xi}(\ep)) \right) |\sigma(k+1)-\sigma(k)| \\
& = \ep^{d/2-1} \sum_{(x, e) \in \Eep} \left ( \xi \left ( x, \frac{e}{|e|} \right ) + O(w_{\xi}(\ep)) \right )  \left ( \sum_{\sigma \in C\Ep : [x, x+e] \subset \sigma} |e| w\Ep (\sigma) \right ) \\
& = \ep^{d/2-1} \sum_{(x, e) \in \Eep} |e|^{d/2+1} \xxe \frac{m\Ep(x, e)}{|e|^{d/2}} + O(w_{\xi}(\ep))
\end{align*}
where $w_{\xi}$ is a modulus of continuity of $\xi$.  From \ref{hy9} and the fact that $(x,e) \rightarrow \frac{m\Ep(x, e)}{|e|^{d/2}}$ weakly converges in $L^q$ to $m$ in the sense of \ref{def1}, it follows that $m^{\Qt\Ep}$ weakly star converges to $m$. Arguing as previously, we find $Q \in \mathcal{M}_1^+(\mathcal{L})$ such that, up to a subsequence, $(\Qt\Ep)_{\ep}$ weakly converges to $Q$ and $m^Q \leq m$. We easily have $Q \in \mathcal{Q}^q(\gamma)$ : indeed, for every $\vp \in C(\RR^d \times \RR^d, \RR)$, we have 
\begin{align*}
\IL \vp(\sigma(0), \sigma(1)) \: dQ(\sigma, \rho) & = \lim_{\ep \rightarrow 0^+} \IL \vp(\sigma(0), \sigma(1)) \: d\tilde{Q}\Ep (\sigma, \rho) \\
& =  \lim_{\ep \rightarrow 0^+} \ep^{d/2-1} \sum_{\sigma \in C\Ep} w\Ep(\sigma) \vp(\sigma(0), \sigma(1)) \\
& =  \lim_{\ep \rightarrow 0^+} \ep^{d/2-1} \sum_{(x,y) \in {N\Ep}^2} \vp(x,y) \left ( \sum_{\sigma \in C_{x,y}\Ep} w\Ep(\sigma) \right ) \\
& =  \lim_{\ep \rightarrow 0^+} \ep^{d/2-1} \sum_{(x,y) \in {N\Ep}^2} \vp(x,y) \gamma\Ep(x,y) \\
& = \IObOb \vp \: d\gamma. 
 \end{align*}
Using \eqref{5.14} and the fact that $G(x, v, \cdot)$ is nondecreasing, we get
\begin{align*}
\IOS G(x, v, m^Q(x, v)) \: \theta(dx, dv) & \leq \IOS G(x, v, m(x, v)) \: \theta(dx, dv) \\
& \leq \liminf_{\ep \rightarrow 0^+} \sum_{(x, e) \in \Eep} |e|^d G \left (x, \frac{e}{|e|}, \frac{m\Ep(x,e)}{|e|^{d/2}} \right ).
\end{align*}
Since the right-hand side is the value of the infimum in \eqref{pc4}, we obtain the desired result.
\epr

\section{The long-term variant}

Instead of taking the transport plan $\gamma\Ep$ as given in the discrete problem, we now consider the case where  only its marginals are fixed. More precisely, there is a distribution of sources $f_-\Ep= \sum_{x \in N\Ep} f_-\Ep(x) \delta_x$ and sinks $f_+\Ep= \sum_{x \in N\Ep} f_+\Ep(x) \delta_x$ which are discrete measures with same total mass on the set of nodes $\Nep$ (that we can assume to be 1 as a normalization)
\[
\sum_{x \in N\Ep} f_-\Ep(x) = \sum_{y \in N\Ep} f_+\Ep(y) = 1.
\]
The numbers $f_-\Ep(x)$ and $f_+\Ep(x)$ are nonnegative for every $x \in \Nep$. 

With the same notations as in the short-term problem, we have almost the same definition of an equilibrium as in \ref{defW}, we must change the mass conservation condition \eqref{cons1} as follows
\beq \label{cons1bis}
f\Ep_-(x) := \sum_{\sigma \in C_{x, \cdot}\Ep} w\Ep(\sigma), \:  f\Ep_+(y) := \sum_{\sigma \in C_{\cdot, y}\Ep} w\Ep(\sigma)
\eeq
for every $(x,y) \in N\Ep \times N\Ep$, where $C_{x,\cdot}\Ep$ (respectively $C_{\cdot,y}\Ep$) is the set of loop-free paths starting at the origin $x$ (respectively stopping at the terminal point $y$). Moreover, the transport plan now is an unknown. Similar arguments apply to this case, the equilibrium is a minimizer of the functional defined by \eqref{P1} but now subject to \eqref{cons1bis} and \eqref{cons2}. We shall then state the analogue of the dual formulation \eqref{D1} 
\begin{equation} \label{D1bis}
 \inf_{t^{\ep} \in \mathbb{R}_+^{\#E^{\ep}}} \left \{ \sum_{(x,e) \in E^{\ep}} H^{\ep} (x, e, t^{\ep} (x,e)) - \inf_{\gamma\Ep \in \Pi(f_-\Ep, f_+\Ep)}\sum_{(x,y) \in {N^{\ep}}^2} \gamma^{\ep}(x,y) T_{t^{\ep}}^{\ep}(x,y) \right \},
 \end{equation}
where $\Pi(f_-\Ep, f_+\Ep)$ is the set of discrete transport plans between $f_-\Ep$ and $f_+\Ep$, that is, the set of nonnegative numbers $(\gamma\Ep(x,y))_{(x,y) \in {N\Ep}^2}$ such that
\[
\sum_{y \in N\Ep} \gamma\Ep(x,y) = f_-\Ep(x), \: \sum_{x \in N\Ep} \gamma\Ep(x,y) = f_+\Ep(y), \: \forall (x,y) \in N\Ep \times N\Ep,
\]
We assume that the hypotheses made in Subsection $3.1$ are still satisfied, except that we replace \ref{hy1} by 
\begin{hyp} \label{hy1bis}
$f_-\Ep$ and $f_+\Ep$ weakly star converge to some probability measures $f_-$ and $f_+$ on $\Ob$:
\[
\lim_{\ep \rightarrow 0^+} \ep^{d/2-1}\sum_{x \in N\Ep} ( \vp(x) f_-\Ep(x) + \psi(x) f_+\Ep(x)) = \IOb \vp df_- + \IOb \psi df_+, \: \forall (\vp, \psi) \in C(\Ob)^2. 
\]
\end{hyp}
Writing $\xep$ as in \eqref{xep}, we can now reformulate \eqref{D1bis}  
\beq  \label{pd1bis}
 \inf_{\xep \in \mathbb{R}_+^{\#E^{\ep}}} F^{\ep}(\xep) := I_0^{\ep}(\xep) - F_1^{\ep}(\xep)
\eeq
where $I_0^{\ep}(\xep)$ is defined by \eqref{pd1.1} and 
\begin{equation} \label{pd1.2b}
F_1^{\ep}(\xep) := \inf_{\gamma\Ep \in \Pi(f_-\Ep, f_+\Ep)}\sum_{(x,y) \in {N^{\ep}}^2} \gamma^{\ep}(x,y) \left ( \min_{\sigma \in C_{x,y}^{\ep}} \sum_{(z,e) \subset \sigma} |e|^{d/2} \xep(z,e) \right ) .
\end{equation} 
It is an optimal transport problem. The limit functional then reads as the following variant of \eqref{pc1}
\beq \label{plb}
F(\xi) := I_0(\xi) - F_1(\xi), \text{ where } F_1(\xi) := \inf_{\gamma \in \Pi(f_-, f_+)} \IObOb \cbx d\gamma, \: \forall \xi \in L_+^p,
\eeq
As previously, $I_0$ is defined by \eqref{I0} and $\cbx$ by \eqref{cbx}. $\Pi(f_-, f_+)$ is the set of transport plans between $f_-$ and $f_+$ (see \eqref{defplantransp}). We then have the following $\Gamma$-convergence result :
\begin{theo}
Under the same assumptions except \ref{hy1} replaced by \ref{hy1bis}, the family of functionals $F\Ep$ defined by \eqref{pd1bis} $\Gamma$-converges (for the weak $L^p$-topology) to the functional $F$ defined by \eqref{plb}.
\end{theo}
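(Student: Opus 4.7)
The plan is to follow the proof of \ref{th1}, adjusting only for the fact that $F_1\Ep$ and $F_1$ involve an extra infimum over transport plans with prescribed marginals. Since $F\Ep = I_0\Ep - F_1\Ep$ and $F = I_0 - F_1$ and \ref{lem4.1} already furnishes $\liminf I_0\Ep(\xep) \geq I_0(\xi)$ whenever $\xep \rightarrow \xi$ weakly in $L^p$, both the $\Gamma$-liminf and the $\Gamma$-limsup inequalities reduce to an analysis of the nonlocal term.

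For the $\Gamma$-liminf, start from $\xep \rightarrow \xi$ weakly. As in Subsection \ref{subs4.1}, extend $c\Ep(x,y) := \min_{\sigma \in C_{x,y}\Ep} \sum_{(z,e) \subset \sigma} |e| \xep(z,e)$ to $\ObOb$ via \ref{dismor}; up to a subsequence, $c\Ep \rightarrow c$ uniformly on $\ObOb$, and \ref{lem4} yields $c \leq \cbx$. Given any $\gamma \in \Pi(f_-, f_+)$, construct discrete plans $\gamma\Ep \in \Pi(f_-\Ep, f_+\Ep)$ such that $\ep^{d/2-1}\sum_{(x,y)} \gamma\Ep(x,y) \delta_{(x,y)}$ weak-$\star$ converges to $\gamma$---a standard block-gluing procedure using only the convergence of marginals from \ref{hy1bis}. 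Since $F_1\Ep(\xep) \leq \sum \gamma\Ep(x,y) c\Ep(x,y)$, the uniform convergence $c\Ep \rightarrow c$ and the weak-$\star$ convergence of the rescaled plans give
\[
\limsup_{\ep \rightarrow 0^+} F_1\Ep(\xep) \leq \IObOb c \, d\gamma \leq \IObOb \cbx \, d\gamma.
\]
Taking the infimum over $\gamma \in \Pi(f_-, f_+)$ produces $\limsup F_1\Ep(\xep) \leq F_1(\xi)$, which combined with the bound on $I_0\Ep$ yields $\liminf F\Ep(\xep) \geq F(\xi)$.

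For the $\Gamma$-limsup, given $\xi \in L_+^p(\theta)$, use the same recovery sequence built in Subsection $4.2$---first $\xep(x,e) = \xi(x, e/|e|)$ when $\xi$ is continuous, then a diagonal extraction for general $\xi$ via \ref{convcxin}. This guarantees $\xep \rightarrow \xi$ weakly and $I_0\Ep(\xep) \rightarrow I_0(\xi)$. For the transport term, pick an optimizer $\gamma\Ep \in \Pi(f_-\Ep, f_+\Ep)$ in the definition of $F_1\Ep(\xep)$. Its rescaled version $\ep^{d/2-1}\gamma\Ep$ has bounded total mass by \ref{hy1bis}, so Prokhorov extracts a subsequence converging weak-$\star$ to a measure on $\ObOb$ whose marginals are $f_-$ and $f_+$, hence an element $\gamma^\star \in \Pi(f_-, f_+)$. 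As in Step $1$ of the short-term $\Gamma$-limsup, the construction of $\xep$ forces $c\Ep \geq c_\xi$ at nodes; passing to the uniform limit and invoking \ref{lem1} gives $c \geq \cbx$. Therefore
\[
\liminf_{\ep \rightarrow 0^+} F_1\Ep(\xep) = \liminf_{\ep \rightarrow 0^+} \sum \gamma\Ep(x,y) c\Ep(x,y) \geq \IObOb c \, d\gamma^\star \geq \IObOb \cbx \, d\gamma^\star \geq F_1(\xi),
\]
and subtracting from the $I_0\Ep$ convergence yields $\limsup F\Ep(\xep) \leq F(\xi)$.

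The main obstacle is the stability of the optimal transport problem under simultaneous perturbation of the marginals and of the cost: concretely, constructing recovery plans $\gamma\Ep \in \Pi(f_-\Ep, f_+\Ep)$ from a prescribed $\gamma \in \Pi(f_-, f_+)$ on the liminf side, and checking that the narrow limit of the discrete optimizers indeed lies in $\Pi(f_-, f_+)$ on the limsup side. Both rest on careful but classical Kantorovich-type stability combined with \ref{hy1bis}; once granted, the remainder of the argument is mechanical and copies verbatim the short-term machinery (the discrete Morrey inequality \ref{dismor}, the characterization of $\cbx$ via \ref{convcxin} and \ref{lem1}, and \ref{lem4}).
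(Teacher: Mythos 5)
Your proposal is correct and follows essentially the route the paper intends: the paper simply invokes the argument of Theorem~6.1 of \cite{baillon2012discrete}, which is exactly this adaptation of the short-term proof, reusing \ref{lem4.1}, \ref{dismor}, \ref{lem4}, \ref{convcxin} and \ref{lem1}, and treating the extra infimum over plans by approximating a fixed $\gamma\in\Pi(f_-,f_+)$ with discrete plans in $\Pi(f_-\Ep,f_+\Ep)$ for the liminf and by extracting a weak-$\star$ limit of rescaled discrete optimal plans (whose marginals are identified via \ref{hy1bis}) for the limsup. The ``block-gluing'' construction you flag is indeed the standard block approximation of transport plans and is the only genuinely new ingredient relative to the short-term case.
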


Same arguments as for Theorem $6.1$ in \cite{baillon2012discrete} apply here. 


In the same manner as in \ref{sect5}, we can see that the problem \eqref{plb} has a dual formulation that is
\begin{equation} \label{pdb}
\sup_{Q \in \mathcal{Q}^q(f_-, f_+)} - \IOS G (x, v, m^Q(x, v)) \theta(dx, dv),
\end{equation}
where 
\begin{align*}
\mathcal{Q}^q(f_-, f_+) & := \{ Q \in \mathcal{M}_1^+(\Lc) : {e_0}_{\#} Q = f_-, {e_1}_{\#}Q = f_+, m^Q \in L^q(\theta) \} \\
& = \bigcup_{\gamma \in \Pi(f_-, f_+)} \mathcal{Q}^q(\gamma).
\end{align*}
If we assume that $\mathcal{Q}^q(f_-, f_+) \neq \emptyset$ and that \eqref{qcroi} is still true, one can reformulate \ref{th2} for the long-term models as follows :
\begin{theo} \label{th2b}
\begin{enumerate}
We have : 
\item Problem \eqref{pdb} admits solutions,
\item $\overline{Q} \in \mathcal{Q}^q(f_-, f_+)$ solves \eqref{pdb} if and only if 
\[
\int_{\mathcal{L}} L_{\xi_{\overline{Q}}}(\sigma, \rho) \: d \overline{Q} (\sigma, \rho) = \int_{\mathcal{L}} \overline{c}_{\xi_{\overline{Q}}}(\sigma(0), \sigma(1) ) \: d \overline{Q} (\sigma, \rho)
\]
where $\xi_{\overline{Q}}(x, v) = g(x,v, m^{\overline{Q}}(x,v)) $ and moreover, $\overline{\gamma} := (e_0, e_1)_{\#}\overline{Q}$ is a solution of the optimal transport problem: 
\[
\inf_{\gamma \in \Pi(f_-, f_+)} \IObOb \overline{c}_{\xi_{\overline{Q}}}(x,y) d\gamma(x, y). 
\]
\item There is no duality gap : the infimum of \eqref{plb} equals the supremum of \eqref{pdb} and moreover, if $\overline{Q}$ solves \eqref{pdb} then $\xi_{\overline{Q}}$ solves \eqref{plb}.
\end{enumerate}
\end{theo}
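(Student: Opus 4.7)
The plan is to mirror the proof of \ref{th2}, the essential change being that the transport plan is now an unknown, so one works on the larger set $\Qc^q(f_-, f_+) = \bigcup_{\gamma \in \Pi(f_-, f_+)} \Qc^q(\gamma)$ and must track the $(e_0, e_1)$-marginal throughout.

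For part (1), I would first extend \ref{egpb} to the long-term case by defining $\Xc^q(f_-, f_+) := \{X \in \Mc_+^1(\Sc) : {e_0}_{\#}X = f_-, {e_1}_{\#}X = f_+, i^X \in L^q(\theta)\}$ and showing that \eqref{pdb} has the same value as its Young-measure counterpart over $\Xc^q(f_-, f_+)$, via the constructions $Q \mapsto X^Q$ and $X \mapsto Q^X$ from \ref{egpb}. For a maximizing sequence $(X_n)_n$, the $q$-growth \eqref{qcroi} bounds $(i^{X_n})$ in $L^q(\theta)$, so up to a subsequence $i^{X_n}$ weakly converges to some $i$ in $L^q(\theta)$; the resulting $L^1$ bound controls $\ISc l(\sigma) dX_n(\sigma, \nutl)$, so \ref{lem2.8} applied to the reparameterizations $\Xt_n$ (which share the marginals and $i^{X_n}$ with $X_n$) yields a further subsequence weakly-$\star$ converging to some $X \in \Mc_+^1(\Sc)$. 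Continuity of $e_0, e_1$ passes the marginal constraints to the limit, \ref{lem2.9} yields $i^X \leq i$, and convexity and monotonicity of $G(x, v, \cdot)$ force $X$ to be optimal; the construction in \ref{egpb} then produces $\Qb \in \Qc^q(f_-, f_+)$ with $m^{\Qb} = i^X$ solving \eqref{pdb}.

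For parts (2) and (3), I would chain Young's pointwise inequality with the identities \eqref{5.6} and \eqref{5.6b}: for every $\xi \in L_+^p(\theta)$ and $Q \in \Qc^q(f_-, f_+)$ with $\gamma_Q := (e_0, e_1)_{\#} Q \in \Pi(f_-, f_+)$,
\begin{align*}
I_0(\xi) + \IOS G(x, v, m^Q) \theta(dx, dv) &\geq \IOS \xi \, m^Q \, \theta(dx, dv) = \IL \Lxsp dQ \\
&\geq \IL \cbx(\sigma(0), \sigma(1)) dQ = \IObOb \cbx d\gamma_Q \geq F_1(\xi),
\end{align*}
so that $F(\xi) \geq -\IOS G(x, v, m^Q) \theta(dx, dv)$; taking the infimum in $\xi$ and the supremum in $Q$ gives the weak duality $\inf F \geq \sup_{Q \in \Qc^q(f_-, f_+)} -\IOS G(x, v, m^Q) \theta(dx, dv)$. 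Now let $\Qb$ solve \eqref{pdb} and set $\xi_{\Qb}(x, v) := g(x, v, m^{\Qb}(x, v))$; by definition of the Legendre transform, the first inequality of the chain is a pointwise equality. If moreover \eqref{eqQ} holds, then \eqref{5.6b} is $\Qb$-a.e. saturated and $\overline{\gamma} := (e_0, e_1)_{\#}\Qb$ attains the infimum in $F_1(\xi_{\Qb})$, so the whole chain collapses to equalities and $F(\xi_{\Qb}) = -\IOS G(x, v, m^{\Qb}) \theta(dx, dv)$; combined with weak duality this forces no duality gap, optimality of $\xi_{\Qb}$ in \eqref{plb}, and optimality of $\overline{\gamma}$ for the transport problem against $\overline{c}_{\xi_{\Qb}}$. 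Conversely, for $\Qb$ solving \eqref{pdb}, the reverse implication follows from a perturbation argument: any strict slack in \eqref{5.6b} would let one disintegrate $\Qb$ against $\overline{\gamma}$ and replace pieces of generalized curves by near-optimal ones from \ref{convcxin}, producing a competitor $Q' \in \Qc^q(f_-, f_+)$ with strictly smaller $G$-cost, contradicting optimality of $\Qb$.

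The main obstacle I anticipate is this perturbation step, closely modeled on Theorem 5.1 of \cite{baillon2012discrete}: it requires measurable selection of near-optimal generalized curves for $\overline{c}_{\xi_{\Qb}}$ and uniform $L^q$-control on the replacement, and is the only place where the long-term character of the problem (the freedom in $\gamma$) plays a substantive role beyond the short-term proof.
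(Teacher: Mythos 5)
Your part (1) is sound and is exactly the paper's route: the existence proof of \ref{th2} (relaxation to Young measures, \ref{egpb}, \ref{lem2.8}, \ref{lem2.9}) goes through once the single constraint $(e_0,e_1)_{\#}X=\gamma$ is replaced by the two marginal constraints, which are stable under weak-$\star$ convergence. The weak-duality chain for (2)--(3) is also the right starting point. The gaps are in how you close that chain. First, you assert that \eqref{eqQ} alone forces $\overline{\gamma}=(e_0,e_1)_{\#}\Qb$ to minimize $\gamma\mapsto\IObOb\overline{c}_{\xi_{\Qb}}\,d\gamma$ over $\Pi(f_-,f_+)$. This does not follow and is false in general: \eqref{eqQ} only says that $\Qb$-a.e.\ generalized curve is $\overline{c}_{\xi_{\Qb}}$-optimal between its own endpoints, and says nothing about the pairing of sources with sinks. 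Indeed, a solution of the short-term problem \eqref{pc3} for a fixed but suboptimal plan $\gamma\in\Pi(f_-,f_+)$ satisfies \eqref{eqQ} (by \ref{th2}) yet in general does not solve \eqref{pdb}. The correct characterization in the long-term case --- and what the ``if'' direction must take as hypothesis --- is the conjunction of \eqref{eqQ} and the optimality of $\overline{\gamma}$ for the cost $\overline{c}_{\xi_{\Qb}}$; this extra condition is precisely the new feature of \ref{th2b} compared with \ref{th2}, and with both hypotheses your chain does collapse and yields (3) for that $\Qb$.

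Second, your converse (``perturbation'') argument is not the right mechanism. Wholesale replacement of non-geodesic pieces of $\Qb$ by near-optimal curves changes $m^{Q}$ globally, and since $Q\mapsto\IOS G(x,v,m^{Q}(x,v))\,\theta(dx,dv)$ is convex but not linear in $Q$, the rerouted competitor creates its own congestion and need not have smaller cost, so no strict-improvement contradiction is obtained this way. The argument behind Theorem 5.1 of \cite{baillon2012discrete} instead exploits the linearity of $Q\mapsto m^{Q}$: for a competitor $Q'\in\Qc^q(f_-,f_+)$ set $Q_t=(1-t)\Qb+tQ'$, so that $m^{Q_t}=(1-t)m^{\Qb}+t\,m^{Q'}$; differentiating the cost at $t=0^+$ (using \eqref{qcroi} and \eqref{5.6}) and using the optimality of $\Qb$ gives $\IL L_{\xi_{\Qb}}\,d\Qb\le\IL L_{\xi_{\Qb}}\,dQ'$ for every competitor. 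Choosing $Q'$ carried by nearly $\overline{c}_{\xi_{\Qb}}$-optimal generalized curves lying above an optimal plan $\gamma^*$ for $\overline{c}_{\xi_{\Qb}}$, with $m^{Q'}\in L^q(\theta)$ (this is where the measurable selection and $L^q$-control you mention enter, as in the construction showing $\Qc^q(\gamma)\neq\emptyset$), yields $\IL L_{\xi_{\Qb}}\,d\Qb\le\inf_{\gamma\in\Pi(f_-,f_+)}\IObOb\overline{c}_{\xi_{\Qb}}\,d\gamma$, which combined with \eqref{5.6b} gives simultaneously \eqref{eqQ}, the optimality of $\overline{\gamma}$, and then the absence of a duality gap and the optimality of $\xi_{\Qb}$ through your chain. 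As written, your proposal establishes neither the necessity direction of (2) nor (3), since both rest on the two steps above.
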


Problem \eqref{pdb} is studied in \cite{hatchi2015wardrop}. It is showed that problem \eqref{pdb} is equivalent to another problem that is the variational formulation of an anisotropic, degenerate and elliptic PDE :
\[
\left \{
\begin{aligned}
 -\text{ div } (\nabla  \mathcal{G}^*(x, \nabla u(x))) & = f \: & \text{ in } \Omega, \\
   \nabla  \mathcal{G}^*(x, \nabla u(x)) \cdot \nu_{\Omega} & = 0 \: & \text{ on } \partial \Omega, 
\end{aligned} \right.
\]
with $\Gc^*$ being a $C^1$ function. In particular, if the function $g$ in \eqref{H2} is of the form $g(x, v_k(x), m) = a_k(x) m^{q-1} + \delta_k$ for every $x \in \Ob, k=1, \dots, N$ and $m \geq 0$ where the constants $\delta_k$ are positive and the weights $a_k$ are regular and positive, then we have
\[
 \mathcal{G}^*(x, z) = \sum_{k=1}^N \frac{b_k(x)}{p} (z \cdot v_k(x) - \delta_k c_k(x) )_+^p \text{ for every } x \in \Ob, z \in \RR^d
\]
where $b_k = (a_k c_k)^{-\frac{1}{q-1}}$. This case is interesting since numerical simulations can be performed as shown in \cite{hatchi2015wardrop}. 

\textbf{Acknowledgements} The author would like to thank Guillaume Carlier for his extensive help and advice.

\bibliographystyle{plain}
\bibliography{generalref}

 
 \end{document}